\documentclass[11pt, twoside]{article}
\usepackage{textcomp}
\usepackage[utf8]{inputenc}
\usepackage[english]{babel} 
\usepackage[T1]{fontenc} 
\usepackage[utf8]{inputenc} 
\usepackage{soul}
\usepackage[normalem]{ulem}
\usepackage{fancybox}
\usepackage{fancyvrb}
\usepackage{listings}
\usepackage{moreverb}
\usepackage{url}
\usepackage{graphicx}
\graphicspath{ {./images/} }
\usepackage{textcomp}
\usepackage{lmodern}
\usepackage{eurosym}
\usepackage[cyr]{aeguill}
\usepackage{amsmath}
\usepackage{wrapfig} 
\usepackage{makeidx} 
\usepackage{picinpar} 
\usepackage[final]{pdfpages} 
\usepackage{array,multirow,tabularx}
\usepackage{amssymb}
\usepackage[a4paper]{geometry}
\usepackage{amsthm}
\usepackage{tikz}
\usepackage{fancyhdr}
\usepackage{colortbl}
\usepackage{color}
\usepackage{setspace}
\usepackage{longtable}
\usepackage{hhline}
\usepackage{arydshln}
\usepackage{makecell}
\usepackage{mathtools}
\usepackage{tkz-tab}
\usepackage{tikz-cd}
\usepackage{multicol}
\usepackage{enumerate} 
\usepackage{hyperref}
\tikzset{node distance=2.0cm, auto}
\usepackage{fancyhdr}
\renewcommand{\sectionmark}[1]{}
\renewcommand{\subsectionmark}[1]{}

\usepackage[section]{placeins}

\DeclareMathOperator{\ad}{ad}

\allowdisplaybreaks

\title{Rota Baxter operators }
\author{Jon Beristain\footnote{Université de Haute-Alsace, IRIMAS UR 7499, F-68100 Mulhouse, France. \\ E-mail: \texttt{jon.beristain@uha.fr}. }  \; and  Abdenacer Makhlouf\footnote{Université de Haute-Alsace, IRIMAS UR 7499, F-68100 Mulhouse, France.\\ E-mail:  \texttt{abdenacer.makhlouf@uha.fr}.} }
\title{
Restricted (Relative) Rota-Baxter operators on restricted Lie algebras and restricted Lie triple systems and related structures}
\date{\today}

\begin{document}
	
	\newtheorem{thm}{Theorem}[section]
	\newtheorem{prop}[thm]{Proposition}
	\newtheorem{lem}[thm]{Lemma}
	\newtheorem{cor}[thm]{Corollary}
	\theoremstyle{definition}
	\newtheorem{defi}[thm]{Definition}
	\newtheorem*{rmq}{Remark}
	\newtheorem{ex}[thm]{Example}
	\maketitle
	
	\begin{abstract}
		The main purpose of this paper is to define and study  the notion of  restricted Rota-Baxter operator on  restricted Lie algebras and restricted Lie triple systems. We provide the relevant properties and the  usual connections with pre-Lie structures. We show that the operad of pre-Lie triple systems
is the splitting of the operad of Lie triple systems. Moreover, we prove that
Jacobson’s identities hold in pre-Lie triple systems in positive characteristic and then introduce
the notion of restricted pre-Lie triple system.
	\end{abstract}
	
	\noindent\textbf{Keywords:} Restricted Rota-Baxter operator, restricted Lie algebra, restricted Lie triple system, restricted pre-Lie triple system, operad. \\
	\noindent\textbf{MSC 2020 classification:} 		17A40, 17B50,	17B38.  
	\tableofcontents
     \markboth{J. Beristain and A. Makhlouf}{On Rota-Baxter operators in positive characteristic}
     \markright{On Rota-Baxter operators in positive characteristic}
	
\section{Introduction}
Rota-Baxter operators, introduced by G. Baxter in probability theory (\cite{B60}), generalize integration by parts and later reappeared in renormalization theory. They have since found applications in Combinatorics, number theory and operads, see \cite{Rota1995,Guo2000}.

Lie triple systems were introduced by Jacobson in order to study the representations of the Jordan algebras (\cite{J51}). Lie triple systems are related to symmetric spaces in the same way that Lie algebras are related to Lie groups; they are tangent spaces of symmetric spaces. Moreover, Jacobson showed that the Lie triple systems are exactly the subspace of the Lie algebra closed to $[[\cdot,\cdot],\cdot] $. Indeed, a subspace $S$ of a Lie algebra $L$ such that $[[x,y],z] $ lies in $S$ for all $x,y,z$ in $S$ is a Lie triple system for this operation. Moreover, it was shown in \cite{J51} that we can always embedded a Lie triple system $T$ into a Lie algebra $\mathcal{G} $ in such a way that $T$ is a subspace of $\mathcal{G} $ closed under the bracket $[[\cdot,\cdot],\cdot] $. Lie triple systems were strongly studied by Lister in \cite{L51}.

Pre-Lie algebras first appeared in the work of Cayley when he dealt with rooted trees algebras (\cite{C57}). They also appeared in the work of Gerstenhaber on Hochschild cohomology and deformations of algebras (\cite{G63}).

In Lie algebras, Rota-Baxter operators arose independently as operator forms of the classical Yang-Baxter equation (CYBE). The CYBE, central in integrable systems and quantum groups, admits an operator formulation equivalent to the Rota-Baxter relation of weight zero (\cite{STS83}). Rota-Baxter operators are also related to the splitting of algebraic structures, as well as to pre-Lie structures, such as pre-Lie algebras and pre-Lie triple systems, defined by Mabrouk in \cite{M21}. Pre-Lie triple systems play the same role for Lie triple systems as pre-Lie algebras do for Lie algebras. Kupershmidt further generalized these operators into $\mathcal{O}$-operators, also known as relative Rota-Baxter operators. 

In positive characteristic $p>0$, an additional structure arises naturally on certain Lie algebras. This was noticed by Jacobson, who observed that in an associative algebra $A$ over a base field of characteristic $p>0$, for all derivations $D$, $D^p$ is again a derivation (\cite{J37}). Moreover, the Frobenius map $x\in A\mapsto x^p$ is linked with the Lie bracket induced by the Jacobson identities. This led Jacobson to define an abstract $p$-map and the notion of restricted Lie algebras (\cite{J41}). Later, Hodge generalized this notion to Lie triple systems and defined restricted Lie triple systems in \cite{H01}.
In positive characteristic, Dzhumadil'daev showed that Jacobson's identities hold in right-symmetric algebras and thus defined the notion of restricted pre-Lie algebras (\cite{D01}). Later, Dokas gave a more general definition of restricted pre-Lie algebras (\cite{D12}). Ehret and Gilliers extended the results to a larger class. They showed that Jacobson's identities hold for post-Lie algebras and defined the notion of trivially restricted post-Lie algebras (\cite{EG25}).

The aim of this paper is to introduce and study  the notion of restricted Rota-Baxter operators on restricted Lie algebras and restricted Lie triple systems, and to generalize their usual connection with pre-Lie structures. We show that the operad of pre-Lie triple systems is the splitting of the operad of Lie triple systems considered here. Moreover, we prove that Jacobson's identities hold in pre-Lie triple systems in positive characteristic and then introduce the notion of restricted pre-Lie triple systems. Throughout this paper, the characteristic of the base field is assumed to be different from two. 

This paper is organized as follows. In section \eqref{s2}, we recall the definition of Rota-Baxter operators on Lie algebras and their connection with pre-Lie algebras. In section \eqref{s3}, we define the notion of restricted Rota-Baxter operators on restricted Lie algebras in \hyperref[d3.6]{Definition~\ref*{d3.6}} and show that similar relations hold with restricted pre-Lie algebras (\hyperref[p3.13]{Proposition~\ref*{p3.13}}). Section \eqref{s4} is devoted to Rota-Baxter operators on Lie triple systems. We recall the relevant definitions and their connection with pre-Lie triple systems. In subsection \eqref{sub4}, we justify the definition of pre-Lie triple systems using operad theory. We first recall some facts about operads, then define the operad of Lie triple systems and finally apply the method developed in \cite{PBG13} to split this operad. In section \eqref{s5}, we consider restricted Lie triple systems. We introduce the notion of restricted Rota-Baxter operators on restricted Lie triple systems (\hyperref[d5.6]{Definition~\ref*{d5.6}}). We prove that Jacobson's identities hold in pre-Lie triple systems in positive characteristic (\hyperref[T5.9]{Theorem~\ref*{T5.9}}), which allows us to define restricted pre-Lie triple systems (\hyperref[D5.14]{Definition~\ref*{D5.14}}) and to establish results analogous to those in the binary case (\hyperref[p5.20]{Proposition~\ref*{p5.20}}). Finally, in section \eqref{s6}, we extend these results to the setting of $\mathcal{O} $-operators.
\section{Rota-Baxter operators on Lie algebras}\label{s2}

In this section, we recall the definition of Rota-Baxter operators and some relevant results in this topic.

\subsection{Definitions}

\begin{defi}
    Let $L$ be a Lie algebra. A Rota-Baxter operator of weight $0$ on $L$ is a linear map $P:L\to L$ such that $$[Px,Py]=P([Px,y] +[x,Py]) $$ for all $x,y\in L$. A Lie algebra $L$ equipped with a Rota-Baxter operator $P$ is denoted by $(L,P)$, and called a Rota-Baxter Lie algebra.
\end{defi}

    An important characterization of Rota-Baxter operators is given through graphs. We have the following  equivalent definition:  a linear map $P:L\to L$ is a Rota-Baxter operator on $L$, if and only if, its graph $$Gr(P)=\left\{ (Px,x), x\in L\right\}\subset L\oplus L $$ is a Lie subalgebra of the semi-direct product $L\oplus L$ of $L$ with the $L$-module given by the adjoint representation.

In the sequel, we are going to provide  a restricted analog of  Rota-Baxter operator and this equivalence; see \hyperref[p3.10]{Proposition~\ref*{p3.10}}.

\subsection{Connection with pre-Lie algebras}

\begin{defi}
    A pre-Lie algebra is a pair $(A,\circ) $, where $A$ is a vector space equipped with a binary operation $\circ : A\times A\to A $ such that the following identity holds: \begin{equation}
        x\circ (y\circ z)- (x\circ y)\circ z= y\circ(x\circ z) - (y\circ x)\circ z ,  \quad  \forall x,y,z\in A.
    \end{equation}
        
 Setting  $a(x,y,z)= x\circ (y\circ z) - (x\circ y)\circ z $, the associator,  then the above identity can be written as \begin{equation}\label{eq 1}
        a(x,y,z)=a(y,x,z) .
    \end{equation} For this reason, pre-Lie algebras are also called left-symmetric algebras. 
\end{defi}

\begin{ex}
    \begin{enumerate}
        \item An associative algebra is a pre-Lie algebra.
        \item Pre-Lie algebras appeared in Gerstenhaber's construction of the Hochschild cohomology of associative algebras, see (\cite{G63}). 
    \end{enumerate}
\end{ex}

Pre-Lie algebras form an important class of  Lie admissible algebras.

\begin{prop}
    Let $(A,\circ) $ be a pre-Lie algebra. Then $A$ is endowed with a Lie algebra structure, where the  bracket is defined by $[x,y]:= x\circ y - y\circ x $, for all $x,y\in A$. Moreover, the left multiplication $L:x\mapsto (y\mapsto x\circ y) $ defines a representation of the induced Lie algebra.
\end{prop}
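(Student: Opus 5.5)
The plan is to verify the two claims directly from the defining identity \eqref{eq 1}, $a(x,y,z)=a(y,x,z)$.

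For the first claim, the bracket $[x,y]=x\circ y-y\circ x$ is clearly bilinear and skew-symmetric (indeed $[x,x]=0$), so only the Jacobi identity needs checking. I will expand the cyclic sum $[[x,y],z]+[[y,z],x]+[[z,x],y]$ into the twelve terms of the form $(u\circ v)\circ w$ and $u\circ(v\circ w)$, where $(u,v,w)$ runs over the permutations of $(x,y,z)$. These twelve terms regroup into signed associators, giving
\begin{equation*}
a(x,y,z)-a(y,x,z)+a(y,z,x)-a(z,y,x)+a(z,x,y)-a(x,z,y).
\end{equation*}
By \eqref{eq 1}, each of the three pairs cancels, so the Jacobi identity holds. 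The main (though still routine) point is this regrouping: each $u\circ(v\circ w)$ must be matched with the $(u\circ v)\circ w$ carrying the opposite sign. I will do the check explicitly, term by term, with the signs coming from $[[x,y],z]=(x\circ y)\circ z-(y\circ x)\circ z-z\circ(x\circ y)+z\circ(y\circ x)$ and its cyclic shifts.

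For the second claim, write $L_x$ for left multiplication by $x$. I need $L_{[x,y]}=[L_x,L_y]=L_xL_y-L_yL_x$ as endomorphisms of $A$. Applying both sides to $z$, the required identity is
\begin{equation*}
(x\circ y)\circ z-(y\circ x)\circ z=x\circ(y\circ z)-y\circ(x\circ z).
\end{equation*}
This is exactly the pre-Lie identity rearranged, equivalently $a(x,y,z)=a(y,x,z)$. Since $L$ is linear in $x$, it is therefore a Lie algebra homomorphism $A\to\mathfrak{gl}(A)$, that is, a representation of the induced Lie algebra.
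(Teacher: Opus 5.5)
The paper gives no proof of this proposition (it is recalled as a standard fact), and your direct verification is correct: for the Jacobi identity you expand the cyclic sum into associators and cancel in pairs by left-symmetry, and for the representation you observe that $L_{[x,y]}=[L_x,L_y]$ is the pre-Lie identity rearranged.

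One harmless sign slip: with the convention $a(x,y,z)=x\circ(y\circ z)-(x\circ y)\circ z$, the cyclic sum equals the \emph{negative} of your displayed combination. The term $(x\circ y)\circ z$ enters $[[x,y],z]$ with a plus sign but enters $a(x,y,z)$ with a minus sign. This does not affect the conclusion, since each pair $a(u,v,w)-a(v,u,w)$ vanishes anyway.
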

Conversely, one may obtain a pre-Lie algebra starting from a Rota-Baxter Lie algebra.
\begin{prop}
    Let $(L,P)$ be a Rota-Baxter Lie algebra. Then, the product $x\circ y = [Px,y] $ provides a pre-Lie algebra structure on $L$. 
\end{prop}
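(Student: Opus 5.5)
We verify the left-symmetry identity \eqref{eq 1} directly for the product $x\circ y=[Px,y]$. Fix $x,y,z\in L$ and expand the associator in terms of the bracket:
\begin{align*}
a(x,y,z)&=x\circ(y\circ z)-(x\circ y)\circ z\\
&=[Px,[Py,z]]-[P([Px,y]),z].
\end{align*}
Swapping $x$ and $y$ gives $a(y,x,z)=[Py,[Px,z]]-[P([Py,x]),z]$. The goal is to show that $a(x,y,z)-a(y,x,z)=0$.

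The key step is to group the terms as
\begin{align*}
a(x,y,z)-a(y,x,z)&=\bigl([Px,[Py,z]]-[Py,[Px,z]]\bigr)\\
&\quad-[P([Px,y])-P([Py,x]),z].
\end{align*}

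\textbf{First group.} By the Jacobi identity in $L$,
\[
[Px,[Py,z]]-[Py,[Px,z]]=[[Px,Py],z].
\]

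\textbf{Second group.} By linearity of $P$ and antisymmetry of the bracket,
\[
P([Px,y])-P([Py,x])=P([Px,y]+[x,Py]).
\]
By the Rota-Baxter identity of weight $0$, this equals $[Px,Py]$.

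Substituting both results, the difference becomes
\[
[[Px,Py],z]-[[Px,Py],z]=0,
\]
which is exactly \eqref{eq 1}. Hence $(L,\circ)$ is a pre-Lie algebra.

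No step here is a real obstacle; the computation closes immediately. The one point needing care is the sign bookkeeping when rewriting $-P([Py,x])$ as $+P([x,Py])$, so that the Rota-Baxter relation applies in exactly the form stated in the definition.

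It is also worth noting, though it is not needed for the statement, that the commutator of $\circ$ is
\[
x\circ y-y\circ x=[Px,y]+[x,Py].
\]
This is the descendent Lie bracket, and $P$ is a Lie algebra morphism from it to $L$. This is consistent with the previous proposition.
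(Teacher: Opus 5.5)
Your proof is correct and complete. The associator expansion, the Jacobi step $[Px,[Py,z]]-[Py,[Px,z]]=[[Px,Py],z]$, and the sign-corrected rewriting $P([Px,y])-P([Py,x])=P([Px,y]+[x,Py])=[Px,Py]$ are all valid, so the two groups cancel.

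The paper recalls this proposition as a known fact without giving a proof. Your direct verification of the left-symmetry identity is the standard argument it implicitly relies on.
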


\begin{rmq}
    A nonassociative  $(A,\circ)$ is right-symmetric  algebra if  the associator is symmetric in the right two variables $y,z$,  that is,  $
        a(x,y,z)= a(x,z,y).$
   
    Note that a vector space with a binary operation $(A,\circ)$ is left-symmetric if and only if its opposite algebra $(A,\circ^{opp})$ is right-symmetric. In this case, the Lie brackets induced by these two algebras are opposite, ie $[x,y]^{opp}=-[x,y] $ for all $x,y\in A$.
\end{rmq}

Let $(L,P)$ be a Rota-Baxter Lie algebra. Defining $x\circ y= [Px,y] $, we obtain a pre-Lie algebra $(L,P)$. The Lie algebra associated with this pre-Lie structure has bracket $[x,y]_{C}=x\circ y-y\circ x $. Then $P:(L,[\cdot,\cdot]_C)\to (L,[\cdot,\cdot]) $ is a morphism of Lie algebras. 
\[\begin{tikzcd}[row sep=large, column sep=large]
	{(L,[\cdot,\cdot])} & {(L,\circ)} \\
	& {(L,[\cdot,\cdot]_C)}
	\arrow["{[Px,y]}", from=1-1, to=1-2]
	\arrow[from=1-2, to=2-2]
	\arrow["P", from=2-2, to=1-1]
\end{tikzcd}\]

\section{Rota-Baxter operators on restricted Lie algebras}\label{s3}

\subsection{Restricted Rota-Baxter operators}

 In this subsection, we recall some results on restricted Lie algebras and introduce the notion of restricted Rota-Baxter operators.

\begin{defi}\label{d3.1}
    We say that $L$ is a restricted Lie algebra over a field $\mathbf{K}$ of characteristic $p>0$ if $L$ is a Lie algebra equipped with a $p$-map $(\cdot)^{[p]}: L\to L$ satisfying the following properties: \begin{enumerate}
        
        \item $(\alpha x)^{[p]}= \alpha^p x^{[p]} $, for all $\alpha \in \mathbf{K}$ and $x\in L$,
        \item  $[x^{[p]},y]= (ad_x)^p(y) $, for all $x\in L$ and $y\in L $,
        \item  $(x+y)^{[p]}= x^{[p]} + y^{[p]} + \sum\limits_{i=1}^{p-1} s_i(x,y) $, for all $ x,y\in L$,
    \end{enumerate} where $\ad_x(y) = [x,y] $ and $is_i(x,y) $ is the coefficient of $\lambda^{i-1} $ in $(\ad_{\lambda x+y})^{p-1}(x) $.
\end{defi}

\begin{defi}
    Let $V$ be a vector space. We say that a map $f:V\to V $ is $p$-semi linear if $f(\alpha x+ y)= \alpha^p f(x)+ f(y) $ for all $x,y\in V$, $\alpha\in \mathbf{K}$.
\end{defi}

\begin{defi}
    Let $(L,(\cdot)^{[p]}), (L',(\cdot)^{[p]'})$ be two restricted Lie algebras. A morphism of restricted Lie algebras between $L$ and $L'$ is a morphism of Lie algebras $f:L\to L' $ such that $f(x^{[p]})=f(x)^{[p]'} $ for all $x\in L$.
\end{defi}

\begin{ex}
    \begin{enumerate}
        \item Let $A$ be an associative algebra. Then the associated Lie algebra is restricted by defining $x^{[p]}=x^p $ for all $x\in A$.
        \item Let $L$ be an abelian Lie algebra with a map $f:L\to L $. Then, $(L,f)$ is restricted if and only if $f $ is $p$-semi linear.
    \end{enumerate}
\end{ex}

\begin{rmq}
    Even if the $p$-map is not linear, points $1$ and $3$ of the \hyperref[d3.1]{Definition~\ref*{d3.1}} show that it is determined by its value on a basis. Moreover, if $(\cdot)^{[p]_1} $ and $(\cdot)^{[p]_2} $ are two $p$-map of a restricted Lie algebra $L$, then $(\cdot)^{[p]_1}-(\cdot)^{[p]_2} $ takes value in the center $C(L) $ of $L$. Then, $L$ being centerless implies that the $p$-map, if it exists, is unique.
\end{rmq}

The following result is useful in order to show the existence of a restricted structure on a Lie algebra.

\begin{thm}[Jacobson]\label{thm3.4}
    Let $L$ be a Lie algebra. Assume that there exists $(e_{j})_{j\in J} $ a basis of $L$ such that there are $y_{j}\in L_{p} $ with $(\ad (e_{j}))^p= \ad (y_{j})$ for all $j\in J $. Then there exists exactly one $p$-map $(\cdot)^{[p]}:L\to L $ such that $e_{j}^{[p]}=y_{j} $ for all $j\in J$. 
\end{thm}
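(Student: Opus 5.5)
The plan is the classical argument via the universal enveloping algebra $U(L)$, in which the Frobenius map $u\mapsto u^p$ is available and satisfies Jacobson's formula. (I read $y_j\in L_p$ as $y_j\in L$.)

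\textbf{Step 1: a $p$-semilinear map into the centre.} Consider the associative algebra $U(L)$ with its restricted Lie structure $u\mapsto u^p$, from the first item of the Example above. For each $j$, set $z_j=e_j^p-y_j\in U(L)$. In $\mathrm{End}(U(L))$ one has $\ad(e_j^p)=(\ad e_j)^p$. On the generators $L$ this acts as $\ad(y_j)$ by hypothesis, so $\ad(z_j)$ vanishes on $L$. Hence $z_j$ is central in $U(L)$. Let $f:L\to C(U(L))$ be the unique $p$-semilinear map with $f(e_j)=z_j$, namely $f(\sum\alpha_je_j)=\sum\alpha_j^p z_j$.

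\textbf{Step 2: define the $p$-map.} Set $V=\{x\in L : x^p-f(x)\in L\}\subset U(L)$. I claim $V$ is a subspace.
\begin{itemize}
\item Scalars: $(\alpha x)^p-f(\alpha x)=\alpha^p(x^p-f(x))$.
\item Sums: by Jacobson's formula in the associative algebra $U(L)$, $(x+y)^p=x^p+y^p+\sum_i s_i(x,y)$, where each $s_i(x,y)$ is a Lie polynomial in $x,y$ and so lies in $L$. Since $f$ is additive, $(x+y)^p-f(x+y)$ differs from $(x^p-f(x))+(y^p-f(y))$ by an element of $L$.
\end{itemize}
Each $e_j\in V$, since $e_j^p-f(e_j)=y_j$. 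Hence $V=L$.

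Define $x^{[p]}=x^p-f(x)$. The three axioms of Definition~\ref{d3.1} then follow:
\begin{itemize}
\item $p$-homogeneity was shown above.
\item The Jacobson sum formula was shown above, since $f$ is additive.
\item For $[x^{[p]},y]=(\ad_x)^p(y)$: $f(x)$ is central, so $[x^{[p]},y]=[x^p,y]=(\ad x)^p(y)$ in $U(L)$.
\end{itemize}
By construction $e_j^{[p]}=y_j$. The bracket of $L$ is its bracket inside $U(L)$ because $L\hookrightarrow U(L)$ by PBW. PBW is also needed to see that the elements $x^p-f(x)$ lying in $L$ make sense, i.e.\ that $L\subset U(L)$ is a genuine subspace.

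\textbf{Step 3: uniqueness.} Let $[p]_1,[p]_2$ be two $p$-maps agreeing on the basis. By the Remark above, their difference $g$ takes values in $C(L)$. By axioms 1 and 3, $g$ is $p$-semilinear: the $s_i$ terms cancel because they depend only on the bracket. Since $g$ vanishes on the basis, $g=0$.

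\textbf{Main obstacle.} The subtle step is the additivity argument in Step 2. It requires the universal Jacobson formula for $(x+y)^p$ in any associative algebra of characteristic $p$, with $s_i$ given by the same Lie polynomials as in Definition~\ref{d3.1}. I would prove it in the standard way: differentiate $(\lambda x+y)^p$ in $\lambda$ and use $\ad(a)^{p-1}(b)=\sum_k a^kba^{p-1-k}$ in characteristic $p$. I take this formula as the standard ingredient.
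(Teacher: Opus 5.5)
Your proof is correct, and there is no in-paper proof to compare it against. The paper states this result as a classical theorem of Jacobson and uses it as a black box: for Proposition~\ref{p3.7} (see the Remark after it) and, in its triple-system form Theorem~\ref{thm5.3}, for Theorem~\ref{thm5.7}.

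What you wrote is the standard argument: twist the Frobenius map of $U(L)$ by a central-valued $p$-semilinear correction $f$. The individual steps all check out:
\begin{itemize}
\item Your reading of $y_j\in L_p$ as $y_j\in L$ is right.
\item The centrality of $z_j=e_j^p-y_j$ holds because $\ad(z_j)$ is a derivation of $U(L)$ that vanishes on the generating subspace $L$.
\item The set $\{x\in L : x^p-f(x)\in L\}$ is a subspace. Here you correctly use that the $s_i(x,y)$ computed in $U(L)$ are Lie polynomials, hence coincide with those computed in $L$.
\item Injectivity of $L\to U(L)$ (PBW) is used in exactly the places you flag.
\end{itemize}

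One small simplification is available in Step~3. You do not need the Remark that the difference of two $p$-maps is central. Axioms 1 and 3 already make the difference $p$-semilinear, since the $s_i$ terms cancel, and it vanishes on a basis, so it is zero.

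Your reliance on PBW is also consistent with the paper's own comment before Proposition~\ref{pro5.7}. There the authors note that the triple-system analogue of this theorem is available exactly when the corresponding PBW theorem holds.
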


\begin{defi}
    A restricted representation of a restricted Lie algebra $L$ is a morphism of restricted Lie algebras $\rho:L\to \mathrm{End}(V)_G$, where $V$ is a vector space, $\mathrm{End}(V)$ is endowed with the Lie bracket $[f,g]=fg-gf $ and the $p$-map is given by the $p^{th}$ power.
\end{defi}

\begin{ex}
    The adjoint representation of a restricted Lie algebra is a restricted representation.
\end{ex}

Let $L$ be a restricted Lie algebra and let $\varphi:L\to \mathrm{End}(V)$ be a restricted representation. We define a Lie algebra structure on $L\oplus V $ with the bracket \begin{center}
    $[(x,v),(y,w)]= ([x,y],\varphi(x)(w) - \varphi(y)(v)) $ for all $(x,v),(y,w)\in L\oplus V $.
\end{center}

\begin{prop}[\cite{BEMS25}]\label{p3.7}
    Let $L$ be a restricted Lie algebra and let $(\varphi,V)$ be a restricted representation of $L$. Then, for any basis $(x_i,v_i)$ of $L\oplus V$, there exist a $p$-map on $L\oplus V$ given by the formula $$(x_i,v_i)^{[p]}= (x_i^{[p]}, \varphi(x_i)^{p-1}(v_i)) $$ on the basis. 
\end{prop}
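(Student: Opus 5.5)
The natural tool is Jacobson's \hyperref[thm3.4]{Theorem~\ref*{thm3.4}} applied to the semidirect product Lie algebra $L\oplus V$. Since $(x_i,v_i)$ is a basis of $L\oplus V$, it suffices to check that
\[
(\ad_{(x_i,v_i)})^p=\ad_{(x_i^{[p]},\,\varphi(x_i)^{p-1}(v_i))}
\]
for each basis element. Theorem~\ref{thm3.4} then gives a unique $p$-map taking these prescribed values on the basis.

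First I compute $\ad_{(x,v)}$ for arbitrary $(x,v)$. By definition, $\ad_{(x,v)}(y,w)=([x,y],\varphi(x)w-\varphi(y)v)$. With respect to the decomposition $L\oplus V$ this operator is block lower triangular:
\[
\ad_{(x,v)}=\begin{pmatrix}\ad_x & 0\\ -\varphi(\cdot)v & \varphi(x)\end{pmatrix}.
\]
I then prove by induction on $k$ that
\[
(\ad_{(x,v)})^k(y,w)=\Bigl((\ad_x)^k y,\ \varphi(x)^k w-\sum_{j=0}^{k-1}\varphi(x)^{k-1-j}\varphi\bigl((\ad_x)^j y\bigr)v\Bigr).
\]
The $L$-component at $k=p$ is $(\ad_x)^p y=[x^{[p]},y]$ by axiom 2 of Definition~\ref{d3.1}. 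The first term of the $V$-component at $k=p$ is $\varphi(x)^p w=\varphi(x^{[p]})w$, because $\varphi$ is a restricted representation.

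The main obstacle is the remaining sum, which must equal $-\varphi(y)\varphi(x)^{p-1}v$. I use $\varphi((\ad_x)^j y)=(\ad_{\varphi(x)})^j\varphi(y)$, since $\varphi$ is a Lie morphism. Writing $a=\varphi(x)$ and $b=\varphi(y)$ in the associative algebra $\mathrm{End}(V)$, the required identity becomes
\[
\sum_{j=0}^{p-1}a^{p-1-j}(\ad_a)^j(b)=b\,a^{p-1}.
\]
This is the classical characteristic-$p$ identity. To prove it, expand $(\ad_a)^j(b)=\sum_i\binom{j}{i}(-1)^i a^{j-i}ba^i$ and collect the coefficient of $a^{p-1-i}ba^i$. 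That coefficient is $(-1)^i\sum_{j=i}^{p-1}\binom{j}{i}=(-1)^i\binom{p}{i+1}$. Modulo $p$ it vanishes unless $i=p-1$, where it equals $(-1)^{p-1}=1$ because $p$ is odd. Hence the sum is $ba^{p-1}$ as required.

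Combining the two components gives
\[
(\ad_{(x,v)})^p(y,w)=\bigl([x^{[p]},y],\ \varphi(x^{[p]})w-\varphi(y)\varphi(x)^{p-1}v\bigr)=\ad_{(x^{[p]},\varphi(x)^{p-1}v)}(y,w).
\]
This holds for every $(x,v)$, and in particular on the basis, so Theorem~\ref{thm3.4} yields the stated $p$-map.
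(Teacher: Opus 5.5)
Your proposal is correct and takes the same route as the paper. The paper's remark after Proposition~\ref{p3.7}, citing \cite{BEMS25}, also rests on the identity $\ad_{(x,u)}^{p}=\ad_{(x^{[p]},\varphi(x)^{p-1}(u))}$ on the semidirect product and then applies Jacobson's Theorem~\ref{thm3.4} to an arbitrary basis; you additionally carry out that verification yourself, via $\sum_{j=0}^{p-1}a^{p-1-j}(\ad_a)^j(b)=b\,a^{p-1}$ in $\mathrm{End}(V)$, where the paper defers it to \cite{BEMS25}.
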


\begin{rmq}
    The proof of this proposition uses Jacobson's theorem. In fact, it is proved in \cite{BEMS25} that for all $(x,u),(y,v) $ in $L\oplus V$, $[(x^{[p]},\varphi(x)^{p-1}(u)),(y,v)]= \ad_{(x,u)}^{p}(y,v) $. Thus, each choice of a basis of $L\oplus V $ gives rise to a $p$-map by Jacobson's theorem. If $L\oplus V$ is centerless, then all these $p$-maps coincide and the map $(x,u)\mapsto (x^{[p]},\varphi(x)^{p-1}(u)) $ is the unique $p$-map on $L\oplus V $.
\end{rmq}

We now define the notion of restricted Rota-Baxter operator on a restricted Lie algebra.

\begin{defi}\label{d3.6}
    Let $L$ be a restricted Lie algebra. A Rota-Baxter operator $P$ is said to be restricted if $$P(x)^{[p]}=P(\ad_{Px}^{p-1}(x)) $$ for all $x\in L $.
\end{defi}

This definition is motivated by the following result.

\begin{prop}\label{p3.10}
    Let $L$ be a restricted Lie algebra. A Rota-Baxter operator $P$ is restricted if, and only if, $Gr(P) $ is a restricted Lie subalgebra of $L\oplus L$, where $L\oplus L $ is endowed with the $p$-map defined in the \hyperref[p3.7]{Proposition~\ref*{p3.7}} with respect to the adjoint representation, for any basis of $L\oplus L $. 
\end{prop}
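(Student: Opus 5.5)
The plan is to reduce everything to the explicit map
$$\Phi:L\oplus L\to L\oplus L,\qquad \Phi(x,u)=(x^{[p]},\ad_x^{p-1}(u)),$$
and to compare it with the $p$-map of Proposition~\ref{p3.7}. There are three steps: (i) identify the $p$-map on graph elements with $\Phi$; (ii) compute $\Phi$ on $Gr(P)$; (iii) read off the equivalence.

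\textbf{Step (i).} I would first show that $\Phi$ is itself a $p$-map on the semidirect product $L\oplus L$, for the adjoint module. This is the standard $p$-map on a split abelian extension by a restricted module.
\begin{itemize}
\item Property~1 is immediate, since $(\alpha x)^{[p]}=\alpha^p x^{[p]}$ and $\ad_{\alpha x}^{p-1}(\alpha u)=\alpha^p\ad_x^{p-1}(u)$.
\item Property~2 is exactly the identity $[\Phi(x,u),(y,v)]=\ad^p_{(x,u)}(y,v)$ recorded in the Remark after Proposition~\ref{p3.7}.
\item Property~3 is the real point. Expand $s_i((x,u),(y,v))$ in $L\oplus L$. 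Brackets of two elements of the second summand vanish, and the first summand acts on the second by $\ad$. Hence the second component of $\ad_{\lambda(x,u)+(y,v)}^{p-1}(x,u)$ is linear in $(u,v)$. Comparing with $\ad_{\lambda x+y}^{p-1}$, one then checks that $\ad_{x+y}^{p-1}(u+v)-\ad_x^{p-1}u-\ad_y^{p-1}v$ equals the second component of $\sum_i s_i$.
\end{itemize}
The cleanest way to do this check is to embed everything in the associative algebra $\mathrm{End}(L)\ltimes L$, i.e.\ in a square-zero extension. There $\Phi$ becomes the $p$-th power $(a+u)^p=a^p+\sum_{j}a^j u a^{p-1-j}$ restricted to the relevant part, and Jacobson's formula for $p$-th powers in associative algebras gives additivity. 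Consequently the $p$-map of Proposition~\ref{p3.7}, built by Jacobson's theorem from the values of $\Phi$ on a basis, coincides with $\Phi$ everywhere. By the uniqueness part of Theorem~\ref{thm3.4}, two $p$-maps agreeing on a basis are equal. This settles the ``for any basis'' clause.

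\textbf{Step (ii).} Take $z=(Px,x)\in Gr(P)$. Then
$$z^{[p]}=\Phi(Px,x)=\big((Px)^{[p]},\ \ad_{Px}^{p-1}(x)\big).$$
Recall that $Gr(P)$ is already a Lie subalgebra, because $P$ is Rota-Baxter. So being a restricted subalgebra means exactly $z^{[p]}\in Gr(P)$ for all $z\in Gr(P)$. An element $(a,b)$ lies in $Gr(P)$ if and only if $a=P(b)$. Thus the condition reads
$$(Px)^{[p]}=P\big(\ad_{Px}^{p-1}(x)\big)\quad\text{for all }x\in L,$$
which is precisely Definition~\ref{d3.6}.

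\textbf{Step (iii).} Both directions of the equivalence now follow at once from Step (ii).

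The main obstacle is Step (i): verifying additivity of $\Phi$, i.e.\ matching the cross terms with the $s_i$. If the associative-embedding argument proves awkward, I would try a fallback. Choose a basis of $L\oplus L$ adapted to $Gr(P)$, namely $\{(Pe_j,e_j)\}\cup\{(e_j,0)\}$. Then argue that the set of $z$ in the subalgebra $Gr(P)$ with $z^{[p]}\in Gr(P)$ is closed under scalars and sums, since the $s_i$ are Lie words in $Gr(P)$. Finally, handle the dependence on the basis through the centrality of differences of $p$-maps.
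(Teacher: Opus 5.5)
Your proof is correct, but it takes a different route from the paper's.

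\textbf{The paper's route.} The paper never proves that $\Phi(x,u)=(x^{[p]},\ad_x^{p-1}(u))$ is a $p$-map on all of $L\oplus L$. It only uses that the $p$-map of Proposition~\ref{p3.7} takes the value $\Phi$ on the chosen basis vectors.
\begin{itemize}
\item For the forward direction, it picks a basis of $L\oplus L$ extending a basis $(Px_i,x_i)$ of $Gr(P)$. It then uses that a Lie subalgebra whose basis vectors are sent into it is closed under the $p$-map, since the $s_i$ are Lie words.
\item For the converse, it completes a single vector $(Px,x)$ to a basis and reads off the identity at $x$.
\end{itemize}

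\textbf{Your route.} Your Step (i) instead proves, via Jacobson's formula in a square-zero extension, that $\Phi$ is itself a $p$-map. By uniqueness, every basis-induced $p$-map then equals $\Phi$, and the equivalence reduces to the one-line computation of Step (ii).

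\textbf{What each buys.} You pay for the additivity check, which the paper avoids by relying only on Theorem~\ref{thm3.4} and the property-2 identity. In return, you get something the paper's argument lacks.
\begin{itemize}
\item You actually handle the ``for any basis'' clause in the forward direction. The paper only checks bases adapted to $Gr(P)$.
\item The paper's remarks assert basis-independence, and that $\Phi$ is a $p$-map, only when $L\oplus L$ is centerless. Your argument shows that hypothesis is unnecessary.
\end{itemize}

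\textbf{One point to state precisely.} In the associative model, take $L$ as a left $\mathrm{End}(L)$-module with zero right action. Then the commutator is the semidirect bracket and $(a+u)^p=a^p+a^{p-1}(u)$; the general bimodule formula specializes to this.

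The map $(x,u)\mapsto(\ad_x,u)$ into $\mathrm{End}(L)\oplus L$ is not injective when $L$ has a center. So it only controls the second component. You have two clean options:
\begin{itemize}
\item Take the first component from Jacobson's formula in $L$ itself. This is essentially your componentwise sketch.
\item Embed injectively into $L\times(\mathrm{End}(L)\oplus L)$ via $(x,u)\mapsto(x,\ad_x,u)$. There $\ad_{x^{[p]}}=\ad_x^p$ makes $\Phi$ correspond to the componentwise $p$-th power.
\end{itemize}
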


\begin{proof}
    It remains to show that $Gr(P) $ is closed under the $p$-map in $L\oplus L $ if and only if $P$ is restricted. First, we observe that for $Gr(P) $ to be closed under a $p$-map it is sufficient that for a basis $(Px_i,x_i) $ of $Gr(P)$, $(Px_i,x_i)^{[p]}\in Gr(P) $. The $p$-map defined on the semi-direct product $L\oplus L$ with respect to the adjoint representation is given, on a basis $(x_i,y_i) $, by $(x_i,y_i)^{[p]}=(x_i^{[p]},\ad_{x_i}^{p-1}(y_i)) $. We can choose a basis of $L\oplus L$ by taking a basis $(Px_i,x_i) $ of $Gr(P)$ and completing it to a basis of $L\oplus L$. Therefore, a Rota-Baxter operator must satisfy the condition $P(x_i)^{[p]}=P(\ad_{Px_i}^{p-1}(x_i)) $. In this case, $(Px_i,x_i)^{[p]}\in Gr(P)$ and $Gr(P) $ is a restricted subalgebra of the restricted Lie algebra $L\oplus L$. This condition is exactly the definition of a restricted Rota-Baxter operator. Conversely, assume that $Gr(P) $ is stable under any $p$-map induced by a choice of basis of $L\oplus L$. Let $x\in L$ be a nonzero element. Then $(Px,x) $ can be completed to a basis of $L\oplus L $. Since $Gr(P)$ is stable under the corresponding $p$-map, we have $(Px,x)^{[^p]}=(P(x)^{[p]},\ad_{Px}^{p-1}(x))\in Gr(P) $ which shows that $P $ is a restricted Rota-Baxter operator.
\end{proof}

\begin{rmq} Assume that $L $ is centerless. Then the semi-direct product $L\oplus L $ is also centerless. The map $(x,y)\mapsto (x^{[p]},\ad_x^{p-1}(y)) $ is then a $p$-map. 
    Thus, the condition on the graph of $P$ shows that it suffices to verify that $P(e_i)^{[p]}=P(\ad_{Pe_i}^{p-1}(e_i)) $ for a basis $(e_i)$ of $L$ to prove that $P$ is restricted. Indeed, in order to show that $Gr(P) $ is closed under the $p$-map, it suffices to show that for a generating family $(u_i)$ of $Gr(P)$, we have $u_i^{[p]}\in Gr(P) $. However, $((Pe_i,e_i)) $ is a generated family of $Gr(P) $.
\end{rmq}

\begin{ex}
We recall that the restricted Witt algebra is the $\mathbf{K}- $vector space $W=\bigoplus\limits_{i=-1}^{p-2}\mathbf{K}e_i $ endowed with the Lie bracket defined by $$[e_i,e_j]=(j-i)e_{i+j}, $$ where $j-i $ and $i+j $ are computed modulo $p$ and the $p$-map is defined by \begin{center}
    $e_0^{[p]}=e_0$ and $e_i^{[p]}=0 $ for $i= -1,1,2,\ldots, p-2$.
\end{center}  
   Using the work of \cite{GLBJ16}, the operator $P_k^\alpha$ defined by $P_k^\alpha(e_m)=\alpha\delta_{m+2k,0}e_{m+k} $ with $\alpha\in \mathbf{K},k\in (\mathbf{Z}/p\mathbf{Z})^\star $ is a Rota-Baxter operator on $W$. It is a restricted Rota-Baxter operator. Indeed, let $x= \sum\limits_{i=-1}^{p-2}x_i e_i\in W $. We want to show that $$P_k^\alpha(x)^{[p]}= P_k^\alpha(\ad_{P_k^\alpha(x)}^{p-1}(x)). $$ We have that $$P_k^\alpha(x)= x_{-2k}P_k^\alpha(e_{-2k})= \alpha x_{-2k}e_{-k} $$ so $P_k^\alpha(x)^{[p]}=0 $. To obtain the result, it remains to show that the projection of $\ad_{P_k^\alpha(x)}^{p-1}(x) $ on $\mathbf{K}e_{-2k} $ is zero. However, $$[e_{-k},x]= \sum\limits_{i\neq -k}(i+k) x_ie_{i-k} ,\  [e_{-k},[e_{-k},x]]=\sum\limits_{i\neq -k,0}i(i+k)x_ie_{i-2k}, $$ and finally, in $\ad_{P_k^\alpha(x)}^{p-1}(x) $ the coefficient of $e_{i+k}$ vanishes for $i=-k,0,\ldots,(p-3)k$ and the projection of $\ad_{P_k^\alpha(x)}^{p-1}(x) $ on $\mathbf{K}e_{-2k} $ is zero. So $P_k^\alpha(\ad_{P_k^\alpha(x)}^{p-1}(x))=0 $ and $P_k^\alpha$ is a restricted Rota-Baxter operator.
\end{ex}

\subsection{Restricted pre-Lie algebras}

Dzhumadil'daev showed that, in right-symmetric algebras, Jacobson's formula holds for the $p^{th}$ power of the sum of two elements. More precisely, we have the following result.

\begin{prop}[\cite{D01}]
    Let $(A,\circ) $ be a right symmetric algebra. Then, for any $x,y\in A$, we have \begin{center}
        $(x+y)^{.p}= x^{.p}+ y^{.p} + \sum\limits_{i=1}^{p-1} s_i(x,y) $,
    \end{center} where $is_i(x,y) $ is the coefficient of $t^{i-1} $ in $\ad(tx+y)(x) $ and $$x^{.p}= ((((x\circ x)\circ x)\cdots)\circ x)\circ x. $$
\end{prop}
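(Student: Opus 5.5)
The plan is to reduce the right-symmetric identity to Jacobson's formula in an associative algebra, using the right multiplication operators. For $x\in A$ write $R_x(z)=z\circ x$. Right-symmetry, $a(x,y,z)=a(x,z,y)$, says exactly that
\[
R_zR_y-R_yR_z=R_{y\circ z-z\circ y}.
\]
Hence $R:A\to \mathrm{End}(A)$ is a Lie anti-homomorphism for the commutator bracket $[y,z]=y\circ z-z\circ y$. Equivalently, it is a Lie homomorphism from $(A,[\cdot,\cdot]^{opp})$, which matches the earlier Remark on opposite algebras. Moreover, $x^{.p}=R_x^{p-1}(x)$. So the statement is about the element obtained by applying $R_x^{p-1}$ to $x$.

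The main difficulty is that $A$ need not be unital, so $x^{.p}$ cannot simply be read off from $R_x^p$. I would handle this by adjoining a unit: let $\tilde A=\mathbf{K}1\oplus A$ with $1\circ a=a\circ 1=a$. I would first check that $\tilde A$ is still right-symmetric. Every associator containing $1$ in any slot vanishes, so this check is routine. In $\tilde A$ we then have $x^{.p}=R_x^{p}(1)$, since $R_x(1)=x$.

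Now work in the associative algebra $\mathrm{End}(\tilde A)$, where Jacobson's classical formula gives
\[
(R_x+R_y)^p=R_x^p+R_y^p+\sum_{i=1}^{p-1}s_i(R_x,R_y),
\]
with $s_i$ computed from commutators in $\mathrm{End}(\tilde A)$. Since $R_{x+y}=R_x+R_y$ and $R$ sends commutators to (opposite) commutators, each $s_i(R_x,R_y)$ is a Lie polynomial in $R_x,R_y$. It therefore equals $R$ applied to the same Lie polynomial in $x,y$, taken with respect to the opposite bracket. Because $s_i$ is homogeneous of odd total degree $p$, the sign change from the opposite bracket should be absorbed, possibly after swapping to $\ad(tx+y)$ versus its opposite. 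Tracking this sign is the step I expect to need the most care. I would verify it directly via $\ad^{opp}_u=-\ad_u$ and the fact that $(-1)^{p-1}=1$.

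Thus $s_i(R_x,R_y)=R_{s_i(x,y)}$. Applying the operator identity to $1\in\tilde A$ gives
\[
(x+y)^{.p}=x^{.p}+y^{.p}+\sum_{i=1}^{p-1}R_{s_i(x,y)}(1),
\]
and $R_{s_i(x,y)}(1)=s_i(x,y)$. All terms lie in $A$, so the identity holds in $A$.
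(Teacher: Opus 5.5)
Your proof is correct. It is complete once the sign step is stated crisply. From $R_{[u,v]}=-[R_u,R_v]$ you get $R_{\ad_u^{k}(v)}=(-1)^k\ad_{R_u}^{k}(R_v)$. With $u=tx+y$ and $k=p-1$ the sign disappears. Comparing coefficients of $t^{i-1}$ and dividing by $i$ (invertible for $1\le i\le p-1$) then gives $s_i(R_x,R_y)=R_{s_i(x,y)}$.

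The paper gives no proof of this statement; it quotes it from Dzhumadil'daev. The only internal point of comparison is its proof of the ternary analogue, Theorem~\ref{T5.9}. That proof reruns Jacobson's differentiation argument inside the algebra: expand $(\lambda x+y)^{\{p\}}$ in $\lambda$, differentiate by the Leibniz rule, and identify the derivative with the iterated bracket $(x,u,\ldots,u)_C$ using the explicit identities of Proposition~\ref{p5.7} and Corollary~\ref{c5.8}.

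Your route instead uses the associative Jacobson formula as a black box, transported to $\mathrm{End}(\tilde A)$ by the right regular representation. The unitization is what turns $x^{.p}=R_x^{p}(1)$ into a genuine $p$-th power of a linear operator evaluated at a fixed vector. Right-symmetry is exactly what makes $R$ a Lie anti-homomorphism. This gives a short, computation-free argument.

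It does depend on the $p$-map having the form $M_u^{p}(v_0)$ with $M$ linear in $u$, which is special to the binary case. In a pre-Lie triple system, $u^{\{p\}}=\theta(u,u)^{\frac{p-1}{2}}(u)$ with $\theta(u,u)$ quadratic in $u$, and there is no evident analogous associative model. That is why the paper uses direct differentiation there.

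You also rightly read $is_i(x,y)$ as the coefficient of $t^{i-1}$ in $\ad(tx+y)^{p-1}(x)$; the exponent $p-1$ is missing in the statement as printed.
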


 Let $(A,\circ) $ be a right symmetric algebra. Then $(A,\circ^{opp}) $ is a pre-Lie algebra and the Lie brackets induced by $(A,\circ) $ and $(A,\circ^{opp}) $ are opposite, ie $[x,y]^{opp}=-[x,y] $ for all $x,y\in A$. In that case, the elements $s_i(x,y) $ coincide, and Jacobson's formula remains valid for pre-Lie algebras. This result justifies the following definition. Dzhumadil'daev introduced a notion of restricted pre-Lie algebra which we will call a trivially restricted pre-Lie algebra, following the terminolgy of \cite{EG25}.

\begin{defi}[\cite{D01}]\label{D3.7}
    Let $(L,\circ)$ be a pre-Lie algebra over a field $\mathbf{K}$ of characteristic $p$. We say that $L$ is a trivially restricted pre-Lie algebra if the following relation holds: \begin{equation}\label{eq3}
        x^{\circ. p}\circ y = x\circ (x\circ (\cdots (x\circ y))) 
    \end{equation} where \begin{equation}
        x^{\circ .p}=x\circ (x\circ (\cdots (x\circ x)))  .
    \end{equation}
\end{defi}

\begin{prop}[\cite{D01}]\label{p3.8}
    Let $(A,\circ) $ be a trivially restricted right-symmetric algebra. Then the Lie algebra induced by $(A,\circ)$ is restricted, with $x^{[p]}= x^{.p} $. 
\end{prop}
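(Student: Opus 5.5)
The plan is to check the three axioms of Definition~\ref{d3.1} for the bracket $[x,y]=x\circ y-y\circ x$ with $x^{[p]}:=x^{.p}$. The first axiom is immediate: $x^{.p}$ is a product of $p$ copies of $x$, so $(\alpha x)^{.p}=\alpha^p x^{.p}$. The third axiom, the Jacobson formula for $(x+y)^{.p}$, is exactly the preceding result of \cite{D01} for right-symmetric algebras, so I will quote it. What remains is the second axiom, $[x^{.p},y]=\ad_x^p(y)$, and this is where the trivially restricted hypothesis enters. I read that hypothesis, in the right-symmetric convention, as $y\circ x^{.p}=R_x^p(y)$, where $R_x(y)=y\circ x$; in other words, $R_{x^{.p}}=R_x^p$.

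First I will extract the operator identities coming from right-symmetry, $a(z,y,x)=a(z,x,y)$. Rewritten in terms of right multiplications, this gives
\[
R_{[y,x]}=R_xR_y-R_yR_x ,
\]
so $x\mapsto -R_x$ is a Lie representation. Evaluating at $z=x$ gives a second identity relating $L_x$ and $R_x$: $[L_x,R_x]=L_x^2-L_{x\circ x}$ as operators on $A$, where $L_x(y)=x\circ y$. With $\ad_x=L_x-R_x$, the target identity becomes
\[
x^{.p}\circ y-R_x^p(y)=(L_x-R_x)^p(y).
\]
By the hypothesis, this reduces to proving
\[
L_{x^{.p}}=(L_x-R_x)^p+R_x^p .
\]

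Next I will expand $(L_x-R_x)^p$ with a noncommutative binomial expansion. I will push every $R_x$ to the right using the commutation relation above, which lets me rewrite every word in the operators $L_x$, $R_x$ and $L_{x^{\circ k}}$-type terms. The goal is to show that all mixed terms assemble, with coefficients that are multiples of $p$, except for the pure terms $L_x^p$-type contributions that recombine into $L_{x^{.p}}$. This is the analogue of Jacobson's argument that $(a-b)^p=a^p-b^p$ plus commutator corrections in characteristic $p$. As a consistency check, I will also apply $R$ to the target identity. Iterating the anti-representation gives $R_{\ad_x^k y}=(-1)^k\ad_{R_x}^k(R_y)$, so $R_{\ad_x^p y}=-[R_x^p,R_y]=-[R_{x^{.p}},R_y]=R_{[x^{.p},y]}$. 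This confirms the identity modulo $\ker R$ and guides the bookkeeping.

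The main obstacle is precisely this expansion: $L_x$ and $R_x$ do not commute, and their commutator is not a simple multiple of either operator. If the direct operator calculus becomes unmanageable, I will fall back on a two-step argument. I will work in the free right-symmetric algebra on $x,y$, whose basis consists of rooted trees (Chapoton--Livernet), and compare the coefficients of the monomials that are linear in $y$ on both sides. The required cancellations then reduce to vanishing of binomial-type coefficients $\binom{p}{k}$ modulo $p$ for $0<k<p$.
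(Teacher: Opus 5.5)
Your handling of axioms 1 and 3 is fine. Your reduction of axiom 2 to the operator identity $L_{x^{.p}}=(L_x-R_x)^p+R_x^p$ is also correct; in fact this identity holds in \emph{every} right-symmetric algebra of characteristic $p$, and the hypothesis only serves to replace $R_x^p$ by $R_{x^{.p}}$. (The paper gives no proof of its own here; it only quotes Dzhumadil'daev.)

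The gap is that this identity is the whole content of axiom 2, and you do not prove it. ``The goal is to show that all mixed terms assemble, with coefficients that are multiples of $p$'' is a hope, not an argument, and the rooted-tree fallback is equally unexecuted. The route you sketch is also badly set up, for two reasons.
\begin{enumerate}
\item You specialize right-symmetry to $z=x$. Moving $R_x$ past the operators $L_{x^{.k}}$ that appear needs $[L_z,R_x]=L_zL_x-L_{z\circ x}$ for $z=x^{.k}$.
\item Pushing $R_x$ to the \emph{right} does not produce binomial coefficients. Already for $p=3$ one gets
\[
(L_x-R_x)^3=6L_x^3-6L_x^2R_x+3L_xR_x^2-3L_xL_{x^{.2}}-3L_{x^{.2}}L_x+3L_{x^{.2}}R_x+L_{x^{.3}}-R_x^3,
\]
and you give no closed formula for general $p$ from which divisibility by $p$ could be read off.
\end{enumerate}

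The missing idea is to read $a(z,y,x)=a(z,x,y)$ as an identity of operators in $y$ with $z$ arbitrary. It says $L_z\,\ad_x=L_{z\circ x}-R_xL_z$, so $R_x$ should be pushed to the \emph{left}. Take $z=x^{.m}$, and set $L_{x^{.0}}:=\mathrm{id}$, in which case the relation is just $\ad_x=L_x-R_x$. This gives
\[
L_{x^{.m}}\ad_x=L_{x^{.(m+1)}}+(-R_x)L_{x^{.m}}.
\]
Induction on $n$, exactly as in the binomial theorem, then gives
\[
\ad_x^n=\sum_{k=0}^{n}\binom{n}{k}(-R_x)^kL_{x^{.(n-k)}} .
\]
For $n=p$ this reads $\ad_x^p=L_{x^{.p}}-R_x^p=L_{x^{.p}}-R_{x^{.p}}=\ad_{x^{.p}}$, which is axiom 2.

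An even shorter route: $x\mapsto\begin{pmatrix}-R_x&x\\0&0\end{pmatrix}$ is an injective Lie morphism from $(A,[\cdot,\cdot])$ into $\mathrm{End}(A\oplus\mathbf{K})$ with the commutator bracket. The morphism property uses exactly your identity $R_{[y,x]}=R_xR_y-R_yR_x$. The $p$-th power of this matrix is $\begin{pmatrix}-R_x^p&x^{.p}\\0&0\end{pmatrix}$, using that $p$ is odd. This equals the image of $x^{.p}$ precisely when $R_{x^{.p}}=R_x^p$. So under the hypothesis the image is closed under $p$-th powers, and all three axioms, including Jacobson's formula, follow at once without quoting it.
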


Dokas gave in \cite{D12} a generalization of the notion of restricted pre-Lie algebra by introducing an abstract $p$-map.

\begin{defi}[\cite{D12}]
    A restricted pre-Lie algebra $(L,\circ,(\cdot)^{[p]}) $, is a pre-Lie algebra over a field $\mathbf{K} $ of characteristic $p>0$, together with a map $(\cdot)^{[p]}:L\to L $, called the $p$-map, such that: \begin{enumerate}
        \item $(\alpha x)^{[p]}= \alpha ^px^{[p]} $
        \item  $x^{[p]}\circ y = (x\circ (x\circ (\cdots (x\circ y)))) $
        \item $y\circ x^{[p]}= (x\circ (x\circ (\cdots (x\circ y)))) - [x,[x,[\cdots[x,y]]]] $
        \item $(x+y)^{[p]}= x^{[p]}+y^{[p]}+ \sum\limits_{i=1}^{p-1} s_i(x,y)$
    \end{enumerate} where $[x,y]=x\circ y - y\circ x $ denotes the induced bracket.
\end{defi}

From \hyperref[D3.7]{Definition~\ref*{D3.7}} and \hyperref[p3.8]{Proposition~\ref*{p3.8}}, we see that a trivially restricted pre-Lie algebra $A$ is restricted in the sense of Dokas with the $p$-map given by $x^{[p]}=x^{\circ.p} $ for all $x\in A$.

\begin{prop}
    Let $(A,\circ, (\cdot)^{[p]}) $ be a restricted pre-Lie algebra. Then, the left multiplication $L:A\to \mathrm{End}(A), x\mapsto (y\mapsto x\circ y) $ is a restricted representation of the induced restricted Lie algebra.
\end{prop}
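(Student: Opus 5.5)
We need to show that $L:A\to \mathrm{End}(A)$ is a morphism of restricted Lie algebras. Here $A$ carries the induced bracket $[x,y]=x\circ y-y\circ x$ and its $p$-map $(\cdot)^{[p]}$, and $\mathrm{End}(A)$ carries the commutator bracket with $f\mapsto f^p$ as $p$-map. The proof splits into two independent checks: compatibility with brackets, and compatibility with $p$-maps.

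\textbf{Brackets.} This is the classical non-restricted fact recalled in Section~\ref{s2}, that left multiplication is a representation of the induced Lie algebra. For completeness, for $x,y,z\in A$ we compute
$$L_{[x,y]}(z)-[L_x,L_y](z)=(x\circ y)\circ z-(y\circ x)\circ z-x\circ(y\circ z)+y\circ(x\circ z)=-a(x,y,z)+a(y,x,z),$$
which vanishes by the left-symmetry identity \eqref{eq 1}. Hence $L_{[x,y]}=[L_x,L_y]$, so $L$ is a Lie algebra morphism. Linearity of $L$ is immediate from bilinearity of $\circ$.

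\textbf{$p$-maps.} We must show $L_{x^{[p]}}=(L_x)^p$ for all $x\in A$. Evaluating at $y\in A$, the left side is $x^{[p]}\circ y$ and the right side is $x\circ(x\circ(\cdots(x\circ y)))$ with $p$ factors of $x$. These agree precisely by axiom 2 of the definition of restricted pre-Lie algebra (\cite{D12}). Thus $L(x^{[p]})=L(x)^{[p]}$, and $L$ is a morphism of restricted Lie algebras.

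One point needs care: the codomain must be a restricted Lie algebra, and $(A,[\cdot,\cdot],(\cdot)^{[p]})$ must itself be restricted for the statement to make sense. For $\mathrm{End}(A)$ this is the first example after Definition~\ref{d3.1}, the associative algebra case. For $A$, axioms 1 and 4 of Dokas's definition give conditions 1 and 3 of Definition~\ref{d3.1}, so only condition 2, $[x^{[p]},y]=\ad_x^p(y)$, remains. Subtracting axiom 3 from axiom 2 gives
$$x^{[p]}\circ y-y\circ x^{[p]}=[x,[x,[\cdots[x,y]]]],$$
that is, $[x^{[p]},y]=\ad_x^p(y)$. So $A$ is a restricted Lie algebra and the proposition follows. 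I expect no step to be a genuine obstacle; this bookkeeping about the restricted structure on $A$ is the only part requiring attention.
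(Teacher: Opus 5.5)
Your proof is correct and takes the same approach as the paper. The paper's entire argument is that $L(x^{[p]})=L(x)^p$ is exactly axiom 2 of Dokas's definition. Your additional checks are also valid: that $L$ is a Lie algebra morphism by left-symmetry, and that subtracting axiom 3 from axiom 2 gives $[x^{[p]},y]=\ad_x^p(y)$, so the induced Lie algebra is restricted. These simply make explicit what the paper takes for granted.
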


\begin{proof}
    We have to show that for all $x\in A $, $L(x^{[p]})=L(x)^p $. It comes from point $2.$ of the definition of a restricted pre-Lie algebra.
\end{proof}

\subsection{Connection with restricted pre-Lie algebras}

If $(L,\circ,(\cdot)^{[p]}) $ is a restricted pre-Lie algebra, then the commutator bracket $[\cdot,\cdot]_C $ endows $L$ with the structure of a restricted Lie algebra $(L,[\cdot,\cdot]_C,(\cdot)^{[p]}) $, with the same $p$-map. 

\begin{prop}\label{p3.13}
Let $( A,[\cdot ,\cdot ], (\cdot)^{[p]}) $ be a restricted Lie algebra endowed with a restricted Rota-Baxter operator $P$. Define an operation $\circ$ on $A$ by
\begin{equation}
x\circ y=[P(x), y].
\end{equation}
Then $( A, \circ) $ is a trivially restricted pre-Lie algebra, that is, it is a restricted pre-Lie algebra with \begin{center}
    $x^{[p]'}= ad_{Px}^{p-1}(x)  $
\end{center} as $p$-map. $P$ is then a morphism of restricted Lie algebras $$P: ( A,[\cdot ,\cdot ]_C, (\cdot)^{[p]'}) \to (A,[\cdot,\cdot],(\cdot)^{[p]}).  $$
\end{prop}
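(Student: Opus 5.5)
The plan is to verify directly the defining identity \eqref{eq3} of \hyperref[D3.7]{Definition~\ref*{D3.7}} for the product $x\circ y=[Px,y]$. Then I will invoke the results of Dzhumadil'daev and Dokas recalled above to get the restricted structure, and finally check the morphism property.

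\textbf{Step 1 (the power $x^{\circ.p}$).} I first compute $x^{\circ.p}=x\circ(x\circ(\cdots(x\circ x)))$. Since $x\circ z=\ad_{Px}(z)$ for every $z$, the iterated left multiplication by $x$ is just iteration of $\ad_{Px}$. An immediate induction gives
\begin{equation*}
x^{\circ.p}=\ad_{Px}^{p-1}(x), \qquad x\circ(x\circ(\cdots(x\circ y)))=\ad_{Px}^{p}(y),
\end{equation*}
where the second expression has $p$ factors $x$. This already shows that the proposed $p$-map $x^{[p]'}=\ad_{Px}^{p-1}(x)$ is exactly $x^{\circ.p}$.

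\textbf{Step 2 (trivial restrictedness).} Next I compute the left-hand side of \eqref{eq3}. By definition of $\circ$ and by \hyperref[d3.6]{Definition~\ref*{d3.6}},
\begin{equation*}
x^{\circ.p}\circ y=[P(\ad_{Px}^{p-1}(x)),y]=[P(x)^{[p]},y].
\end{equation*}
By axiom 2 of \hyperref[d3.1]{Definition~\ref*{d3.1}}, this equals $\ad_{Px}^p(y)$. That is the right-hand side of \eqref{eq3} by Step 1, so $(A,\circ)$ is trivially restricted.

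\textbf{Step 3 (restricted structure).} To obtain the remaining Dokas axioms I pass to the opposite algebra $(A,\circ^{opp})$, which is right-symmetric. Its left-normed $p$-th power $((x\cdot x)\cdot x)\cdots$ coincides with $x^{\circ.p}$. Its induced bracket is $-[\cdot,\cdot]_C$, and the trivial restrictedness condition transfers as well. \hyperref[p3.8]{Proposition~\ref*{p3.8}} then makes $(A,-[\cdot,\cdot]_C)$ restricted with $p$-map $x^{\circ.p}$.

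Since $p$ is odd, the change of sign of the bracket does not affect the Jacobson axioms. Indeed, $(-\ad_x)^p=-\ad_x^p$, and each $s_i$ involves $p-1$ brackets, so it is unchanged. Hence $(A,[\cdot,\cdot]_C,(\cdot)^{[p]'})$ is restricted. This matches the remark after \hyperref[p3.8]{Proposition~\ref*{p3.8}}, so $(A,\circ,(\cdot)^{[p]'})$ is restricted in the sense of Dokas.

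I expect this step, in particular the additivity formula (axiom 4), to be the main obstacle. My plan is to import it entirely from Dzhumadil'daev's Jacobson formula for right-symmetric algebras rather than recompute it.

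\textbf{Step 4 (morphism).} The Rota-Baxter identity gives $P([x,y]_C)=P([Px,y]-[Py,x])=P([Px,y]+[x,Py])=[Px,Py]$, so $P$ is a Lie morphism. Moreover $P(x^{[p]'})=P(\ad_{Px}^{p-1}(x))=P(x)^{[p]}$, again by \hyperref[d3.6]{Definition~\ref*{d3.6}}. Hence $P$ is a morphism of restricted Lie algebras.
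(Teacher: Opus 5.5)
Your proposal is correct and follows the paper's proof: Steps 1, 2 and 4 are precisely the computation \eqref{eq4} and the morphism check via Definition~\ref{d3.6}. Step 3 only makes explicit (via the opposite right-symmetric algebra, Proposition~\ref{p3.8} and the oddness of $p$) what the paper takes from its remark that trivially restricted pre-Lie algebras are restricted in Dokas's sense. Dokas's third axiom, which you leave implicit, follows at once from $y\circ x^{[p]'}=x^{[p]'}\circ y-[x^{[p]'},y]_C$, together with axiom 2 and the restrictedness of $[\cdot,\cdot]_C$.
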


\begin{proof}
    First, we note that $x^{[p]'}= x^{\circ.p} $. So we just have to verify that equation \eqref{eq3} holds in $(A,\circ)$. For $x,y\in A$, we have \begin{equation}\label{eq4}
        x^{\circ. p}\circ y = [P(\ad_{Px}^{p-1}(x)),y]= [P(x)^{[p]},y]= \ad_{Px}^p(y)=(x\circ (x\circ (\cdots (x\circ y)))). \end{equation}  Thus, $(A,\circ) $ is trivially restricted. The fact that $P$ is a morphism of restricted Lie algebras follows directly from the \hyperref[d3.6]{Definition~\ref*{d3.6}} and the expression of the $p$-map. 
\end{proof}

From a restricted Lie algebra $(L,[\cdot,\cdot],(\cdot)^{[p]})$ endowed with a restricted Rota-Baxter operator $P$, we obtain a structure of restricted pre-Lie algebra on $L$ by defining $x\circ y=[P(x), y]$ and $x^{[p]'}= ad_{Px}^{p-1}(x)  $. The Lie algebra induced by this restricted pre-Lie algebra is itself restricted, and the Rota-Baxter operator $P$ is a morphism of restricted Lie algebras $P:(L,[\cdot,\cdot]_C,(\cdot)^{\circ.p} )\to (L,[\cdot,\cdot],(\cdot)^{[p]}) $. This situation can be summarized by the following diagram:

\[\begin{tikzcd}[row sep=large, column sep=large]
	{(L,[\cdot,\cdot],(\cdot)^{[p]})} & {(L,\circ, (\cdot)^{\circ.p})} \\
	& {(L,[\cdot,\cdot]_C,(\cdot)^{\circ.p})}
	\arrow["{[Px,y]}", from=1-1, to=1-2]
	\arrow[from=1-2, to=2-2]
	\arrow["P", from=2-2, to=1-1]
\end{tikzcd}\]




\section{Rota-Baxter operators on Lie triple systems}\label{s4}
In this section, we give some basics about Lie triple systems, define Rota-Baxter operators on Lie triple systems and present some related structures. Precisely, we show new relationships with pre-Lie triple systems and provide a new interpretation in terms of operads. 
\subsection{Definitions}

In this subsection, we provide some basic  definitions about  Lie triple systems and Rota-Baxter operators on Lie triple systems. Moreover, we give some relevant results.

\begin{defi}
    A Lie triple system is a pair $(T,[\cdot,\cdot,\cdot]) $ consisting of a vector space $T$ and a trilinear map $[\cdot,\cdot,\cdot]:T\times T\times T\to T $ such that for all $u,v,x,y,z\in T$, the following identities are satisfied: \begin{enumerate}
        \item  $[x,y,z]= -[y,x,z] $,
        \item  $[x,y,z]+ [y,z,x]+[z,x,y]=0 $,
        \item $[u,v,[x,y,z]]=[[u,v,x],y,z]+ [x,[u,v,y],z] + [x,y,[u,v,z]] $. 
    \end{enumerate} 
\end{defi}

\begin{ex}
     Let $L$ be a Lie algebra. Then any subspace $T$ of $L$ closed under the operation $(x,y,z) \mapsto[[x,y],z] $, for all $x,y,z \in T $, is a Lie triple system denoted by $L_{trip} $.
\end{ex}

\begin{defi}
    Let $T$ be a Lie triple system and $H $ be a subspace of $T$. Then $H$ is called a subsystem of $T$ if $[H,H,H]\subset H $.
\end{defi}

\begin{defi}
    Let $T$ be a Lie triple system. An embedding of $T$ is a Lie algebra together with a linear map $\varphi: T\to L $ such that, for all $x,y,z\in T $, $$\varphi([x,y,z])= [[\varphi(x),\varphi(y)],\varphi(z)]. $$
\end{defi}

\begin{ex}
    Jacobson showed in \cite{J51} that for every Lie triple systems $T$, there exists an injective embedding. there are two important examples of embedding. The standard embedding $L_S(T)$ and the universal embedding $L_U(T) $. They are both injective embedding so $T$ can be viewed as a subspace of $L_S(T)$ and $L_U(T)$ closed under the operation $[[\cdot,\cdot],\cdot] $.
\end{ex}

\begin{defi}
    Let $T$ be a Lie triple system. A linear map $P:T\to T$ is called a Rota-Baxter operator of weight $0$ on $T$ if $$[Px,Py,Pz]= P([x,Py,Pz]+[Px,y,Pz]+[Px,Py,z]) $$ for all $x,y,z \in T$.
\end{defi}

\begin{ex}
    Let $(L,[\cdot,\cdot])$ be a Lie algebra endowed with a Rota-Baxter operator $P$. Then $P$ is a Rota-Baxter operator of the induced Lie triple system $(L,[\cdot,\cdot,\cdot]) $.
\end{ex}

\begin{ex}\label{exa4.7}
    Let $T$ be a $2$-dimensional Lie triple system defined with to a basis $\{e_1,e_2\}$ by the bracket defined by $[e_1,e_2,e_2]=e_1, [e_2,e_1,e_2]=-e_1 $ and the other brackets are zero, see \cite{CHMM22}. The operator given by a matrix $P=\begin{pmatrix}
0 & a \\
0 & b \\
\end{pmatrix} $ is a Rota-Baxter operator.
\end{ex}

\begin{defi}
    Let $T$ be a Lie triple system, $V$ be a $\mathbf{K}$-vector space and $\theta:T\times T\to \mathrm{End}(V) $ be a bilinear map. A pair $(V,\theta)$ is called a representation of $T$, if for all $x,y,z,t\in T$, the following identities hold:
           \begin{enumerate}
           \item 
               $\theta(z,t)\theta(x,y)-\theta(y,t)\theta(x,z)-\theta(x,[y,z,t]) + D(y,z)\theta(x,t)=0$,
           \item $\theta(z,t)D(x,y)-D(x,y)\theta(z,t)+\theta([x,y,z],t)+ \theta(z,[x,y,t])=0$ ,
          \end{enumerate} where $D(x,y)=\theta(y,x)-\theta(x,y) $.
\end{defi}

\begin{ex}
    The pair $(\theta,T)$, where $\theta(x,y)(z)=[z,x,y] $, is a representation, called the adjoint representation.
\end{ex}

\begin{prop}
    Let $(V,\theta)$ be a representation of a Lie triple system $T$. Consider the operation $[\cdot,\cdot,\cdot]_V: (T\oplus V)\times (T\oplus V)\times (T\oplus V)\to T\oplus V  $ defined by \begin{center}
        $[(x,u),(y,v),(z,w)]_V=([x,y,z], \theta(y,z)(u)-\theta(x,z)(v)+ D(x,y)(w)) $.
    \end{center} Then, $T\oplus V$ is a Lie triple system, called the semi-direct product of $T$ by the representation $(V,\theta)$.
\end{prop}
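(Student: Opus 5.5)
The plan is a direct verification of the three Lie triple system axioms for $[\cdot,\cdot,\cdot]_V$ on $T\oplus V$. In each axiom the $T$-component is just the corresponding identity in $T$, so everything reduces to the $V$-component. Write $X=(x,u)$, $Y=(y,v)$, $Z=(z,w)$, and use repeatedly the definition $D(x,y)=\theta(y,x)-\theta(x,y)$, so that $D(x,y)=-D(y,x)$.

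\textbf{Axiom 1 (antisymmetry).} Swapping $X$ and $Y$ turns the $V$-component into
\[
\theta(x,z)(v)-\theta(y,z)(u)+D(y,x)(w),
\]
which is exactly minus the original $V$-component.

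\textbf{Axiom 2 (cyclic identity).} Sum the $V$-components of $[X,Y,Z]_V$, $[Y,Z,X]_V$ and $[Z,X,Y]_V$, and collect the terms acting on $u$. They are
\[
\theta(y,z)(u)-\theta(z,y)(u)+D(y,z)(u)=0,
\]
by the definition of $D$. The terms acting on $v$ and on $w$ cancel by the same computation after cyclic relabelling.

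\textbf{Axiom 3 (derivation identity).} This is the main work. Both sides are trilinear in the pair $(U,V')$ and in $X,Y,Z$, and their $V$-components are linear in the five vector entries. It therefore suffices to check the identity when exactly one of the five arguments has a nonzero $V$-part and all others lie in $T$.

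\begin{enumerate}
\item \emph{$V$-part in the last slot $w$.} The identity reads
\[
D(u,v)D(x,y)-D(x,y)D(u,v)=D([u,v,x],y)+D(x,[u,v,y]).
\]
We obtain it by antisymmetrizing representation axiom 2 in $(z,t)$.
\item \emph{$V$-part in the third inner slot $u_x$.} The identity reads
\[
D(u,v)\theta(y,z)=\theta(y,z)D(u,v)+\theta([u,v,y],z)+\theta(y,[u,v,z]).
\]
This is representation axiom 2 with suitable relabelling.
\item \emph{$V$-part in the middle inner slot.} This follows from the previous case by antisymmetry in the first two inner arguments, which was already established in Axiom 1.
\item \emph{$V$-part in one of the outer slots $U$ or $V'$.} Here the identity involves compositions $\theta\theta$ and $D\theta$ together with $\theta(\cdot,[\cdot,\cdot,\cdot])$. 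We derive it from representation axiom 1, combined with axiom 2 and the cyclic identity in $T$ to rearrange the brackets.
\end{enumerate}

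The expected obstacle is this last case, the outer-slot verification. It requires expressing $D$ in terms of $\theta$ and carefully matching the terms using axiom 1 twice with permuted arguments. If that matching becomes unwieldy, the fallback is to note that these are precisely the standard identities defining a representation of a Lie triple system, as in Yamaguti, and to check the outer case by a direct expansion.
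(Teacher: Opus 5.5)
The paper gives no proof of this proposition; it is the standard fact that the two representation axioms are exactly what make $T\oplus V$ a Lie triple system. Your direct verification is the natural argument, and it is correct. Axioms 1 and 2 check out. In the cases where the $V$-part sits in $Z$, $X$ or $Y$, your identities are exactly what the $V$-components give. The $Z$-case follows by antisymmetrizing representation axiom 2 in $(z,t)$, the $X$-case is a relabelling of axiom 2, and the $Y$-case follows from the $X$-case by antisymmetry.

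Two points should be tightened.

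\textbf{The outer-slot case is immediate.} The case you flag as the main obstacle needs no extra work. Take $U=(u,\mu)$, $V'=(v,0)$, $X=(x,0)$, $Y=(y,0)$, $Z=(z,0)$. The $V$-component of the left side is $\theta(v,[x,y,z])\mu$. The right side gives $\bigl(\theta(y,z)\theta(v,x)-\theta(x,z)\theta(v,y)+D(x,y)\theta(v,z)\bigr)\mu$. So the required identity is
\[
\theta(v,[x,y,z])=\theta(y,z)\theta(v,x)-\theta(x,z)\theta(v,y)+D(x,y)\theta(v,z),
\]
which is literally representation axiom 1 under the relabelling $(x,y,z,t)\mapsto(v,x,y,z)$. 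No use of axiom 2, of the cyclic identity in $T$, or of axiom 1 ``twice with permuted arguments'' is needed. A $V$-part in $V'$ then follows by antisymmetry in the outer pair, just as you obtained the $Y$-case from the $X$-case. You should write this computation out rather than leave it as a sketch with a fallback.

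\textbf{Justify the reduction to one $V$-part.} Your reduction to ``exactly one $V$-part'' silently uses the fact that the terms with two or more $V$-parts drop out. State the reason: a bracket with at least two entries in $0\oplus V$ is zero, and a bracket with exactly one such entry lies in $0\oplus V$. Hence every nested term with two or more pure-$V$ arguments vanishes on both sides of the derivation identity.
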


Specializing the above result with $(\theta,T) $ being the adjoint representation, we obtain the following result. 

\begin{prop}
    Let $T$ be a Lie triple system. A linear map $P:T\to T$ is a Rota-Baxter operator if and only if $Gr(P)=  \left\{ (Px,x), x\in T\right\}  $ is a subsystem of $T\oplus T$, the semi-direct product of $T$ with its adjoint representation.
\end{prop}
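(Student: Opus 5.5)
The plan is a direct computation in the semi-direct product $T\oplus T$ with the adjoint representation $\theta(x,y)(z)=[z,x,y]$. First I will write out the bracket $[\cdot,\cdot,\cdot]_T$ on $T\oplus T$ explicitly. Since $D(x,y)=\theta(y,x)-\theta(x,y)$, we have
$$D(x,y)(w)=[w,y,x]-[w,x,y].$$
By skew-symmetry and the cyclic identity this equals $[x,y,w]$. Indeed,
$$[w,y,x]-[w,x,y]=-[y,w,x]-[w,x,y]=[x,y,w].$$
Hence for $(a,u),(b,v),(c,w)\in T\oplus T$,
$$[(a,u),(b,v),(c,w)]_T=\bigl([a,b,c],\,[u,b,c]-[v,a,c]+[a,b,w]\bigr)=\bigl([a,b,c],\,[u,b,c]+[a,v,c]+[a,b,w]\bigr).$$
The last equality again uses skew-symmetry in the first two slots.

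Next I specialise to three elements of the graph, taking $a=Px$, $b=Py$, $c=Pz$ and $u=x$, $v=y$, $w=z$. This gives
$$[(Px,x),(Py,y),(Pz,z)]_T=\bigl([Px,Py,Pz],\,[x,Py,Pz]+[Px,y,Pz]+[Px,Py,z]\bigr).$$
An element $(a,u)$ of $T\oplus T$ lies in $Gr(P)$ exactly when $a=Pu$. So this bracket lies in $Gr(P)$ if and only if
$$[Px,Py,Pz]=P\bigl([x,Py,Pz]+[Px,y,Pz]+[Px,Py,z]\bigr),$$
which is precisely the Rota-Baxter identity for the triple $x,y,z$.

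Both directions follow from this. The set $Gr(P)$ is always a linear subspace, because $P$ is linear. It is a subsystem exactly when it is closed under the bracket, and every triple of elements of $Gr(P)$ has the form above. Therefore closure of $Gr(P)$ under the bracket for all $x,y,z$ is equivalent to $P$ satisfying the Rota-Baxter identity for all $x,y,z$.

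The only step needing care is identifying $D(x,y)(w)$ with $[x,y,w]$ and fixing the sign of the middle term $-\theta(x,z)(v)$. Both are settled by the two Lie triple system axioms (skew-symmetry and the cyclic identity), as in the computation above. Everything else is immediate.
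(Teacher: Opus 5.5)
Your proof is correct and takes the same approach as the paper. The paper gives no separate argument beyond specializing the semi-direct product $T\oplus V$ to the adjoint representation, and your computation supplies exactly the check that specialization requires: you identify $D(x,y)(w)=[x,y,w]$ and read off that $[(Px,x),(Py,y),(Pz,z)]$ lies in $Gr(P)$ precisely when the Rota-Baxter identity holds.
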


\subsection{Connection with pre-Lie triple systems}

In \cite{M21}, Mabrouk defined a notion of pre-Lie triple system in order to extend the notion of pre-Lie algebra to the setting of triple system. As in the binary case, there is still a connection with Lie triple systems.

\begin{defi}[\cite{M21}]\label{d4.6}
    A pre-Lie triple system is a vector space $A$ equipped with a trilinear map $  \left\{ \cdot,\cdot,\cdot\right\}: A\times A\times A \to A $ such that the following identities hold: \begin{equation}\label{equ7}
        \left\{ x_5,x_1, [x_2,x_3,x_4 ]_C\right\}= \left\{\left\{ x_5,x_1,x_2\right\},x_3,x_4 \right\} -  \left\{\left\{ x_5,x_1,x_3\right\},x_2,x_4 \right\} + \left\{x_2,x_3,\left\{ x_5,x_1,x_4\right\}\right\}^{\wedge} ,
\end{equation}
\begin{equation}\label{equ8}
        \left\{x_2,x_3,\left\{ x_5,x_1,x_4\right\}\right\}^{\wedge}= \left\{\left\{ x_1,x_2,x_5\right\}^{\wedge},x_3,x_4 \right\} +\left\{ x_5, [x_1,x_2,x_3 ]_C,x_4\right\} +\left\{ x_5,x_3, [x_1,x_2,x_4 ]_C\right\} ,
    \end{equation} where $ \left\{ \cdot,\cdot,\cdot\right\}^{\wedge}$ and $[\cdot,\cdot,\cdot]_C $ are defined by \begin{align*}
        &\left\{ x,y,z\right\}^{\wedge}= \left\{ z,y,x\right\}- \left\{ z,x,y\right\} \\
& [x,y,z]_C= \left\{ x,y,z\right\} -\left\{y,x,z\right\}+\left\{z,y,x\right\}-\left\{ z,x,y\right\}= \left\{x,y,z\right\}^{\wedge} + \left\{x,y,z\right\}-\left\{ y,x,z\right\} 
    \end{align*} for all $x,y,z,x_i\in A$, $1\leq i\leq 5$.
\end{defi}

\begin{ex}[\cite{M21}]\label{ex4.7}
Let A be a $3$-dimensional vector space generated by $e_1,e_2,e_3  $ equipped with a ternary bracket $\left\{\cdot,\cdot,\cdot \right\} $ defined by $\left\{e_3,e_3,e_3 \right\}=e_2$ and all the other brackets are zero. Then $(A,\left\{\cdot,\cdot,\cdot \right\}) $ is a pre-Lie triple system. 
\end{ex}

\begin{ex}[\cite{M21}]
Let A be a $4$-dimensional vector space generated by $ e_1,e_2,e_3,e_4 $ equipped with a ternary bracket $\left\{\cdot,\cdot,\cdot \right\} $ defined by $$\left\{e_4,e_2,e_3 \right\}=e_2, \left\{e_4,e_2,e_2 \right\}=e_1, \left\{e_3,e_2,e_4 \right\}=\left\{e_4,e_4,e_4 \right\}= -e_2$$ and all the other brackets are zero. Then $(A,\left\{\cdot,\cdot,\cdot \right\}) $ is a pre-Lie triple system.
\end{ex}

\begin{prop}[\cite{M21}]
    Let $(A,\left\{ \cdot,\cdot,\cdot\right\}) $ be a pre-Lie triple system. Then $(A,[\cdot,\cdot,\cdot]_C) $ is a Lie triple system. 
\end{prop}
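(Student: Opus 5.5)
We verify the three axioms of a Lie triple system for $[x,y,z]_C=\{x,y,z\}-\{y,x,z\}+\{z,y,x\}-\{z,x,y\}$ directly from the two defining identities \eqref{equ7} and \eqref{equ8}. The plan is to get the two linear axioms from the shape of $[\cdot,\cdot,\cdot]_C$ alone, and then obtain the fundamental identity by combining \eqref{equ7} and \eqref{equ8}.

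\textbf{Skew-symmetry.} Exchanging $x$ and $y$ in the formula, the first two terms become $\{y,x,z\}-\{x,y,z\}$. The last two become $\{z,x,y\}-\{z,y,x\}$. Hence $[y,x,z]_C=-[x,y,z]_C$ with no use of the axioms.

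\textbf{Cyclic identity.} We expand $[x,y,z]_C+[y,z,x]_C+[z,x,y]_C$ into twelve terms of the form $\{a,b,c\}$. Each permutation of $(x,y,z)$ should appear exactly twice, once with each sign, so the sum cancels identically. This is again a purely formal check.

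\textbf{Fundamental identity.} We must show
$$[u,v,[x,y,z]_C]_C=[[u,v,x]_C,y,z]_C+[x,[u,v,y]_C,z]_C+[x,y,[u,v,z]_C]_C.$$
Introduce the operators $L(a,b)c=\{a,b,c\}$ and $R(a,b)c=\{c,a,b\}$ (suitably ordered). Then $[a,b,c]_C=L(a,b)c-L(b,a)c+R(b,a)c-R(a,b)c$, with $D_C(a,b)=[a,b,\cdot]_C$ the inner derivation to be checked. The fundamental identity says $D_C(u,v)$ is a derivation of $[\cdot,\cdot,\cdot]_C$. Following the pattern of \cite{M21}, we reformulate the axioms:
\begin{itemize}
\item \eqref{equ7} is the statement that the map $\theta(x_3,x_4)(x_5)=\{x_5,\cdot,\cdot\}$-type operators satisfy a compatibility with $[\cdot,\cdot,\cdot]_C$;
\item \eqref{equ8} is the companion identity.
\end{itemize}
Together they should amount to the following: the pair $(A,\theta)$, with $\theta(y,z)x=\{x,y,z\}$ or the appropriate ordering, is a representation of $(A,[\cdot,\cdot,\cdot]_C)$, and $D(x,y)$ corresponds to the left action $\{x,y,\cdot\}-\{y,x,\cdot\}+\dots$.

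Concretely, we write each term of the fundamental identity as a sum of $4\times 4=16$ compositions of $\{\cdot,\cdot,\cdot\}$. We then group these terms according to which argument is nested, namely inner bracket in the third slot or in the first slot. The nested-third-slot terms are rewritten by \eqref{equ7}, with appropriate relabelings $x_5,x_1$ chosen among $u,v,x,y,z$. The $\{\cdot,\cdot,\{\cdot,\cdot,\cdot\}\}^{\wedge}$ terms this produces are then eliminated using \eqref{equ8}. After all substitutions, the remaining terms should cancel pairwise, using skew-symmetry in the first two slots of $[\cdot,\cdot,\cdot]_C$.

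The main obstacle is this bookkeeping: choosing the correct instances (permutations of the five variables) of \eqref{equ7} and \eqref{equ8} so that all the nested terms are covered and cancel. To organize the search, we first try to prove two intermediate operator identities:
\begin{itemize}
\item $[D_C(u,v),L(x,y)]=L([u,v,x]_C,y)+L(x,[u,v,y]_C)$;
\item the analogue of this identity for $R$.
\end{itemize}
Each should follow from one instance of \eqref{equ7} and one of \eqref{equ8}. The fundamental identity is then their signed sum.
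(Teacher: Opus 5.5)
\textbf{Verdict: there is a genuine gap.} Your skew-symmetry and cyclic-identity checks are fine; both are formal consequences of the shape of $[\cdot,\cdot,\cdot]_C$. The gap is the fundamental identity, which is the whole content of the statement and which you leave as bookkeeping that ``should'' cancel.

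\textbf{Your intermediate identities are false.} With $L(a,b)c=\{a,b,c\}$ and $R(a,b)c=\{c,a,b\}$, each of your two identities is equivalent to saying that $D_C(u,v)=[u,v,\cdot]_C$ is a derivation of $\{\cdot,\cdot,\cdot\}$. This fails in general. Take the left-symmetric algebra $\mathbf{K}[t]$ with $f\circ g=fg'$ and the pre-Lie triple system $\{x,y,z\}=z\circ(y\circ x)$. Then $[u,v,w]_C=[[u,v],w]$ with $[f,g]=fg'-f'g$. Since $[t,t^2]=t^2$, we get $D_C(t,t^2)h=t^2h'-2th$. For $x=t^2$, $y=z=1$ we have $\{x,y,z\}=2$, so $D_C(t,t^2)\{x,y,z\}=-4t$. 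On the other hand, $\{D_Cx,y,z\}+\{x,D_Cy,z\}+\{x,y,D_Cz\}=0-8t-4t=-12t$. This is the ternary shadow of the fact that $\ad_{[u,v]}$ is not a derivation of a pre-Lie product. So no signed sum of your two identities can yield the fundamental identity.

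\textbf{The right operator and axioms.} The operator that enters the axioms is $D(u,v)=\theta(v,u)-\theta(u,v)$, where $\theta(a,b)c=\{c,a,b\}$. Thus $D(u,v)c=\{u,v,c\}^{\wedge}$, $D_C(u,v)w=D(u,v)w+\{u,v,w\}-\{v,u,w\}$, and $[a,b,c]_C=\theta(b,c)a-\theta(a,c)b+D(a,b)c$. In this notation the two axioms read
\begin{align*}
\theta(x_1,[x_2,x_3,x_4]_C)&=\theta(x_3,x_4)\theta(x_1,x_2)-\theta(x_2,x_4)\theta(x_1,x_3)+D(x_2,x_3)\theta(x_1,x_4),\\
[D(x_1,x_2),\theta(x_3,x_4)]&=\theta([x_1,x_2,x_3]_C,x_4)+\theta(x_3,[x_1,x_2,x_4]_C).
\end{align*}
The first is \eqref{equ7}. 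The second is \eqref{equ8} once a misprint is corrected: the left side of \eqref{equ8} as printed should be $\{x_1,x_2,\{x_5,x_3,x_4\}\}^{\wedge}$. Both $\mathcal{A}Sp_{\{3\}}(r_2)$ and the second representation axiom give this form, so you must work with the corrected version.

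\textbf{How the fundamental identity then follows.} Expand
\[
[u,v,[x,y,z]_C]_C-[[u,v,x]_C,y,z]_C-[x,[u,v,y]_C,z]_C-[x,y,[u,v,z]_C]_C
\]
with the $\theta$-formula. Expand an inner bracket only when it is the element acted upon, not when it sits inside $\theta(\cdot,\cdot)$ or $D(\cdot,\cdot)$. Then sort terms by which of $u,v,x,y,z$ is acted upon:
\begin{itemize}
\item The $u$-terms and $v$-terms are \eqref{equ7} with $x_1=v$, resp.\ $x_1=u$, and $(x_2,x_3,x_4)=(x,y,z)$.
\item The $x$-terms and $y$-terms are the corrected \eqref{equ8} with $(x_1,x_2)=(u,v)$ and $(x_3,x_4)=(y,z)$, resp.\ $(x,z)$.
\item The $z$-terms are $[D(u,v),D(x,y)]-D([u,v,x]_C,y)-D(x,[u,v,y]_C)$. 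This vanishes because it is the corrected \eqref{equ8} at $(x_3,x_4)=(y,x)$ minus the corrected \eqref{equ8} at $(x_3,x_4)=(x,y)$.
\end{itemize}
This is the precise sense in which ``$\theta$ is a representation'' implies the fundamental identity. Note that \eqref{equ7} is what handles the $u$- and $v$-terms.
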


\begin{prop}
    Let $(A,\left\{ \cdot,\cdot,\cdot\right\}) $ be a pre-Lie triple system. The bilinear map $\theta:T\times T\to \mathrm{End}(T)$ defined by $\theta(x,y)(z) = \left\{ z,x,y\right\} $ is a representation of the induced Lie triple system.
\end{prop}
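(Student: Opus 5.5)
The plan is to compute directly. I will first write out $\theta$ and the associated $D$ in terms of the pre-Lie triple bracket, and then match each of the two representation axioms with the defining identities \eqref{equ7} and \eqref{equ8} of \hyperref[d4.6]{Definition~\ref*{d4.6}}. The Lie triple system in question is $(A,[\cdot,\cdot,\cdot]_C)$, which is a Lie triple system by the preceding proposition of \cite{M21}. Only the two axioms in the definition of a representation remain to be checked. First, for $w\in A$,
$$D(x,y)(w)=\theta(y,x)(w)-\theta(x,y)(w)=\left\{w,y,x\right\}-\left\{w,x,y\right\}=\left\{x,y,w\right\}^{\wedge},$$
so $D$ is exactly the operation $\left\{\cdot,\cdot,\cdot\right\}^{\wedge}$ applied with $w$ in the last slot. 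This observation is what makes the identification possible.

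For the first axiom I evaluate each term on $w\in A$:
$\theta(z,t)\theta(x,y)w=\left\{\left\{w,x,y\right\},z,t\right\}$,
$\theta(y,t)\theta(x,z)w=\left\{\left\{w,x,z\right\},y,t\right\}$,
$\theta(x,[y,z,t]_C)w=\left\{w,x,[y,z,t]_C\right\}$,
and $D(y,z)\theta(x,t)w=\left\{y,z,\left\{w,x,t\right\}\right\}^{\wedge}$. The resulting identity is precisely \eqref{equ7} under the substitution $x_5=w$, $x_1=x$, $x_2=y$, $x_3=z$, $x_4=t$, so the first axiom holds.

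For the second axiom I similarly evaluate on $w$. This gives
$$\left\{\left\{x,y,w\right\}^{\wedge},z,t\right\}-\left\{x,y,\left\{w,z,t\right\}\right\}^{\wedge}+\left\{w,[x,y,z]_C,t\right\}+\left\{w,z,[x,y,t]_C\right\}=0 .$$
Substituting $x_5=w$, $x_1=x$, $x_2=y$, $x_3=z$, $x_4=t$ in \eqref{equ8} produces the first, third and fourth terms. However, its left-hand side is $\left\{y,z,\left\{w,x,t\right\}\right\}^{\wedge}$ rather than the required $\left\{x,y,\left\{w,z,t\right\}\right\}^{\wedge}$.

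The main obstacle is therefore to show that these two $\wedge$-terms agree, or rather that their difference vanishes once the remaining terms are accounted for. My first attempt will be to rewrite both $\wedge$-terms using \eqref{equ7}. That identity expresses $\left\{x_2,x_3,\left\{x_5,x_1,x_4\right\}\right\}^{\wedge}$ through $\left\{x_5,x_1,[x_2,x_3,x_4]_C\right\}$ and two iterated brackets. I will apply it once with $(x_1,x_2,x_3)=(x,y,z)$ and once with the roles of $x$ and $z$ rearranged. I will then compare the outcomes, using the antisymmetry and cyclic identity of $[\cdot,\cdot,\cdot]_C$ together with the explicit formula $[x,y,z]_C=\left\{x,y,z\right\}^{\wedge}+\left\{x,y,z\right\}-\left\{y,x,z\right\}$. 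If this bookkeeping does not close, I will fall back on fully expanding every $\wedge$ and $[\cdot,\cdot,\cdot]_C$ into the basic bracket $\left\{\cdot,\cdot,\cdot\right\}$. The check then reduces to a linear combination of the polarized forms of \eqref{equ7} and \eqref{equ8}, which is a finite verification. I expect the sign and permutation conventions between the representation axioms and Mabrouk's identities to be the only real difficulty.
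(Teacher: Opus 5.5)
Your treatment of the first axiom is the paper's argument; the paper's entire proof is the remark that \eqref{equ7} and \eqref{equ8} \emph{are} the two representation axioms. The mismatch you found in the second axiom is real. The displayed \eqref{equ8} has left-hand side $\{x_2,x_3,\{x_5,x_1,x_4\}\}^{\wedge}$, whereas $D(x,y)\theta(z,t)(w)$ requires $\{x_1,x_2,\{x_5,x_3,x_4\}\}^{\wedge}$.

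This is a misprint in the paper. Its own computation of $\mathcal{A}Sp_{\{3\}}(r_2)$, after renaming $x_3\to x_5$, $x_4\to x_3$, $x_5\to x_4$, gives
\[
\{x_1,x_2,\{x_5,x_3,x_4\}\}^{\wedge}=\{\{x_1,x_2,x_5\}^{\wedge},x_3,x_4\}+\{x_5,[x_1,x_2,x_3]_C,x_4\}+\{x_5,x_3,[x_1,x_2,x_4]_C\}.
\]
This is your second axiom term by term. The displayed version already fails for Mabrouk's example $\{x,y,z\}=z\circ(y\circ x)=zyx$ in an associative algebra: in your variables its left side is $[y,z]txw$ and its right side is $[x,y]tzw$. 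With the intended identity, your direct matching finishes the proof and coincides with the paper's.

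The gap is in your proposed repair. It never closes the second axiom, and it relies on the wrong tool. Rewriting both $\wedge$-terms with \eqref{equ7} and using the Lie triple system identities of $[\cdot,\cdot,\cdot]_C$ cannot show $\{y,z,\{w,x,t\}\}^{\wedge}=\{x,y,\{w,z,t\}\}^{\wedge}$. The associative example satisfies \eqref{equ7} and makes $[\cdot,\cdot,\cdot]_C=[[\cdot,\cdot],\cdot]$ a Lie triple system, yet these two terms are $[y,z]txw$ and $[x,y]tzw$.

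If you insist on the displayed \eqref{equ8} as the axiom, the needed equality comes from \eqref{equ8} itself, not from \eqref{equ7}. Its right-hand side is antisymmetric in $(x_1,x_2)$, since $\{\cdot,\cdot,\cdot\}^{\wedge}$ and $[\cdot,\cdot,\cdot]_C$ are antisymmetric in their first two arguments. Meanwhile $F(x_1,x_2,x_3):=\{x_2,x_3,\{x_5,x_1,x_4\}\}^{\wedge}$ is antisymmetric in $(x_2,x_3)$. Hence $F(x_3,x_1,x_2)=-F(x_1,x_3,x_2)=F(x_1,x_2,x_3)$, i.e.\ $\{x_1,x_2,\{x_5,x_3,x_4\}\}^{\wedge}=\{x_2,x_3,\{x_5,x_1,x_4\}\}^{\wedge}$, and the second axiom follows. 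The literal identity is simply strictly stronger than the axiom, which is why it excludes the associative example.

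Without one of these two observations, your ``finite verification'' fallback remains a promissory note, and the proposal does not establish the second axiom.
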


\begin{proof}
    Equations \eqref{equ7} and \eqref{equ8} are exactly the condition for $\theta $ to be a representation of $(A,[\cdot,\cdot,\cdot]_C) $.
\end{proof}

\begin{prop}[\cite{M21}]
    Let $(A,\circ)$ be a pre-Lie algebra. Define a ternary operation $\left\{\cdot,\cdot,\cdot\right\}: A\times A\times A\to A $ on $A$ by \begin{center}
        $\left\{ x,y,z\right\}= z\circ (y\circ x) $, $\forall x,y,z\in A $.
    \end{center} Then $(A,\left\{\cdot,\cdot,\cdot\right\}) $ is a pre-Lie triple system.
\end{prop}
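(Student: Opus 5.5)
The plan is to reduce everything to the representation-theoretic reformulation of the pre-Lie triple system axioms. The proof of the preceding proposition says that identities \eqref{equ7} and \eqref{equ8} are exactly the conditions for $\theta(x,y)(z)=\left\{z,x,y\right\}$ to be a representation of $(A,[\cdot,\cdot,\cdot]_C)$. So it suffices to compute $[\cdot,\cdot,\cdot]_C$ and $\theta$ explicitly for the bracket $\left\{x,y,z\right\}=z\circ(y\circ x)$, and then check the two representation identities. I will read that equivalence in the direction "$\theta$ is a representation $\Rightarrow$ \eqref{equ7}, \eqref{equ8}" and take care to match the variable substitutions $x_1,\dots,x_5$ when invoking it.

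\textbf{Step 1: compute the derived operations.} Write $L_x(y)=x\circ y$ and $[x,y]=x\circ y-y\circ x$.
\begin{itemize}
\item Directly from the definition, $\left\{x,y,z\right\}-\left\{y,x,z\right\}=-z\circ[x,y]$.
\item Using the left-symmetry identity \eqref{eq 1}, $\left\{x,y,z\right\}^{\wedge}=x\circ(y\circ z)-y\circ(x\circ z)=[x,y]\circ z$.
\end{itemize}
Adding these gives $[x,y,z]_C=[[x,y],z]$. Hence the induced Lie triple system is the one coming from the commutator Lie algebra of $(A,\circ)$, and moreover $\left\{x,y,z\right\}^{\wedge}=L_{[x,y]}z$. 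Finally $\theta(x,y)=L_yL_x$, so $D(x,y)=L_xL_y-L_yL_x=L_{[x,y]}$; the last equality holds because $L$ is a representation of the induced Lie algebra, by the proposition on pre-Lie algebras.

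\textbf{Step 2: verify the two representation identities.} Write $\rho=L$, a Lie algebra representation. The task is the general fact that $\theta(x,y)=\rho(y)\rho(x)$ is a Lie triple system representation of the induced triple system.

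For the first identity, set $w=[y,z]$. The first two terms combine to $\rho(t)\rho([z,y])\rho(x)$, and $\theta(x,[y,z,t])=\rho([w,t])\rho(x)$. The left-hand side therefore becomes
$$\bigl(-\rho(t)\rho(w)+\rho(w)\rho(t)-\rho([w,t])\bigr)\rho(x),$$
which vanishes since $\rho$ is a Lie morphism.

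For the second identity, set $w=[x,y]$. Using $\rho(z)\rho(w)+\rho([w,z])=\rho(w)\rho(z)$, the expression reduces to
$$\bigl(\rho(t)\rho(w)-\rho(w)\rho(t)+\rho([w,t])\bigr)\rho(z)=0.$$

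\textbf{Main obstacle.} The computations above are short. The delicate point is bookkeeping: the argument relies on the claimed equivalence between \eqref{equ7}, \eqref{equ8} and the representation axioms. If that correspondence is doubtful, I would fall back on direct verification. Substituting $\left\{a,b,c\right\}=L_cL_b a$ and $\left\{a,b,c\right\}^{\wedge}=L_{[a,b]}c$ into \eqref{equ7} and \eqref{equ8} turns each side into a word in the operators $L_{x_i}$ applied to a single element. The same two cancellations from Step 2 then finish the proof: the Lie morphism property $L_{[a,b]}=[L_a,L_b]$ and the identity $[x,y,z]_C=[[x,y],z]$.
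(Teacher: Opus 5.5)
Your Steps 1 and 2 are correct: $\{x,y,z\}^{\wedge}=L_{[x,y]}z$, $[x,y,z]_C=[[x,y],z]$, $\theta(x,y)=L_yL_x$, $D(x,y)=L_{[x,y]}$, and both representation identities reduce to $L_{[w,t]}=[L_w,L_t]$. The paper gives no proof of this statement; it only cites \cite{M21}.

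The gap is the step you deferred: going from ``$\theta$ is a representation'' to \eqref{equ7} and \eqref{equ8}. You cannot simply quote the paper's one-line remark, which in any case is stated in the opposite direction. Apply the first representation identity to the vector $x_5$ with $(x,y,z,t)=(x_1,x_2,x_3,x_4)$, and unwind $\theta(a,b)(c)=\{c,a,b\}$ and $D(a,b)(c)=\{a,b,c\}^{\wedge}$. This gives \eqref{equ7} verbatim. The second identity, treated the same way, gives
\[
\{x_1,x_2,\{x_5,x_3,x_4\}\}^{\wedge}=\{\{x_1,x_2,x_5\}^{\wedge},x_3,x_4\}+\{x_5,[x_1,x_2,x_3]_C,x_4\}+\{x_5,x_3,[x_1,x_2,x_4]_C\}.
\]
Its left-hand side differs from the one printed in \eqref{equ8}, which is $\{x_2,x_3,\{x_5,x_1,x_4\}\}^{\wedge}$ (the last term of \eqref{equ7}). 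The printed \eqref{equ8} does not follow from the representation axioms. Consequently, your fallback claim that direct substitution ``finishes the proof'' is false for it. Take an associative algebra, where $\{a,b,c\}=cba$ and $\{a,b,c\}^{\wedge}=[a,b]c$. There the printed \eqref{equ8} reduces to $[x_2,x_3]x_4x_1x_5=[x_1,x_2]x_4x_3x_5$. This fails in $M_2(\mathbf{K})$ for $x_1=x_4=x_5=1$, $x_2=E_{12}$, $x_3=E_{21}$.

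So \eqref{equ8} as printed is a misprint, and the statement is true only with the displayed identity in its place. That corrected identity is exactly what the paper's own computation of $\mathcal{A}Sp_{\{3\}}(r_2)$ gives after renaming $x_3\mapsto x_5$, $x_4\mapsto x_3$, $x_5\mapsto x_4$.

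To close the gap, state explicitly that you are proving the statement for this corrected axiom. Then write out the two one-line substitutions above instead of citing the ``exactly the condition'' remark. With that, your argument is complete. Its content is the general fact that $\theta(x,y)=\rho(y)\rho(x)$ is a representation of the induced Lie triple system $[[\cdot,\cdot],\cdot]$ for any Lie algebra representation $\rho$, applied here to $\rho=L$.
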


The situation can be summarized in the following diagram, which is commutative. \[\begin{tikzcd}[row sep=large, column sep=large]
	{\text{pre-Lie}} && \text{Lie} \\
	{ \text{pre-Lie triple system} } && \text{Lts}
	\arrow["{[x,y]=x\circ y - y\circ x}", from=1-1, to=1-3]
	\arrow["{\left\{ x,y,z\right\}= z\circ (y\circ x)}"', from=1-1, to=2-1]
	\arrow["{[x,y,z]=[[x,y],z]}", from=1-3, to=2-3]
	\arrow["{[x,y,x]_C}"', from=2-1, to=2-3]
\end{tikzcd}\]

\begin{prop}[\cite{M21}]
    Let $(T,[\cdot,\cdot,\cdot],P)$ be a Lie triple system endowed with a Rota-Baxter operator. Then one can define on $T$ a pre-Lie triple system structure by setting $\left\{x,y,z \right\}=[x,Py,Pz] $.
\end{prop}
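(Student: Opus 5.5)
The plan is to verify the two defining identities \eqref{equ7} and \eqref{equ8} directly for $\{x,y,z\}=[x,Py,Pz]$, after first computing the two derived operations $\{\cdot,\cdot,\cdot\}^{\wedge}$ and $[\cdot,\cdot,\cdot]_C$ in terms of the original bracket.

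\textbf{The derived operations.} By definition,
\[
\{x,y,z\}^{\wedge}=[z,Py,Px]-[z,Px,Py].
\]
Using skew-symmetry in the first two slots and the cyclic identity, this becomes
\[
[Py,z,Px]+[z,Px,Py]\cdot(-1)=[Px,Py,z].
\]
Indeed, $[z,Py,Px]+[Py,Px,z]+[Px,z,Py]=0$ gives $[z,Py,Px]-[z,Px,Py]=-[Py,Px,z]=[Px,Py,z]$. It then follows that
\[
[x,y,z]_C=[x,Py,Pz]+[Px,y,Pz]+[Px,Py,z],
\]
since $-\{y,x,z\}=-[y,Px,Pz]=[Px,y,Pz]$. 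The Rota-Baxter identity therefore reads
\[
P[x,y,z]_C=[Px,Py,Pz].
\]
This is the key substitution rule.

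\textbf{Identity \eqref{equ7}.} The left side is
\[
[x_5,Px_1,P[x_2,x_3,x_4]_C]=[x_5,Px_1,[Px_2,Px_3,Px_4]].
\]
The right side is
\[
[[x_5,Px_1,Px_2],Px_3,Px_4]-[[x_5,Px_1,Px_3],Px_2,Px_4]+[Px_2,Px_3,[x_5,Px_1,Px_4]].
\]
Setting $a=x_5$ and $b_i=Px_i$, this becomes a pure Lie triple system identity in $a,b_1,\dots,b_4$. It follows from axiom 3 applied to $[b_2,b_3,[a,b_1,b_4]]$, combined with the derivation property rewritten via the standard consequence
\[
[a,b,[c,d,e]]=[[a,b,c],d,e]+\cdots .
\]
Concretely, I would expand $[b_2,b_3,[a,b_1,b_4]]$ by axiom 3 and expand $[a,b_1,[b_2,b_3,b_4]]$ by axiom 3. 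The difference then reduces to the identity
\[
[[b_2,b_3,a],b_1,b_4]+[a,[b_2,b_3,b_1],b_4]=[[a,b_1,b_2],b_3,b_4]-[[a,b_1,b_3],b_2,b_4]+\dots,
\]
which should follow from the well-known Lie triple system identity
\[
[[a,b,c],d,e]-[[a,b,d],c,e]=\dots
\]
(equivalently, that $D(u,v)=[u,v,\cdot]$ acts as a derivation and $[[x,y,z],w,\cdot]$ relations hold). The cleanest route is to embed $T$ in its standard embedding $L_S(T)$, where $[x,y,z]=[[x,y],z]$, and verify everything via the Jacobi identity.

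\textbf{Identity \eqref{equ8}.} Using the formula $\{x,y,z\}^{\wedge}=[Px,Py,z]$, the left side is
\[
[Px_2,Px_3,[x_5,Px_1,Px_4]].
\]
The right side is
\[
[[Px_1,Px_2,x_5],Px_3,Px_4]+[x_5,P[x_1,x_2,x_3]_C,Px_4]+[x_5,Px_3,P[x_1,x_2,x_4]_C].
\]
Applying the substitution rule $P[\,\cdot\,]_C=[P\cdot,P\cdot,P\cdot]$ again reduces this to a polynomial identity in $a=x_5$ and the $b_i$. That identity is the derivation axiom 3 for $D(b_2,b_3)$ acting on $[a,b_1,b_4]$, after rewriting terms with skew-symmetry.

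\textbf{Main obstacle.} The main difficulty is bookkeeping: matching these ternary identities exactly to axiom 3 and its known consequences, with the right signs and index orders. I would handle it by systematically passing to $L_S(T)$ and checking with the Jacobi identity whenever a direct match is unclear. A quicker alternative is to note that $\theta(x,y)=\operatorname{ad}$-type maps give a representation: the conditions \eqref{equ7} and \eqref{equ8} are precisely the representation axioms for $\theta(x,y)z=[z,Px,Py]$ on $(T,[\cdot,\cdot,\cdot]_C)$. This is the adjoint representation pulled back along the morphism $P$, which is a morphism by the substitution rule. Representations pull back along morphisms, which gives the result with no computation.
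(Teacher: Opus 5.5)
Your preliminary computations are correct: $\{x,y,z\}^{\wedge}=[Px,Py,z]$, $[x,y,z]_C=[x,Py,Pz]+[Px,y,Pz]+[Px,Py,z]$, and the Rota--Baxter identity is exactly $P[x,y,z]_C=[Px,Py,Pz]$. The step that fails is your verification of \eqref{equ8}. After substituting $a=x_5$ and $b_i=Px_i$, you must prove
\[
[b_2,b_3,[a,b_1,b_4]]=[[b_1,b_2,a],b_3,b_4]+[a,[b_1,b_2,b_3],b_4]+[a,b_3,[b_1,b_2,b_4]],
\]
and you assert this is axiom 3 for $D(b_2,b_3)$ acting on $[a,b_1,b_4]$. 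It is not. The right-hand side is the axiom-3 expansion of $[b_1,b_2,[a,b_3,b_4]]$, so what you actually need is $[b_2,b_3,[a,b_1,b_4]]=[b_1,b_2,[a,b_3,b_4]]$. That is not an identity of Lie triple systems, and no use of skew-symmetry changes this. The source is a misprint in \eqref{equ8}. Take $\theta(x,y)z=\{z,x,y\}$, for which $D(x,y)w=\{x,y,w\}^{\wedge}$. Evaluating the second representation axiom at $(x_1,x_2,x_3,x_4)$ on $x_5$, or reading off $\mathcal{A}Sp_{\{3\}}(r_2)$ in the paper, gives the left-hand side $\{x_1,x_2,\{x_5,x_3,x_4\}\}^{\wedge}$, not $\{x_2,x_3,\{x_5,x_1,x_4\}\}^{\wedge}$. (The printed version already fails for $\{x,y,z\}=zyx$ in an associative algebra, where it reads $[x_2,x_3]x_4x_1x_5=[x_1,x_2]x_4x_3x_5$.) With the corrected left-hand side, your substitution turns \eqref{equ8} into exactly axiom 3 with $(u,v)=(Px_1,Px_2)$. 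You named sign and index bookkeeping as the main obstacle, but you asserted the match instead of carrying it out, and that is precisely where the argument breaks.

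Two further remarks. For \eqref{equ7}, your detour is unnecessary: expanding $[b_2,b_3,[a,b_1,b_4]]$, leaving an identity with ``$\dots$'', and passing to $L_S(T)$. After substitution, \eqref{equ7} is axiom 3 with $(u,v)=(x_5,Px_1)$ together with $[b_2,[a,b_1,b_3],b_4]=-[[a,b_1,b_3],b_2,b_4]$. Your ``quicker alternative'' is the right proof once it is made explicit. Set $\theta_P(x,y)=\theta_{\mathrm{ad}}(Px,Py)$, so that $\theta_P(x,y)z=\{z,x,y\}$ and $D_P(x,y)w=[Px,Py,w]=\{x,y,w\}^{\wedge}$. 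The representation axioms for $\theta_P$ over $[\cdot,\cdot,\cdot]_C$ are then those of $\theta_{\mathrm{ad}}$ evaluated at $(Px_1,\dots,Px_4)$. This uses only $P[x,y,z]_C=[Px,Py,Pz]$; you do not need to know beforehand that $[\cdot,\cdot,\cdot]_C$ is a Lie triple system. Evaluated on $x_5$, these axioms give \eqref{equ7} and the \emph{corrected} \eqref{equ8}. So this route proves the intended statement, but not \eqref{equ8} as printed. The paper gives no proof of this result (it is quoted from Mabrouk), so there is no argument of the paper's to compare with.
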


The situation is as follows. From a Lie triple system $T$ endowed with a Rota-Baxter operator $P$ we obtain a structure of pre-Lie triple system on $T$ and then the map $P: (T,[\cdot,\cdot,\cdot]_C)\to (T,[\cdot,\cdot,\cdot]) $ is a morphism of Lie triple systems. 

\[\begin{tikzcd}[row sep=large, column sep=large]
	{(T,[\cdot,\cdot,\cdot])} & {(T,\left\{\cdot,\cdot,\cdot \right\})} \\
	& {(L,[\cdot,\cdot,\cdot]_C)}
	\arrow["{[x,Py,Pz]}", from=1-1, to=1-2]
	\arrow[from=1-2, to=2-2]
	\arrow["P", from=2-2, to=1-1]
\end{tikzcd}\]

\subsection{The operad of pre-Lie triple systems is the splitting of the operad of Lie triple systems}\label{sub4}

Here we give another justification and interpretation for the definition of pre-Lie triple systems from operad theory perspective. We use the method developed in \cite{PBG13} to split the operad of Lie triple systems which we define below. 

\subsubsection{Basics on operads}

We provide  here some basics and constructions about the operads. For a general theory of operads, the reader is referred  to \cite{LV12}. In order to describe and study a type of algebra, instead of considering the elementary operations defining this type of algebra and the relations between these operations, we consider all the operations on a finite number of variables of such an algebra and all the relations between these operations. A convenient way to view elements of an operad is as operations in the corresponding type of algebra. 

    A non-symmetric operad is a collection $(\mathcal{P}(n))_{n\in \mathbf{N}} $ of vector spaces and, for all $k\in \mathbf{N}, n_1,\ldots,n_k\in \mathbf{N} $,   $(k+1) $-linear maps $\circ$ : $$\circ: \left\{\begin{matrix}
\mathcal{P}(k)\times \mathcal{P} (n_1)\times \cdots \times \mathcal{P}(n_k)\longrightarrow \mathcal{P}(n_1+\cdots+ n_k) \\ (p,p_1,\ldots,p_k)\longmapsto p\circ (p_1,\ldots,p_k)
\end{matrix}\right. $$ satisfying the following properties: \begin{enumerate}
    \item For all $k\in \mathbf{N},n_1,\ldots,n_k\in \mathbf{N},n_i^j\in \mathbf{N},(1\leq i\leq k,1\leq j\leq n_i) $ \\ and for all $p\in \mathcal{P}(k),p_i\in \mathcal{P}(n_i), p_i^j\in \mathcal{P}(n_i^j):  $ $$p\circ (p_1\circ (p_1^1,\ldots,p_1^{n_1}),\ldots,p_k\circ(p_k^1,\ldots,p_k^{n_k}))=(p\circ (p_1,\ldots,p_k))\circ (p_1^1,\ldots,p_1^{n_1},\ldots,p_k^1,\ldots,p_k^{n_k}) $$
    \item There exists $I\in \mathcal{P}(1) $ such that, for all $n\in \mathbf{N}, p\in \mathcal{P}(n)$: $$p\circ (I,\ldots,I)=p, \ I\circ p=p. $$
\end{enumerate}

An operad is a non-symmetric operad $\mathcal{P} $ such that, for all $n\in \mathbf{N}$, $\mathcal{P}(n)$ is a right $\mathbb{S}_n$-module, where $\mathbb{S}_n$ is the symmetric group,  and this action is compatible, in a suitable way, with the composition of operations. 

An ideal $\mathcal{I} $ of $\mathcal{P} $ is a $\mathbb{S} $-submodule $(\mathcal{I}(n))_{n\in \mathbf{N}} $ of $\mathcal{P} $ such that, for all $p\in \mathcal{P}(k), p_i\in \mathcal{P}(n_i)$, if $p $ or one of the $p_i $ lies in $\mathcal{I}$, then $p\circ (p_1,\ldots,p_n)\in \mathcal{I} $. Given an ideal $\mathcal{I} $ of $\mathcal{P} $, we can define the quotient operad $\mathcal{P}/\mathcal{I}= \left(\frac{\mathcal{P}(n)}{\mathcal{I}(n)}\right)_{n\in \mathbf{N}} $. 

Let $\mathcal{P} $ be an operad. A $\mathcal{P} $-algebra is a vector space $V$ endowed with linear maps $$\mathcal{P}(n)\otimes V^{\otimes n}\to V ,$$ for all $n\in \mathbf{N}$, that are $\mathbb{S}_n $-equivariant and compatible with the composition. The module $\mathcal{P}(n) $ is the vector space of all possible expressions in $n$ variables for a given type of algebra and those maps are the evaluation of such an expression in $n$ elements of $V$. 

The forgetful functor from the category of operads into the category of $\mathbb{S} $-modules admits a left adjoint functor $\mathcal{P} $. For an $\mathbb{S}$-module $E$, we denote by $\mathcal{P}_E $ the free operad over $E$. It can be described in terms of trees. An element of $\mathcal{P}_E $ is a tree decorated by elements of $E$ such that a vertex $v$ of arity $n $ is decorated by an element of $E(n)$. The tree \[\begin{tikzpicture}[
  grow=up,  baseline=(current bounding box.center)
  level distance=1cm,
  sibling distance=2cm
]
\node {$p$}
  child { node {$p_2$} [sibling distance=1cm]
    child { node {$6$} }
    child { node {$5$} }
  }
  child { node {$p_1$} [sibling distance=1cm]
   child { node {$4$} }
   child {node {$3$} }
   child {node {$2$} } }
  child{ node {$1$}};
\end{tikzpicture}\] represents the composition $p\circ(I,p_1,p_2) $ with $p,p_1\in E(3)$, $p_2\in E(2)$ and $p\circ(I,p_1,p_2)\in E(6)$.

\subsubsection{The Lie triple systems operad}

We define here the operad of Lie triple systems. We consider the three operations $$X_1= [a_2,a_3,a_1], X_2= [a_3,a_1,a_2], X_3= [a_1,a_2,a_3], $$ where $X_1$ represents the operation $(a_1,a_2,a_3)\mapsto [a_2,a_3,a_1] $, etc. Let $E$ be the $\mathbf{K}$-vector space  generated by $X_1,X_2,X_3 $, on which  we define a right action of $\mathcal{S}_3$ by $X_i^\sigma=\varepsilon(\sigma) X_{\sigma^{-1}(i)}. $ This action is compatible with the skew-symmetry of the ternary product of a Lie triple system. The $\mathbb{S}$-module $\mathbf{E}$ is defined by $\mathbf{E}(n)= 0$ for $n\neq 3$ and $\mathbf{E}(3)=E$. We consider the free operad $\mathcal{P}_E$ generated by $\mathbf{E}$. It consists of all trees decorated by elements of $E$, and the action of $\mathbb{S}_n$ on $\mathcal{P}_E(n)$ is given by the permutation of the entries of the tree. It is compatible with the action on $E$; in fact, we have \[\begin{tikzpicture}[
  grow=up,  baseline=(current bounding box.center)
  level distance=1cm,
  sibling distance=1cm
]
\node {$X_i^{\sigma}$}
  child { node {$3$} [sibling distance=1cm]}
  child { node {$2$} [sibling distance=1cm] }
  child{ node {$1$}};
\end{tikzpicture} = \begin{tikzpicture}[
  grow=up,  baseline=(current bounding box.center)
  level distance=1cm,
  sibling distance=1.5cm
]
\node {$X_i$}
  child { node {$\sigma^{-1}(3)$} [sibling distance=1cm]}
  child { node {$\sigma^{-1}( 2)$} [sibling distance=1cm] }
  child{ node {$\sigma^{-1}( 1)$}};
\end{tikzpicture}. \] 

In $\mathcal{P}_E(5)$, we consider $A=X_3\circ(X_3,I,I) $ and $B= X_3\circ (I,I,X_3)$. The relations in the definition of a Lie triple system correspond to the relations $$X_1+X_2+X_3=0 $$ and $$B-A+ A^{(1\ 2\ 3)}- B^{(1\ 3)(2\ 4)}=0. $$ The operad $\mathbf{LTS}$ is $\mathcal{P}_E/R $, where $R$ is the ideal of $\mathcal{P}_E$ generated by $X_1+X_2+X_3 $ and $B-A+ A^{(1\ 2\ 3)}- B^{(1\ 3)(2\ 4)} $.

\begin{prop}
    The $\mathbf{LTS}-$algebras are the Lie triple systems.
\end{prop}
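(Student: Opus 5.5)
The plan is to use the universal property of the free operad together with the description of algebras over a quotient operad. By the adjunction defining $\mathcal{P}_E$, a $\mathcal{P}_E$-algebra structure on a vector space $V$ is the same as an $\mathbb{S}_3$-equivariant linear map $E\otimes V^{\otimes 3}\to V$. Equivalently, it is the choice of the image $\mu$ of $X_3$, a trilinear map $\mu(a_1,a_2,a_3)=[a_1,a_2,a_3]$. Equivariance then forces the images of $X_1$ and $X_2$ to be $[a_2,a_3,a_1]$ and $[a_3,a_1,a_2]$. It also imposes the symmetry encoded by the sign in the action $X_i^\sigma=\varepsilon(\sigma)X_{\sigma^{-1}(i)}$. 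Taking $\sigma=(1\ 2)$ gives $X_3^{(1\,2)}=-X_3$, which in terms of the evaluation means $[a_2,a_1,a_3]=-[a_1,a_2,a_3]$. This is axiom 1 of a Lie triple system.

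Next, a $\mathcal{P}_E/R$-algebra is a $\mathcal{P}_E$-algebra on which every element of the ideal $R$ acts by zero. Since $R$ is generated by two elements as an ideal, and the action of $\mathcal{P}_E$ is compatible with composition and with the $\mathbb{S}_n$-actions, it suffices that the two generators act by zero. The first generator $X_1+X_2+X_3$ evaluates to $[a_2,a_3,a_1]+[a_3,a_1,a_2]+[a_1,a_2,a_3]$, so its vanishing is exactly axiom 2.

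For the second generator I will evaluate each tree on $(a_1,\dots,a_5)$. With the convention that the leaves of $X_3\circ(p_1,p_2,p_3)$ are read left to right, we get $A=[[a_1,a_2,a_3],a_4,a_5]$ and $B=[a_1,a_2,[a_3,a_4,a_5]]$. The permuted trees $A^{(1\,2\,3)}$ and $B^{(1\,3)(2\,4)}$ are obtained by relabelling the leaves according to the stated action. I expect to find $B^{(1\,3)(2\,4)}=[a_3,a_4,[a_1,a_2,a_5]]$, and $A^{(1\,2\,3)}$ to be a term of the form $[a_3,[a_1,a_2,a_4],a_5]$, possibly up to a sign coming from axiom 1 and up to the direction of the cycle. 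The relation $B-A+A^{(1\,2\,3)}-B^{(1\,3)(2\,4)}=0$ then reads
$$[a_1,a_2,[a_3,a_4,a_5]]=[[a_1,a_2,a_3],a_4,a_5]+[a_3,[a_1,a_2,a_4],a_5]+[a_3,a_4,[a_1,a_2,a_5]],$$
which is axiom 3 with $u=a_1$, $v=a_2$, $x=a_3$, $y=a_4$, $z=a_5$.

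The main obstacle is the bookkeeping in this last step. The convention $X^\sigma$ acting through $\sigma^{-1}$ on leaves has to be applied consistently, and the cycle $(1\ 2\ 3)$, possibly combined with antisymmetry, must really give the middle derivation term $[a_3,[a_1,a_2,a_4],a_5]$. I will check this by writing out the relabelled leaves explicitly.

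Conversely, let $(T,[\cdot,\cdot,\cdot])$ be a Lie triple system and send $X_3$ to its bracket. Axiom 1 makes this assignment $\mathbb{S}_3$-equivariant, so it defines a $\mathcal{P}_E$-algebra, and axioms 2 and 3 kill both generators, hence all of $R$. The two constructions are mutually inverse, so $\mathbf{LTS}$-algebras are exactly Lie triple systems.
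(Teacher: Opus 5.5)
Your framework is the right one, and it is what the paper relies on: the paper states this proposition without proof and argues the $\mathbf{pre}$-$\mathbf{LTS}$ analogue in one line the same way. Since $E$ is generated by $X_3$ with $X_3^{(1\,2)}=-X_3$, a $\mathcal{P}_E$-algebra is a trilinear map skew-symmetric in its first two arguments, and a $\mathcal{P}_E/R$-algebra is one on which the two generators of $R$ vanish.

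The gap is the step you postpone. You expect $A^{(1\,2\,3)}$ to produce $\pm[a_3,[a_1,a_2,a_4],a_5]$, but this fails under any convention. The $3$-cycle fixes leaves $4$ and $5$, so $A^{(1\,2\,3)}$ still has $a_4,a_5$ as outer inputs and $a_1,a_2,a_3$ feeding the inner vertex. With the paper's convention (leaves relabelled by $\sigma^{-1}$) you get $A^{(1\,2\,3)}=[[a_3,a_1,a_2],a_4,a_5]$. Neither skew-symmetry nor $X_1+X_2+X_3=0$ ever moves a leaf between the inner and the root vertex, so no sign bookkeeping repairs this.

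In fact the relation as printed fails in genuine Lie triple systems. Take the two-dimensional one of Example~\ref{exa4.7}, with $[e_1,e_2,e_2]=e_1$ and $[e_2,e_1,e_2]=-e_1$, and evaluate at $(e_1,e_2,e_2,e_2,e_2)$. You get $B=0$, $A=e_1$, $A^{(1\,2\,3)}=[[e_2,e_1,e_2],e_2,e_2]=-e_1$ and $B^{(1\,3)(2\,4)}=0$. Hence $B-A+A^{(1\,2\,3)}-B^{(1\,3)(2\,4)}=-2e_1\neq 0$, and it equals $-e_1$ with the opposite convention. So your converse direction (that axiom 3 kills the second generator) is false for the operad as literally defined.

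What is missing is to recognise that the printed generator is a misprint and replace it by the intended one. That is the tree relation $r_2$ drawn in the splitting subsection, namely $B-A+A^{(3\,4)}-B^{(1\,3)(2\,4)}$. Indeed $A^{(3\,4)}=[[a_1,a_2,a_4],a_3,a_5]=-[a_3,[a_1,a_2,a_4],a_5]$ by skew-symmetry. Since $(3\,4)$ and $(1\,3)(2\,4)$ are involutions, the left/right action convention no longer matters. With this generator your evaluation yields exactly axiom 3, and the rest of your argument, including the converse, goes through unchanged. As written, however, your proof depends on a computation whose outcome, once carried out, contradicts your prediction.
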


\subsubsection{Splitting of the Lie triple system operad}

Here we use the procedure described in \cite{PBG13} to split the operad $\mathbf{LTS} $. For each integer $n\geq 1$, denote $[n]=\left\{1,2,\cdots,n \right\}. $

\begin{defi}[\cite{PBG13}]
    \begin{enumerate}
        \item Let $\Omega $ be a set. A decorated tree $t$ is a tree together with  decorations on the vertices  by elements of $\Omega$ and  decorations on the leaves  by distinct positive integers. We denote by $t(\Omega) $ the set of decorated trees of $t$ and $\mathcal{T}(\Omega) $ the set of all decorated trees.
        \item For $t\in \mathcal{T}(\Omega)$, $Vin(t) $ (resp. $Lin(t)$) denote the set (resp. ordered set) of labels of the vertices (resp. leaves) of $t$.
    \end{enumerate}
\end{defi}

\begin{defi}[\cite{PBG13}]
    \begin{enumerate}
        \item For any $1\leq m\leq n $, let $N_{(n,m)} $ denote the set of all nonempty subsets of $[n] $ with at most $m$ elements. In particular, $$N_{(n,1)}:= A_n:=\left\{\left\{1 \right\}, \left\{2 \right\},\cdots, \left\{n \right\} \right\}, N_{(n,n)}:= B_n:= \left\{J\subseteq [n] \mid J\neq \varnothing \right\}. $$
        \item Let $t$ be a decorated tree and $\varnothing \neq J\subseteq L{in}(t). $ For $w\in V{in}(t) $, let $t_w$ denote the sub-tree of $t$ with root $w$. Denote $$J \sqcap w:=J\sqcap (w,t):=\left\{ i\in In(w)\mid J\cap Lin(t_i)\neq \varnothing \right\}\subseteq [l] ,$$ where $l$ is the number of inputs of $w$.
    \end{enumerate}
\end{defi}

\begin{ex}\cite{PBG13}
    Consider the case when $Lin(t)= [3] $. Then,  $w\in Vin(t) $  has arity $2$ or $3$. If $\left | Vin(w)\right |=3  $, then $t_w=t$ and $J\sqcap w= J $ for each $J\subseteq [3] $. If $ \left | Vin(w)\right |=2 $, then $w$ can appear  in four locations in $t$, denoted $w_i$, $1\leq i\leq 4$: $$t= w_1(1\vee w_2(2\vee 3)), t= w_3(w_4(1\vee 2)\vee 3). $$ For $J=2$, we have $$J\sqcap w_1= \left\{2\right\}, J\sqcap w_2= \left\{1\right\}, J\sqcap w_3= \left\{1\right\}, J\sqcap w_4= \left\{2\right\}. $$ For $J= \left\{1,3 \right\}$, we have $$J\sqcap w_1= \left\{1,2\right\}, J\sqcap w_2= \left\{2\right\}, J\sqcap w_3= \left\{1,2\right\}, J\sqcap w_4= \left\{1\right\}. $$
\end{ex}

\begin{defi}[\cite{PBG13}]
    \begin{enumerate}
        \item A configuration is a sequence $\mathcal{C}=(C_n)_{n\geq 1}$ with $C_n\subseteq B_n$ such that for any $J\in C_n$, decorated $n$-tree $t$ and $w\in Vin(t)$, we have $J\sqcap w\in C_{\left | In(w)\right |} $ whenever $J\sqcap w\neq \varnothing $.
        \item A configuration $\mathcal{C}=(C_n)$ is called $\mathcal{S}-$invariant if $C_n^{\mathcal{S}_n}\subseteq C_n, n\geq 1 $.
    \end{enumerate}
\end{defi}

\begin{ex}[\cite{PBG13}]
  \begin{enumerate}
      \item  $\mathcal{A}=(A_n) $ is a configuration.
      \item $\mathcal{B}=(B_n) $ is a configuration.
  \end{enumerate} 
\end{ex}

\begin{defi}[\cite{PBG13}]
    Let $V=\bigoplus\limits_{n\geq 1}V_n $ be a graded vector space with basis $\mathcal{V}= \bigsqcup\limits_{n\geq 1}\mathcal{V}_n $ and $\mathcal{C}$ be a configuration. \begin{enumerate}
        \item Define a graded vector space $\mathcal{C}Sp(V) $ by $$\mathcal{C}Sp(V)_n= V_n\otimes (\bigoplus\limits_{I\in C_n}\mathbf{K}e_I), $$ where we denote $(w\otimes e_I) $ by $\binom{w}{e_I} $ for $w\in \mathcal{V}_n$. Then $\left\{\binom{w}{e_I}\mid w\in \mathcal{V}_n,n\geq 1, I\in C_n \right\} $ is a basis of $\mathcal{C}Sp(V)_n $.
        \item For a labeled $n$-tree $t$ decorated by elements in $\mathcal{V} $, define $\mathcal{C}Sp(t) $ by \begin{itemize}
            \item $\mathcal{C}Sp(|)= | $,
            \item when $n\geq 2$, $\mathcal{C}Sp(t) $ is obtained by replacing each decoration $w\in Vin(t)\cap \mathcal{V}_l $ by $$\tbinom{w}{\star}:=\tbinom{w}{\star_\mathcal{C}}= \sum\limits_{I\in C_l}\tbinom{w}{e_I}. $$
        \end{itemize} We extend this definition to labeled tree decorated by elements in $V$ by linearity.
    \end{enumerate}
\end{defi}

\begin{defi}[\cite{PBG13}]
    Let $V=\bigoplus\limits_{n\geq 1}V_n $ be a graded vector space with basis $\mathcal{V}=\bigsqcup\limits_{n\geq 1} \mathcal{V}_n $ and let $\mathcal{C}$ be a configuration. Let $t $ be a labeled $n$-tree decorated by elements in $\mathcal{V} $ and let $J\in C_{\left | Lin(t)\right |} $. The splitting with configuration $\mathcal{C} $ $\mathcal{C}Sp_J(t) $ of $t$ with respect to $J$ is an element of the free non-symmetric operad $\mathcal{P}_{ns}(\mathcal{C}Sp(V)) $ over $\mathcal{C}Sp(V) $, defined by induction on $n:= \left | Lin(t)\right |$ as follows: \begin{itemize}
        \item $\mathcal{C}Sp_J(|)=|; $
        \item assume that $\mathcal{C}Sp_J(t) $ have been defined for $t$ with $ \left | Lin(t)\right |\leq k $ for a $k\geq 1$. Then, for a labeled $(k+1) $-tree $t$ with its decomposition $t=w(t_1\vee t_2\vee \cdots\vee t_l) $, $w\in \mathcal{V}_l$, denote $I:= J\sqcap t\in C_l $ and define $$\mathcal{C}Sp_J(t):= \tbinom{w}{e_I}\left(\bigvee\limits_{i=1}^l\mathcal{C}Sp_{J\cap Lin(t_i)}(t_i)\right)=\tbinom{w}{e_I}( \mathcal{C}Sp_{J\cap Lin(t_1)}(t_1)\vee \cdots \vee \mathcal{C}Sp_{J\cap Lin(t_l)}(t_l)), $$ using the notation $\mathcal{C}Sp_{\varnothing}(t)=\mathcal{C}Sp(t) $ from the previous definition.
    \end{itemize} 
\end{defi}

\begin{prop}[\cite{PBG13}]
    Let $V=\bigoplus\limits_{n\geq 1}V_n $ be a graded vector space with basis $\mathcal{V}=\bigsqcup\limits_{n\geq 1}\mathcal{V}_n $ and $\mathcal{C}$ be a configuration. Let $t$ be a labeled tree decorated by elements in $\mathcal{V}$ and $J\in C_{\left | Lin(t)\right |} $. The $\mathcal{C}-$splitting $\mathcal{C}Sp_J(t) $ is obtained by relabeling each vertex $w\in \mathcal{V}_l$ of $t$ by $\binom{w}{e_I} $ if $I:= J\sqcap w\neq \varnothing $ and by $\binom{w}{\star_\mathcal{C}}:= \sum\limits_{I\in C_l}\binom{w}{e_I} $ if $J\sqcap w=\varnothing $.
\end{prop}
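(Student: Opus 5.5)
We argue by induction on $n=|Lin(t)|$, following the recursive definition of $\mathcal{C}Sp_J(t)$. The claim to prove for every $t$ and every $J$ (allowing $J=\varnothing$, with the convention $\mathcal{C}Sp_\varnothing(t)=\mathcal{C}Sp(t)$) is the following: $\mathcal{C}Sp_J(t)$ is the tree $t$ in which every vertex $w\in\mathcal{V}_l$ is relabeled by $\binom{w}{e_{J\sqcap w}}$ when $J\sqcap w\neq\varnothing$, and by $\binom{w}{\star_\mathcal{C}}$ when $J\sqcap w=\varnothing$.

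The case $J=\varnothing$ holds by definition. Here $J\sqcap w=\varnothing$ for every vertex, and $\mathcal{C}Sp(t)$ replaces each vertex by $\binom{w}{\star}$. The trivial tree $|$ has no vertices, so it is also immediate.

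For the inductive step, take $t=w(t_1\vee\cdots\vee t_l)$ and $J\neq\varnothing$. By definition,
$$\mathcal{C}Sp_J(t)=\tbinom{w}{e_I}\Bigl(\mathcal{C}Sp_{J\cap Lin(t_1)}(t_1)\vee\cdots\vee \mathcal{C}Sp_{J\cap Lin(t_l)}(t_l)\Bigr),\qquad I=J\sqcap w .$$
The root $w$ satisfies $t_w=t$, so $J\sqcap w=\{i: J\cap Lin(t_i)\neq\varnothing\}$. Since $J\neq\varnothing$, this set is nonempty, and the configuration property gives $I\in C_l$. Thus the root receives the label $\binom{w}{e_I}$, as claimed.

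For the remaining vertices, set $J_i=J\cap Lin(t_i)$ and apply the induction hypothesis to each $t_i$, which has fewer leaves. This requires knowing that $J_i\in C_{|Lin(t_i)|}$ when $J_i\neq\varnothing$. I expect this to be the only delicate point. It follows from the configuration axiom once one notes that the defining condition is stated for arbitrary decorated trees and vertices, and so applies to the corolla obtained by viewing $t_i$'s position. If instead the splitting is defined for arbitrary $J\subseteq Lin(t)$, the point is moot, because only the labels $J\sqcap w$ at vertices need to lie in the corresponding $C_l$, and that is again the configuration axiom.

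The key compatibility is that for a vertex $u$ of $t_i$ one has
$$J\sqcap(u,t)=J_i\sqcap(u,t_i).$$
This holds because the subtree $t_u$ is the same in $t$ and in $t_i$, and $Lin(t_u)\subseteq Lin(t_i)$, so $J\cap Lin(t_{u,k})=J_i\cap Lin(t_{u,k})$ for each input $k$ of $u$. For $J_i=\varnothing$ this says every vertex of $t_i$ has $J\sqcap u=\varnothing$, and it is labeled $\binom{u}{\star}$ by the base case. Assembling the root with the subtrees then gives exactly the stated relabeling of $t$, which completes the induction.
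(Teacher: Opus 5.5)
Your induction along the recursive definition is correct and is the standard argument for this result: the root gets the label $\binom{w}{e_{J\sqcap w}}$ with $J\sqcap w\neq\varnothing$, and for vertices $u$ of $t_i$ you have the compatibility $J\sqcap(u,t)=(J\cap Lin(t_i))\sqcap(u,t_i)$ (the paper itself gives no proof and only quotes the statement from \cite{PBG13}). The one vague spot is your claim that $J\cap Lin(t_i)$ lies in $C_{|Lin(t_i)|}$ when nonempty; to make it precise, replace $t_i$ in $t$ by a corolla whose leaves are $Lin(t_i)$ in order, and apply the configuration axiom to that new vertex.
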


\begin{ex}[\cite{PBG13}]
    For the configuration $\mathcal{A}$, we have \[
\mathcal{C}Sp_{\left\{{x_2}\right\}}\left(\begin{tikzpicture}[
  grow=up,  baseline=(current bounding box.center)
  level distance=1cm,
  sibling distance=2cm
]
\node {$w_3$}
  child { node {$w_2$} [sibling distance=1cm]
    child { node {$x_6$} }
    child { node {$x_5$} }
  }
  child { node {$w_1$} [sibling distance=1cm]
   child { node {$x_4$} }
   child {node {$x_3$} }
   child {node {$x_2$} } }
  child{ node {$x_1$}};
\end{tikzpicture}\right)= \begin{tikzpicture}[
  grow=up,  baseline=(current bounding box.center)
  level distance=1cm,
  sibling distance=2cm
]
\node {$\binom{w_3}{e_2}$}
  child { node {$\binom{w_2}{\star_\mathcal{A}}$} [sibling distance=1cm]
    child { node {$x_6$} }
    child { node {$x_5$} }
  }
  child { node {$\binom{w_1}{e_1}$} [sibling distance=1cm]
   child { node {$x_4$} }
   child {node {$x_3$} }
   child {node {$x_2$} } }
  child{ node {$x_1$}};
\end{tikzpicture}\] \[= \begin{tikzpicture}[
  grow=up,  baseline=(current bounding box.center)
  level distance=1cm,
  sibling distance=2cm
]
\node {$\binom{w_3}{e_2}$}
  child { node {$\binom{w_2}{e_1}$} [sibling distance=1cm]
    child { node {$x_6$} }
    child { node {$x_5$} }
  }
  child { node {$\binom{w_1}{e_1}$} [sibling distance=1cm]
   child { node {$x_4$} }
   child {node {$x_3$} }
   child {node {$x_2$} } }
  child{ node {$x_1$}};
\end{tikzpicture} + \begin{tikzpicture}[
  grow=up,  baseline=(current bounding box.center)
  level distance=1cm,
  sibling distance=2cm
]
\node {$\binom{w_3}{e_2}$}
  child { node {$\binom{w_2}{e_2}$} [sibling distance=1cm]
    child { node {$x_6$} }
    child { node {$x_5$} }
  }
  child { node {$\binom{w_1}{e_1}$} [sibling distance=1cm]
   child { node {$x_4$} }
   child {node {$x_3$} }
   child {node {$x_2$} } }
  child{ node {$x_1$}};
\end{tikzpicture}
\]
\end{ex}

We describe now how to split an operad.

\begin{defi}[\cite{PBG13}]
    Let $\mathcal{P}=\mathcal{P}_V/(R) $ be an operad, where $V=\bigoplus\limits_{n\geq 1}V(n) $ is an $\mathbb{S} $-module with a linear basis $\mathcal{V}=\bigsqcup\limits_{n\geq 1}\mathcal{V}_n $  that is invariant under the action of $\mathbb{S}_n $ and where the space of relations $(R)$ is generated, as an $\mathbb{S}- $module, by a set $R$ of locally homogeneous elements $$r_s= \sum\limits_{i}c_{s,i}t_{s,i}, \ c_{s,i}\in \mathbf{K}, \ t_{s,i}\in \mathcal{P}_\mathcal{V}, \ 1\leq s\leq k. $$ Let $\mathcal{C} $ be an $\mathbb{S} $-invariant configuration. The $\mathcal{C}$-splitting of $\mathcal{P} $ is defined to be the operad $$\mathcal{C}Sp(\mathcal{P})= \mathcal{P}_{\mathcal{C}Sp(V)}/(\mathcal{C}Sp(R)), $$ where the $\mathbb{S}_n $-action on $\mathcal{C}Sp(V)(n)=V(n)\otimes (\bigoplus\limits_{I\in C_n}\mathbf{K}e_I) $ is given by $$\tbinom{w}{e_I}^\sigma:= \tbinom{w^{\sigma}}{e_{\sigma(I)}}, \ w\in V(n), \ \sigma(I)= \left\{\sigma(i)\mid i\in I \right\}  $$ and the space of relations is generated, as an $\mathbb{S}- $module, by $$\mathcal{C}Sp(R):= \left\{\mathcal{C}Sp_J(r_s)= \sum\limits_{i}c_{s,i}\mathcal{C}Sp_J(t_{s,i}) \mid J\in \mathcal{C}_{\left | Lin(t_{s,i})\right |}, \ 1\leq s\leq k \right\}. $$
\end{defi}

\begin{ex}
    \begin{enumerate}
    \item $\mathcal{B}Sp(\mathbf{Lie})= \mathbf{post}-\mathbf{Lie} $.
        \item $\mathcal{A}Sp(\mathbf{Lie})= \mathbf{pre}-\mathbf{Lie}. $ We compute this last exemple. The Lie operad is equal to $\mathcal{P}_E/(R) $, where $E$ is the $\mathbb{S} $-module defined by $E(n)=0 $ for $n\neq 2$ and $E(2)$ is the signature $\mathbb{S}_2 $-module generated by $[\cdot,\cdot] $ which we will denote by $w$ and $(R)$ is the $\mathbb{S}_3 $-submodule of $\mathcal{P}_E$ generated by $R=w\circ (w,I) + w\circ (w,I)^{(1 \ 2 \ 3)}+ w\circ (w,I)^{(1 \ 3 \ 2)} $. So $\mathcal{A}Sp(\mathbf{Lie})=\mathcal{P}_{\mathcal{A}Sp(E)}/(\mathcal{A}Sp(R)) $. $\mathcal{A}Sp(E)(n)=0 $ for $n\neq 2$ and $\mathcal{A}Sp(E)(2)$ is the $\mathbb{S}_2 $-module generated by $\binom{w}{e_{\left\{1\right\}}} $ because $\binom{w}{e_{\left\{2\right\}}}^{(1 \ 2)}= \binom{w^{(1 \ 2)}}{e_{\left\{1\right\}}}= -\binom{w}{e_{\left\{1\right\}}}  $. \[R= \begin{tikzpicture}[
  grow=up,  baseline=(current bounding box.center)
  level distance=1cm,
  sibling distance=2cm
]
\node {$w$}
  child { node {$I$} [sibling distance=1cm]
    child { node {$x_3$} }
  }
  child { node {$w$} [sibling distance=1cm]
   child {node {$x_2$} }
   child {node {$x_1$} } };
\end{tikzpicture}+ \begin{tikzpicture}[
  grow=up,  baseline=(current bounding box.center)
  level distance=1cm,
  sibling distance=2cm
]
\node {$w$}
  child { node {$I$} [sibling distance=1cm]
    child { node {$x_2$} }
  }
  child { node {$w$} [sibling distance=1cm]
   child {node {$x_1$} }
   child {node {$x_3$} } };
\end{tikzpicture} + \begin{tikzpicture}[
  grow=up,  baseline=(current bounding box.center)
  level distance=1cm,
  sibling distance=2cm
]
\node {$w$}
  child { node {$I$} [sibling distance=1cm]
    child { node {$x_1$} }
  }
  child { node {$w$} [sibling distance=1cm]
   child {node {$x_3$} }
   child {node {$x_2$} } };
\end{tikzpicture}. \] $$\mathcal{A}Sp_{\left\{1\right\}}(R)= \tbinom{w}{e_1}\left(\tbinom{w}{e_1}(x_1,x_2),x_3\right)+ \tbinom{w}{e_1}\left(\tbinom{w}{e_2}(x_3,x_1),x_2\right)+ \tbinom{w}{e_2}\left(\tbinom{w}{\star_{\mathcal{A}}}(x_2,x_3),x_1\right), $$ with $\binom{w}{e_2}= -\binom{w}{e_1}^{(1 \ 2)} $ and $\binom{w}{\star_\mathcal{A}}= \binom{w}{e_1}+\binom{w}{e_2}= \binom{w}{e_1}-\binom{w}{e_1}^{(1 \ 2)} $ so \begin{align*}
    \mathcal{A}Sp_{\left\{1\right\}}(R)=&\tbinom{w}{e_1}(\tbinom{w}{e_1}(x_1,x_2),x_3)- \tbinom{w}{e_1}(\tbinom{w}{e_1}(x_1,x_3),x_2)- \tbinom{w}{e_1}(x_1,\tbinom{w}{\star_{\mathcal{A}}}(x_2,x_3)) \\ =& \tbinom{w}{e_1}(\tbinom{w}{e_1}(x_1,x_2),x_3)- \tbinom{w}{e_1}(\tbinom{w}{e_1}(x_1,x_3),x_2)- \tbinom{w}{e_1}(x_1,\tbinom{w}{e_1}(x_2,x_3)) \\ &+\tbinom{w}{e_1}(x_1,\tbinom{w}{e_1}(x_3,x_2)) . 
\end{align*} If we denote $\circ $ the operation $\binom{w}{e_1} $, $\mathcal{A}Sp_{\left\{1\right\}}(R) $ is exactly the same as the identity $$(x_1\circ x_2)\circ x_3 - (x_1\circ x_3)\circ x_2 - x_1\circ(x_2\circ x_3) + x_1\circ (x_3\circ x_2) $$ which defines a right-symmetric algebra. Hence, $\mathcal{A}Sp_{\left\{2\right\}}(R) $ and $\mathcal{A}Sp_{\left\{3\right\}}(R) $ give the same relation. 
    \end{enumerate}
\end{ex}

We now  split the operad $\mathbf{LTS} $. We have $\mathcal{A}Sp(E)(n)=0 $ for $n\neq 3$ and $\mathcal{A}Sp(E)(3) $ is the $\mathbb{S}_3$-module generated by $\binom{X_1}{e_1} $ and $\binom{X_3}{e_1} $. Indeed, we have the relations $\binom{X_i}{e_j}^\sigma= \varepsilon(\sigma)\binom{X_{\sigma^{-1}(i)}}{e_{\sigma(j)}} $ for all $\sigma\in \mathbb{S}_3$. We write the identities of a Lie triple system in terms of trees. $ X_1+X_2+X_3$ is equivalent to \[r_1= \begin{tikzpicture}[
  grow=up,  baseline=(current bounding box.center)
  level distance=1cm,
  sibling distance=1cm
]
\node {$X_1$}
  child { node {$x_3$} [sibling distance=1cm] }
  child { node {$x_2$} [sibling distance=1cm] }
  child { node {$x_1$} [sibling distance=1cm] };
\end{tikzpicture}+ \begin{tikzpicture}[
  grow=up,  baseline=(current bounding box.center)
  level distance=1cm,
  sibling distance=1cm
]
\node {$X_2$}
  child { node {$x_3$} [sibling distance=1cm] }
  child { node {$x_2$} [sibling distance=1cm] }
  child { node {$x_1$} [sibling distance=1cm] };
\end{tikzpicture} + \begin{tikzpicture}[
  grow=up,  baseline=(current bounding box.center)
  level distance=1cm,
  sibling distance=1cm
]
\node {$X_3$}
  child { node {$x_3$} [sibling distance=1cm] }
  child { node {$x_2$} [sibling distance=1cm] }
  child { node {$x_1$} [sibling distance=1cm] };
\end{tikzpicture}. \] $\mathcal{A}Sp_{\left\{1\right\}}(r_1)= \binom{X_1}{e_1}+ \binom{X_2}{e_1}+\binom{X_3}{e_1}=\binom{X_1}{e_1}- \binom{X_3}{e_1}^{(2 \ 3)}+\binom{X_3}{e_1}  $ and $\mathcal{A}Sp_{\left\{2\right\}}(r_1),\mathcal{A}Sp_{\left\{3\right\}}(r_1) $ give the same relation. $B-A+ A^{(1\ 2\ 3)}- B^{(1\ 3)(2\ 4)}$ is equivalent to \[r_2= \begin{tikzpicture}[
  grow=up,  baseline=(current bounding box.center)
  level distance=1cm,
  sibling distance=1cm
]
\node {$X_3$}
  child { node {$X_3$} [sibling distance=1cm]
    child { node {$x_5$} }
    child {node {$x_4$} }
    child { node {$x_3$} }
  }
  child { node {$x_2$} [sibling distance=1cm] }
  child {node {$x_1$} } ;
\end{tikzpicture}- \begin{tikzpicture}[
  grow=up,  baseline=(current bounding box.center)
  level distance=1cm,
  sibling distance=1cm
]
\node {$X_3$}
  child { node {$x_5$} [sibling distance=1cm] }
  child {node {$x_4$} } 
  child { node {$X_3$} [sibling distance=1cm]
    child { node {$x_3$} }
    child {node {$x_2$} }
    child { node {$x_1$} }
  };
\end{tikzpicture} - \begin{tikzpicture}[
  grow=up,  baseline=(current bounding box.center)
  level distance=1cm,
  sibling distance=1cm
]
\node {$X_3$}
  child { node {$x_5$} [sibling distance=1cm] }
   child { node {$X_3$} [sibling distance=1cm]
    child { node {$x_4$} }
    child {node {$x_2$} }
    child { node {$x_1$} }}
  child {node {$x_3$} } ;
\end{tikzpicture} - \begin{tikzpicture}[
  grow=up,  baseline=(current bounding box.center)
  level distance=1cm,
  sibling distance=1cm
]
\node {$X_3$}
  child { node {$X_3$} [sibling distance=1cm]
    child { node {$x_5$} }
    child {node {$x_2$} }
    child { node {$x_1$} }
  }
  child { node {$x_4$} [sibling distance=1cm] }
  child {node {$x_3$} } ;
\end{tikzpicture} . \] Now, we deal with $\mathcal{P}_{\mathcal{A}Sp(E)}/(\mathcal{A}Sp(r_1)) $ so $$\tbinom{X_3}{\star_\mathcal{A}}= \tbinom{X_3}{e_1}+\tbinom{X_3}{e_2}+\tbinom{X_3}{e_3}= \tbinom{X_3}{e_1}- \tbinom{X_3}{e_1}^{(1 \ 2)}+ \tbinom{X_3}{e_1}^{(1 \ 3)}- \left(\tbinom{X_3}{e_1}^{(2 \ 3)}\right)^{(1 \ 3)} .$$ \begin{align*}
    \mathcal{A}Sp_{\left\{1\right\}}(r_2) =&\tbinom{X_3}{e_1}\left(x_1,x_2,\tbinom{X_3}{\star_\mathcal{A}}(x_3,x_4,x_5) \right)- \tbinom{X_3}{e_1}\left(\tbinom{X_3}{e_1}(x_1,x_2,x_3),x_4,x_5 \right)\\ & - \tbinom{X_3}{e_2}\left(x_3,\tbinom{X_3}{e_1}(x_1,x_2,x_4),x_5 \right)- \tbinom{X_3}{e_3}\left(x_3,x_4,\tbinom{X_3}{e_1}(x_1,x_2,x_5) \right)  \\ =& \tbinom{X_3}{e_1}\left(x_1,x_2,\tbinom{X_3}{e_1}(x_3,x_4,x_5) \right) - \tbinom{X_3}{e_1}\left(x_1,x_2,\tbinom{X_3}{e_1}(x_4,x_3,x_5) \right)\\ & + \tbinom{X_3}{e_1}\left(x_1,x_2,\tbinom{X_3}{e_1}(x_5,x_4,x_3) \right)- \tbinom{X_3}{e_1}\left(x_1,x_2,\tbinom{X_3}{e_1}(x_5,x_3,x_4) \right) \\ &- \tbinom{X_3}{e_1}\left(\tbinom{X_3}{e_1}(x_1,x_2,x_3),x_4,x_5 \right) - \tbinom{X_3}{e_1}\left(\tbinom{X_3}{e_1}(x_1,x_2,x_4),x_3,x_5 \right) \\ &- \tbinom{X_3}{e_1}\left(\tbinom{X_3}{e_1}(x_1,x_2,x_5),x_4,x_3 \right) +  \tbinom{X_3}{e_1}\left(\tbinom{X_3}{e_1}(x_1,x_2,x_5),x_3,x_4 \right). 
\end{align*} Then,  $\mathcal{A}Sp_{\left\{1\right\}}(r_2) $ corresponds to Equation \eqref{equ7} in  \hyperref[d4.6]{Definition~\ref*{d4.6}}. Moreover, $\mathcal{A}Sp_{\left\{2\right\}}(r_2) $ gives the same identity. \begin{align*}
    \mathcal{A}Sp_{\left\{3\right\}}(r_2) =& \tbinom{X_3}{e_3}\left(x_1,x_2,\tbinom{X_3}{e_1}(x_3,x_4,x_5) \right)- \tbinom{X_3}{e_1}\left(\tbinom{X_3}{e_3}(x_1,x_2,x_3),x_4,x_5 \right)\\ & - \tbinom{X_3}{e_1}\left(x_3,\tbinom{X_3}{\star_\mathcal{A}}(x_1,x_2,x_4),x_5 \right) - \tbinom{X_3}{e_1}\left(x_3,x_4,\tbinom{X_3}{\star_\mathcal{A}}(x_1,x_2,x_5) \right).
\end{align*} 
Furthermore, $\mathcal{A}Sp_{\left\{3\right\}}(r_2) $ corresponds to Equation \eqref{equ8} in  \hyperref[d4.6]{Definition~\ref*{d4.6}}. Indeed, with the notation of  \hyperref[d4.6]{Definition~\ref*{d4.6}}, if we denote the operation $\tbinom{X_3}{e_1} $ by $\left\{\cdot,\cdot,\cdot\right\} $ then $\tbinom{X_3}{e_3}= \left\{\cdot,\cdot,\cdot\right\}^\wedge $ and $\tbinom{X_3}{\star_{\mathcal{A}}}= [\cdot,\cdot,\cdot]_C $. We denote by $\mathbf{pre-LTS} $ the operad $\mathcal{P}_{\mathcal{A}Sp(E)}/(\mathcal{A}Sp(r_1), \mathcal{A}Sp_{\left\{1\right\}}(r_2),     \mathcal{A}Sp_{\left\{3\right\}}(r_2)) $. This is the operad of pre-Lie triple systems.

\begin{prop}
    The $\mathbf{pre-LTS} $-algebras are the pre-Lie triple systems.
\end{prop}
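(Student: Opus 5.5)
The plan is to make the correspondence between algebras over a presented operad and structures satisfying the generating relations explicit for $\mathbf{pre-LTS}=\mathcal{P}_{\mathcal{A}Sp(E)}/(\mathcal{A}Sp(r_1),\mathcal{A}Sp_{\{1\}}(r_2),\mathcal{A}Sp_{\{3\}}(r_2))$. An algebra over a quotient of a free operad $\mathcal{P}_F/(S)$ is a vector space with an $\mathbb{S}$-equivariant action of the generators $F$ on which every element of $S$, and hence of the ideal it generates, evaluates to zero. The first step is therefore to identify what a $\mathcal{P}_{\mathcal{A}Sp(E)}$-algebra is. Since $\mathcal{A}Sp(E)(3)$ is spanned by the $\mathbb{S}_3$-orbits of $\binom{X_1}{e_1}$ and $\binom{X_3}{e_1}$, and $\binom{X_i}{e_j}^\sigma=\varepsilon(\sigma)\binom{X_{\sigma^{-1}(i)}}{e_{\sigma(j)}}$, such an algebra amounts to a vector space with two ternary operations. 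One is $\{x,y,z\}:=\binom{X_3}{e_1}(x,y,z)$, the other comes from $\binom{X_1}{e_1}$, and all remaining generators are expressed through these two via the action.

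Next I impose $\mathcal{A}Sp(r_1)$. The relation $\binom{X_1}{e_1}=\binom{X_3}{e_1}^{(2\,3)}-\binom{X_3}{e_1}$ shows that $\binom{X_1}{e_1}$ is determined by $\{\cdot,\cdot,\cdot\}$. So a $\mathcal{P}_{\mathcal{A}Sp(E)}/(\mathcal{A}Sp(r_1))$-algebra is just a vector space with one arbitrary trilinear map $\{\cdot,\cdot,\cdot\}$. I would check that any trilinear map extends consistently, because $r_1$ exactly encodes the $\mathbb{S}_3$-compatibility of the extension. Along the way I record the dictionary already noted before the statement: $\binom{X_3}{e_3}(x,y,z)=\{x,y,z\}^\wedge$ and $\binom{X_3}{\star_\mathcal{A}}=[\cdot,\cdot,\cdot]_C$. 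Both follow by applying the transpositions $(1\,3)$ and $(1\,2)$ to $\binom{X_3}{e_1}$ and using the sign rule.

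Then I would substitute this dictionary into the two displayed expansions of $\mathcal{A}Sp_{\{1\}}(r_2)$ and $\mathcal{A}Sp_{\{3\}}(r_2)$. The first becomes \eqref{equ7} after renaming $(x_1,x_2,x_3,x_4,x_5)\mapsto(x_5,x_1,x_2,x_3,x_4)$ and grouping its last four terms as $\{x_2,x_3,\{x_5,x_1,x_4\}\}^\wedge$. The second becomes \eqref{equ8}, using skew-symmetry of $[\cdot,\cdot,\cdot]_C$ in its first two arguments. The $\mathbb{S}_5$-orbits of these relations impose nothing new, since the identities are required for all inputs, and $\mathcal{A}Sp_{\{2\}}(r_2)$ coincides with $\mathcal{A}Sp_{\{1\}}(r_2)$ up to the action. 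Hence the $\mathbf{pre-LTS}$-algebras are exactly the spaces with a trilinear map satisfying \eqref{equ7} and \eqref{equ8}, which are the pre-Lie triple systems. Conversely, given a pre-Lie triple system, defining the generators through the dictionary gives an action that kills all relations.

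The main obstacle is the sign and index bookkeeping. I must verify that each tree term of $\mathcal{A}Sp_J(r_2)$, after $\binom{X_3}{e_2}$, $\binom{X_3}{e_3}$ and $\binom{X_3}{\star_\mathcal{A}}$ are rewritten via the action, matches the corresponding term of \eqref{equ7} and \eqref{equ8} with the correct sign. I would carry this out term by term, tracking $\sigma^{-1}$ on the leaves.
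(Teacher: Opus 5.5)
Your proposal is correct and is the same argument as the paper's, whose proof just observes that, by the presentation of $\mathbf{pre-LTS}$, an algebra structure is the same as a single trilinear map $\{\cdot,\cdot,\cdot\}$ satisfying \eqref{equ7} and \eqref{equ8}; you are simply making that translation explicit. Two corrections for the bookkeeping: $\mathcal{A}Sp_{\{3\}}(r_2)$ becomes, after $x_3\mapsto x_5$, $x_4\mapsto x_3$, $x_5\mapsto x_4$ and with no skew-symmetry needed, the identity whose left side is $\{x_1,x_2,\{x_5,x_3,x_4\}\}^{\wedge}$ (this is what condition 2 of a representation demands, so the printed left side of \eqref{equ8} is a misprint); and in the paper's expanded display of $\mathcal{A}Sp_{\{1\}}(r_2)$ the term $\{\{x_1,x_2,x_4\},x_3,x_5\}$ should carry a plus sign, with only the last two terms, not four, forming the $\{\cdot,\cdot,\cdot\}^{\wedge}$-term of \eqref{equ7}.
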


\begin{proof}
    Let $T$ be a vector space. Given the definition of the operad $\mathbf{pre-LTS} $ with generators and relations, endowing $T$ with a structure of a $\mathbf{pre-LTS}$-algebra is equivalent to giving a map $\left\{\cdot,\cdot,\cdot\right\}: T\times T\times T\to T $ satisfying Equations \eqref{equ7} and \eqref{equ8}.
\end{proof}

\section{Rota-Baxter operators on restricted Lie triple systems}\label{s5}
The aim of this section is to introduce a definition of Rota-Baxter operators on restricted Lie triple systems and provide some properties. We discuss Jacobson's Theorem and relationships with representations.
\subsection{Definitions}

Let $T$ be a Lie triple system and $n\geq 3$ be any positive odd integer. For elements $x_1,x_2,\dots , x_n $ of $T$, define \begin{center}
    $(x_1,x_2,\dots,x_n)= [[\cdots [[x_1,x_2,x_3],x_4,x_5],\cdots],x_{n-1},x_n] \in T$.
\end{center} For $x,y\in T $, the expression $(x,Zx+y,Zx+y,\dots , Zx+y) $,  with $Zx+y$ occurring $p-1$ times, is a polynomial in $Z$ with coefficient in $T$. For $i=1,\dots,p $, we define $s_i(x,y) \in T$ by requiring $is_i(x,y)$ to be the coefficient of $Z^{i-1} $ in $(x,Zx+y,Zx+y,\dots,Zx+y) $. For any injective embedding $L$ of $T$, we have \begin{center}
    $(x,Zx+y,Zx+y,\cdots,Zx+y)= [Zx+y,[Zx+y,[\cdots ,[Zx+y,x]\cdots]]] = (\ad(Zx+y))^{p-1}(x) $.
\end{center} In this case, $is_i(x,y) $ is the coefficient of $Z^{i-1} $ in the expression $((\ad(Zx+y))^{p-1}(x) $, which agrees with the use of the notation $s_i(x,y) $ in the definition of a restricted Lie algebra. 

\begin{defi}\label{D5.1}
    A Lie triple system $T$ over a field of characteristic $p>2$ is said restricted if there is given a $p$-map $(\cdot)^{[p]}:T\to T $  such that the following conditions are satisfied : \begin{enumerate}        
        \item $(\alpha x)^{[p]}= \alpha^p x^{[p]} $ for all $\alpha$ in $\mathbf{K}$ and all $x$ in $T$
        \item $(x+y)^{[p]}= x^{[p]}+y^{[p]}+ \sum\limits_{i=1}^{p-1}s_i(x,y) $ for all $x,y$ in $T$,
        \item  $[x,y^{[p]},z]=(x,y,\cdots,y,z) $($p$ copies of y) for all $x,y,z$ in $T$,
    \end{enumerate} where $is_i(x,y)$ is the coefficient of $Z^{i-1} $ in $(x,Zx+y,Zx+y,\dots,Zx+y) $, and $Zx+y $ appears $p-1$ times.
\end{defi}

\begin{defi}
    Let $(T,(\cdot)^{[p]}), (T',(\cdot)^{[p]'})$ be two restricted Lie triple systems. A morphism of restricted Lie triple systems between $T$ and $T'$ is a morphism of Lie triple systems $f:T\to T' $ such that $f(x^{[p]})=f(x)^{[p]'} $ for all $x\in T$.
\end{defi}

\begin{ex}
    Let $L$ be a restricted Lie algebra. Then $L$ equipped with the bracket $[x,y,z]=[[x,y],z] $ is a restricted Lie triple system. 
\end{ex}

\begin{rmq}
    As for restricted Lie algebra, a $p$-map of a restricted Lie triple system is determined by its values on a given basis. Moreover, if $T$ is centerless, then the $p$-map, if it exists, is unique.
\end{rmq}

There is an analogue of Jacobson's theorem for Lie triple systems.

\begin{thm}[\cite{HP02}]\label{thm5.3}
    Let $T$ be a Lie triple system. Suppose there exists a basis $(u_{i})_{i\in J}$  of $T$ with the property that for every $i$ there exists a $v_{i}\in T_{p} $ such that \begin{center}
        $[a,v_{i},c]= (a,u_{i},\dots,u_{i},c) $ for all $a,c\in T$
    \end{center} Then, there is a unique restricted structure on $T$ such that $u_{i}^{[p]}=v_{i} $ for all $i$.
    
\end{thm}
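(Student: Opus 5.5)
We plan to reduce to the associative setting through an injective embedding, following Jacobson's original proof of Theorem~\ref{thm3.4}. Let $L$ be the standard embedding $L_S(T)=T\oplus D(T,T)$. Here $D(T,T)$ is spanned by the inner derivations $D(a,b)=[a,b,\cdot]$, so that $[x,y,z]=[[x,y],z]$ for $x,y,z\in T$. Let $U=U(L)$ be its universal enveloping algebra, with commutator $[w,w']=ww'-w'w$, which extends the bracket of $L$. For $u\in T$ we have $[u^p,w]=\ad_u^p(w)$ in $U$, since we are in characteristic $p$. Because $\ad_u$ exchanges $T$ and $D(T,T)$ and $p$ is odd, unwinding the iterated bracket gives, for all $a,c\in T$,
\begin{equation*}
(a,u,\dots,u,c)=[[a,u^p],c] \quad \text{in } U,
\end{equation*}
up to a global sign $(-1)^p$ which has to be tracked once. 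Hence the hypothesis reads $[[a,v_i],c]=[[a,u_i^p],c]$, which we will use in this form.

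Next we introduce the linear subspace
\begin{equation*}
Z=\{ z\in U \mid [[a,z],c]=0 \text{ for all } a,c\in T\}.
\end{equation*}
The hypothesis says exactly that $z_i:=u_i^p-v_i\in Z$ for every $i$. We define a $p$-semilinear map $f:T\to Z$ by $f(\sum_i\alpha_iu_i)=\sum_i\alpha_i^p z_i$; it is well defined because $(u_i)$ is a basis and $Z$ is a subspace. The candidate $p$-map is then
\begin{equation*}
x^{[p]}:=x^p-f(x).
\end{equation*}

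The key step is to show that $x^{[p]}$ lies in $T$ rather than merely in $U$. We argue by induction on the number of basis vectors in the support of $x$. For $x=\alpha u_i$ we get $x^{[p]}=\alpha^p v_i\in T$. For $x=y+\alpha u_i$ we apply Jacobson's formula in the associative algebra $U$: $(y+\alpha u_i)^p=y^p+(\alpha u_i)^p+\sum_{k=1}^{p-1}s_k(y,\alpha u_i)$. Since $f$ is additive, this gives $x^{[p]}=y^{[p]}+(\alpha u_i)^{[p]}+\sum_k s_k(y,\alpha u_i)$. Each $s_k$ is a combination of brackets of $p-1$ elements of $T$ applied to an element of $T$, that is, an even number of odd steps, so it lies in $T$. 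As the paper remarks before Definition~\ref{D5.1}, these elements coincide with the triple-system $s_k$ built from $(x,Zx+y,\dots,Zx+y)$.

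The axioms then follow directly. Axiom~1 holds because $f$ is $p$-semilinear and $(\alpha x)^p=\alpha^px^p$. Axiom~2 is the computation above with $f$ additive. Axiom~3 holds because $f(x)\in Z$, so $[[a,x^{[p]}],c]=[[a,x^p],c]=(a,x,\dots,x,c)$ for all $a,c\in T$. By construction $u_i^{[p]}=v_i$. Uniqueness comes from axioms~1 and~2: any restricted structure is determined by its values on a basis, by the remark following Definition~\ref{D5.1}.

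I expect the main obstacle to be the bookkeeping in the key step: matching signs and orderings between the iterated triple product $(a,u,\dots,u,c)$ and $[[a,u^p],c]$ in $U$, and checking carefully that the Lie-algebra $s_k$ agree with the triple-system $s_k$. If working in $U$ proves awkward, the fallback is to do the same argument inside the associative algebra $\mathrm{End}(L)$ via $\ad$.
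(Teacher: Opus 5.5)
The paper gives no proof of this statement; it is quoted from \cite{HP02}. The only related comment is the later side remark that the argument rests on a PBW theorem. Your proposal is a correct, self-contained proof. It runs Jacobson's enveloping-algebra argument for Theorem~\ref{thm3.4} inside $U(L_S(T))$, but builds the $p$-map only on $T$.

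The bookkeeping you were worried about works out cleanly. Since $(a,u,\dots,u)$ with $p-1$ copies of $u$ equals $\ad_u^{p-1}(a)$, you get $(a,u,\dots,u,c)=[[\ad_u^{p-1}(a),u],c]=-[[u^p,a],c]=[[a,u^p],c]$, with no leftover sign. The elements $s_k(y,\alpha u_i)$ lie in $T$ by the $\mathbf{Z}/2$-grading $L_S(T)=T\oplus D(T,T)$, and they agree with the triple-system $s_k$, as the paper notes before Definition~\ref{D5.1}. The only PBW input you use is the injectivity of $L_S(T)\to U(L_S(T))$. That holds for Lie algebras in every characteristic, which fits the paper's remark that the uncolored case also covers $p=3$.

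Your subspace $Z$ is the right weakening of the center used in the Lie case, and it lets the argument run in any injective embedding. In the standard embedding you could even use the center itself. Indeed, $[a,z_i]\in D(T,T)$ annihilates $T$, and $D(T,T)$ acts faithfully on $T$, so $[a,z_i]=0$. Hence $z_i$ commutes with $T$, and therefore with all of $U$, since $T$ generates $U$ as an associative algebra.

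Building the $p$-map directly on $T$ also avoids the tempting shortcut of applying Theorem~\ref{thm3.4} to $L_S(T)$. The hypothesis does give $\ad_{u_i}^p=\ad_{v_i}$ on all of $L_S(T)$. However, that route would also require $\ad_d^p$ to be inner for $d$ in a basis of $D(T,T)$, which the hypothesis does not directly supply.
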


\begin{defi}
    Let $(T,(\cdot)^{[p]})$ and $ (T',(\cdot)^{[p]'}) $ be two restricted Lie triple systems. A map $\varphi: T\to T'$ is a morphism of restricted Lie triple systems if it is a morphism of Lie triple systems such that $\varphi(x^{[p]})= \varphi(x)^{[p]'}$ for all $x\in T$.
\end{defi}

\begin{defi}[\cite{BM26}]
    Let $T$ be a restricted Lie triple system, and $(V,\theta)$ be a representation of $T$. The pair $(V,\theta)$ is called a restricted representation if in addition we have that, for all $x,y\in T  $,  
    \begin{equation*}
    \theta(x^{[p]},y)=\theta(x,y) \circ \theta(x,x)^{\frac{p-1}{2}} 
    \end{equation*}  and 
    \begin{equation*} \theta(x,y^{[p]})= \theta(y,y)^{\frac{p-1}{2}}\circ\theta(x,y) .   \end{equation*} 
\end{defi}

\begin{ex}
    The pair $(\theta,T)$, where $\theta(x,y)(z)=[z,x,y] $, is a restricted representation, called the adjoint representation.
\end{ex}

The following  theorem was proved in \cite{BM26} in the colored case using an equivalent version of  \hyperref[thm5.3]{Theorem~\ref*{thm5.3}} in the colored case which is true whenever the $PBW$-Theorem is true in the colored case, i.e. when $p>3$. Since in the non-colored case, $PBW$-Theorem is true in characteristic $3$, the same proof as in \cite{BM26} shows the result in the non-colored case for $p=3$.

\begin{prop}\label{pro5.7}\cite{BM26}
    Let $T$ be a Lie triple system and $(V,\theta) $ be a restricted representation of $T$. Let $(x,v),(y,u),(z,w)\in T\oplus V$. Then we have \begin{center}
        $((y,u),(x,v),\dots,(x,v),(z,w))= ((y,x,\dots,x,z), \theta(x,z)\theta(x,x)^n(u) - \theta(x,z)\sum\limits_{k=1}^n\binom{2n+1}{2k}\theta(x,x)^{k-1}\theta(y,x)\theta(x,x)^{n-k}(v) -\theta(y,z)\theta(x,x)^n(v) + \theta(x,z)\sum\limits_{k=0}^{n-1}\binom{2n+1}{2k+2}\theta(x,x)^{n-k-1}\theta(x,y)\theta(x,x)^k(v) +\sum\limits_{k=0}^n \binom{2n+1}{2k+1}\theta(x,x)^{n-k}\theta(x,y)\theta(x,x)^k(w)-\sum\limits_{k=0}^n \binom{2n+1}{2k+1}\theta(x,x)^{k}\theta(x,y)\theta(x,x)^{n-k}(w) ) $,
    \end{center} where $(x,v)$ appears $2n+1$ times. 
In  particular, \begin{center}
        $((y,u),(x,v),\dots,(x,v),(z,w)) = ((y,x,\dots,x,z), \theta(x,z)\theta(x,x)^{\frac{p-1}{2}}(u) -\theta(y,z)\theta(x,x)^{\frac{p-1}{2}}(v) + \theta(x,y)\theta(x,x)^{\frac{p-1}{2}}(w) - \theta(x,x)^{\frac{p-1}{2}}\theta(y,x)(w))  $,
    \end{center} where $(x,v)$ occurs $p$ times.
\end{prop}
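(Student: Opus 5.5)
The plan is to prove the general formula by induction on $n$, working directly in the semi-direct product $T\oplus V$ with the bracket
$$[(a,\alpha),(b,\beta),(c,\gamma)]_V=([a,b,c],\ \theta(b,c)\alpha-\theta(a,c)\beta+D(a,b)\gamma),$$
and then to specialize to $2n+1=p$. The iterated product $((y,u),(x,v),\dots,(x,v),(z,w))$ with $2n+1$ copies of $(x,v)$ splits into two stages. First comes a ``head'' $H_n=((y,u),(x,v),\dots,(x,v))$ with $2n$ copies of $(x,v)$, obtained by applying the operator $[\,\cdot\,,(x,v),(x,v)]_V$ $n$ times. Then comes one final bracket $[H_n,(x,v),(z,w)]_V$. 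The $T$-component is simply $(y,x,\dots,x,z)$, so all the work is in the $V$-component.

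For the head, I will track the $V$-component of $H_k$ inductively. One step sends $(A,\alpha)$ to
$$\bigl([A,x,x],\ \theta(x,x)\alpha-\theta(A,x)v+D(A,x)v\bigr),$$
using $D(x,x)=0$. Thus the $u$-part is just $\theta(x,x)^k u$. The $v$-part accumulates terms $\theta(A_j,x)v$ and $\theta(x,A_j)v$ with $A_j=(y,x,\dots,x)$ ($2j$ copies of $x$), each propagated by powers of $\theta(x,x)$. To put these into the normal form of the statement, I will use identity 2 of a representation with the first two arguments equal to $x$. Writing $t$ for the second argument of $\theta$ (so as not to clash with the element $z$ of the statement), this gives
$$\theta([a,x,x],t)+\theta(x,[a,x,t])=[D(a,x),\theta(x,t)]$$
together with its companions. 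Iterating them expresses $\theta(A_j,x)$ and $\theta(x,A_j)$ as nested commutators with $\theta(x,x)$ applied to $\theta(y,x)$ and $\theta(x,y)$. The resulting expressions are of the form $\sum_k\binom{2j}{k}\theta(x,x)^{\,\cdot}\,\theta(\cdot,\cdot)\,\theta(x,x)^{\,\cdot}$, exactly as in the Leibniz expansion of $\mathrm{ad}_a^m(b)$ in an associative algebra. Summing the geometric-type contributions over $j$ and applying Pascal's rule should produce the coefficients $\binom{2n+1}{2k}$ and $\binom{2n+1}{2k+2}$.

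In the final bracket with $(x,v),(z,w)$, the $V$-component is $\theta(x,z)(\text{head}_V)-\theta(A_n,z)v+D(A_n,x)w$. The first term yields the $\theta(x,z)\theta(x,x)^n u$ term and the two $\theta(x,z)\sum$ terms. The term $-\theta(A_n,z)v$ should reduce, via identity 1 of a representation, to $-\theta(y,z)\theta(x,x)^n v$ plus terms absorbed into the sums. The term $D(A_n,x)w$ expands, by the same commutator expansion, into the two sums with $\binom{2n+1}{2k+1}$. This bookkeeping of the $v$-terms is the main obstacle. If it becomes unwieldy, I would instead pass to an injective embedding of $T\oplus V$ in a Lie algebra (the standard embedding $L_S$). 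There the product becomes $\mathrm{ad}_{(x,v)}^{2n+1}$ applied to $(y,u)$, bracketed with $(z,w)$, and the binomial coefficients come directly from expanding the adjoint power in the enveloping algebra.

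For the particular case $2n+1=p$, all binomials $\binom{p}{j}$ with $0<j<p$ vanish in characteristic $p$. Both $v$-sums therefore disappear, since their indices $2k$ and $2k+2$ range over $2,\dots,p-1$. In the $w$-sums only the index $k=n$ survives, with coefficient $\binom{p}{p}=1$. Rewriting the surviving $w$-term using $D(x,y)=\theta(y,x)-\theta(x,y)$ gives the displayed formula.
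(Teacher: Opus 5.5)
\textbf{Verdict.} Your strategy is sound, but the last step is wrong and the central computation is only announced, not carried out. The paper gives no proof of this statement; it imports it, together with Propositions~\ref{p5.6} and~\ref{p5.12}, from the cited work. A direct computation in $T\oplus V$ with exactly those two formulas is what your plan amounts to.

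\textbf{The final step fails.} The general formula as printed is false. For $n=0$ its two $w$-sums cancel. But $[(y,u),(x,v),(z,w)]_V$ has $V$-component $\theta(x,z)(u)-\theta(y,z)(v)+D(y,x)(w)$, where $D(y,x)=\theta(x,y)-\theta(y,x)$ is nonzero in general. Expand $D(A_n,x)(w)=\theta(x,A_n)(w)-\theta(A_n,x)(w)$ using Proposition~\ref{p5.6} and $\binom{2n}{2k}+\binom{2n}{2k+1}=\binom{2n+1}{2k+1}$. What you actually get is
\[ \sum_{k=0}^n\binom{2n+1}{2k+1}\theta(x,x)^{n-k}\theta(x,y)\theta(x,x)^k(w)-\sum_{k=0}^n\binom{2n+1}{2k+1}\theta(x,x)^{k}\theta(y,x)\theta(x,x)^{n-k}(w). \]
So the last sum must contain $\theta(y,x)$, not $\theta(x,y)$. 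With this correction, the case $2n+1=p$ gives the displayed $w$-terms directly, with no rewriting needed.

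Your claim that this expansion reproduces the printed sums is therefore wrong. The closing move ``rewrite using $D(x,y)=\theta(y,x)-\theta(x,y)$'' cannot repair it. Starting from the printed formula, the surviving terms are $\theta(x,y)\theta(x,x)^n(w)-\theta(x,x)^n\theta(x,y)(w)$. This differs from the displayed expression by $\theta(x,x)^nD(x,y)(w)$, which is nonzero in general. For example, the adjoint representation of the Lie triple system of $\mathfrak{sl}_2$ with $x=h$, $y=e$, $w=h$ gives $-4^{n+1}e$. You should state and prove the corrected general formula.

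\textbf{The $v$-bookkeeping is missing.} This is the heart of the proof, and one ingredient is misattributed. Identity~1 alone cannot reduce $\theta(A_n,z)$, because the bracket sits in the first slot. Apply identity~2 to $(a,x,x,z)$ and then identity~1 to $(x,a,x,z)$ to get
\[ \theta([a,x,x],z)=\theta(x,z)\bigl(\theta(a,x)-2\theta(x,a)\bigr)+\theta(a,z)\theta(x,x). \]
Iterating this gives Proposition~\ref{p5.12}.

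To close the argument, take your recursion $\alpha_{k+1}=\theta(x,x)\alpha_k+\bigl(\theta(x,A_k)-2\theta(A_k,x)\bigr)(v)$. With Proposition~\ref{p5.6} and Pascal's rule (used twice), prove by induction
\[ \alpha_n=\theta(x,x)^n(u)-\sum_{k=1}^{n}\binom{2n}{2k-1}\theta(x,x)^{k-1}\theta(y,x)\theta(x,x)^{n-k}(v)+\sum_{k=0}^{n-1}\binom{2n}{2k+2}\theta(x,x)^{n-k-1}\theta(x,y)\theta(x,x)^{k}(v). \]
Then form $\theta(x,z)\alpha_n-\theta(A_n,z)(v)$ using Proposition~\ref{p5.12}. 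The identities $\binom{2n}{2k-1}+\binom{2n}{2k}=\binom{2n+1}{2k}$ and $\binom{2n}{2k+1}+\binom{2n}{2k+2}=\binom{2n+1}{2k+2}$ then give exactly the $u$- and $v$-terms of the statement.

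\textbf{The fallback needs one more observation.} Going through $L_S(T\oplus V)$ also works, but not by ``expanding in the enveloping algebra'' alone. You need that $V\oplus[T,V]$ is an abelian ideal of $L_S(T\oplus V)$ on which $\theta(a,b)=\ad_b\ad_a$. Then only terms linear in $u,v,w$ survive, and every even word in $\ad_x,\ad_y,\ad_z$ applied to $V$ factors into $\theta$'s. The hockey-stick identity $\sum_{j=i}^{2n}\binom{j}{i}=\binom{2n+1}{i+1}$ then produces the corrected coefficients at once.
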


\begin{thm}\label{thm5.7}
     Let $T$ be a restricted Lie triple system over a field $\mathbf{K} $ of characteristic $p>2 $. Let $(V,\theta)$ be a restricted representation of $T$. Assume the operation $[\cdot,\cdot,\cdot]_V: (T\oplus V)\times (T\oplus V)\times (T\oplus V)\to T\oplus V  $ is defined  by \begin{center}
        $[(x,a),(y,b),(z,c)]_V=([x,y,z], \theta(y,z)(a)-\theta(x,z)(b)+ D(x,y)(c)) .$ 
    \end{center}  Then there exists a $p$-map on $T\oplus V$ given by the formula \begin{center}
        $(x_i,v_i)^{[p]}=(x_i^{[p]},\theta(x_i,x_i)^{\frac{p-1}{2}}(v_i))$
    \end{center} for any basis $(x_i,v_i) $ of $T\oplus V $.
\end{thm}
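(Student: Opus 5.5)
The strategy is to apply the Jacobson-type criterion for Lie triple systems, \hyperref[thm5.3]{Theorem~\ref*{thm5.3}}, to the semi-direct product $T\oplus V$. We already know that $T\oplus V$ with $[\cdot,\cdot,\cdot]_V$ is a Lie triple system. It therefore suffices to show that for every element $(x,v)\in T\oplus V$, and in particular for each basis element $(x_i,v_i)$, the candidate $(x^{[p]},\theta(x,x)^{\frac{p-1}{2}}(v))$ satisfies
\begin{equation*}
[(y,u),(x^{[p]},\theta(x,x)^{\frac{p-1}{2}}(v)),(z,w)]_V=((y,u),(x,v),\dots,(x,v),(z,w))
\end{equation*}
for all $(y,u),(z,w)\in T\oplus V$, with $(x,v)$ appearing $p$ times. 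Theorem~\ref{thm5.3} then yields a unique $p$-map on $T\oplus V$ taking these values on the chosen basis. Since the identity holds for every element, any basis can be used.

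The right-hand side is supplied by the particular case of \hyperref[pro5.7]{Proposition~\ref*{pro5.7}}. It equals $(y,x,\dots,x,z)$ in the first component, and in the second component
\begin{equation*}
\theta(x,z)\theta(x,x)^{\frac{p-1}{2}}(u)-\theta(y,z)\theta(x,x)^{\frac{p-1}{2}}(v)+\theta(x,y)\theta(x,x)^{\frac{p-1}{2}}(w)-\theta(x,x)^{\frac{p-1}{2}}\theta(y,x)(w).
\end{equation*}

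For the left-hand side, the definition of $[\cdot,\cdot,\cdot]_V$ gives the first component $[y,x^{[p]},z]$. By axiom 3 of Definition~\ref{D5.1}, this equals $(y,x,\dots,x,z)$. The second component is
\begin{equation*}
\theta(x^{[p]},z)(u)-\theta(y,z)\theta(x,x)^{\frac{p-1}{2}}(v)+\bigl(\theta(x^{[p]},y)-\theta(y,x^{[p]})\bigr)(w).
\end{equation*}
We then use the two restricted-representation identities:
\begin{itemize}
\item $\theta(x^{[p]},z)=\theta(x,z)\theta(x,x)^{\frac{p-1}{2}}$ matches the $u$-term.
\item $\theta(x^{[p]},y)=\theta(x,y)\theta(x,x)^{\frac{p-1}{2}}$ and $\theta(y,x^{[p]})=\theta(x,x)^{\frac{p-1}{2}}\theta(y,x)$ together match the $w$-terms.
\item The $v$-term is already identical on both sides.
\end{itemize}
So the two sides coincide.

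The main obstacle is Proposition~\ref{pro5.7} itself, more precisely its reduction from the general $2n+1$ formula to $p=2n+1$. That reduction needs the binomial coefficients $\binom{p}{j}$ to vanish mod $p$ for $0<j<p$, leaving only the extreme terms. I would double-check this step, in particular that the boundary terms with coefficients $\binom{p}{p}=\binom{p}{1}\cdot 0+1$ and so on survive with the correct signs. Since the result is stated earlier, I will take it as given.

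A second point to verify is the hypothesis of Theorem~\ref{thm5.3} that $v_i$ lie in $T_p$. I would interpret this as the requirement that the candidate lies in the system, which is automatic here. Uniqueness is not claimed beyond the chosen basis, as in the remark following Proposition~\ref{p3.7}.
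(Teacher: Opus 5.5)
Your proposal is correct and is essentially the paper's own proof. Like the paper, you verify $[(y,u),(x^{[p]},\theta(x,x)^{\frac{p-1}{2}}(v)),(z,w)]_V=((y,u),(x,v),\dots,(x,v),(z,w))$ for all elements, using axiom~3 of Definition~\ref{D5.1}, the two restricted-representation identities and the $p$-case of Proposition~\ref{pro5.7}, and then apply Theorem~\ref{thm5.3} to an arbitrary basis. If you carry out the check you flag, the last sum in the printed general formula of Proposition~\ref{pro5.7} should involve $\theta(x,x)^k\theta(y,x)\theta(x,x)^{n-k}$, not $\theta(x,x)^k\theta(x,y)\theta(x,x)^{n-k}$ (for $n=0$ the $w$-part must be $D(y,x)(w)$); with that correction only the $\binom{p}{p}=1$ terms survive, giving exactly the particular case you use.
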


\begin{proof}
     Using \hyperref[pro5.7]{Proposition ~\ref*{pro5.7}}, we see that we can use Jacobson's Theorem. Indeed, we have \begin{align*}
        [(y&,u),(x^{[p]},\theta(x,x)^{\frac{p-1}{2}}(v)),(z,w)] \\ &= ([y,x^{[p]},z], \theta(x^{[p]},z)(u) - \theta(y,z)\theta(x,x)^{\frac{p-1}{2}}(v)\\& \quad\quad\quad\quad\quad\quad\quad\quad\quad + \theta(x^{[p]},y)(w) - \theta(y,x^{[p]})(w)) \\ & = ([y,x^{[p]},z], \theta(x,z)\theta(x,x)^{\frac{p-1}{2}}(u) - \theta(y,z)\theta(x,x)^{\frac{p-1}{2}}(v) \\ &\quad\quad\quad\quad\quad\quad\quad\quad\quad + \theta(x,y)\theta(x,x)^{\frac{p-1}{2}}(w) - \theta(x,x)^{\frac{p-1}{2}}\theta(y,x)(w)) \\ & = ((y,u),(x,v),\dots,(x,v),(z,w)).
    \end{align*} So, for any basis $((x_i,v_i))_{i\in I} $ of $T $, there exists a $p$-map $(\cdot)^{[p]}$ on $T\oplus V $ such that $(x_i,v_i)^{[p]}= (x_i^{[p]},\theta(x_i,x_i)^{\frac{p-1}{2}}(v_i)) $.
\end{proof}

\begin{rmq}
    The proof of this theorem uses Jacobson's Theorem. In fact, we prove that for all $(x,u),(y,v)$ and $(z,w)$ in $ T\oplus V$, $$[(y,u),(x^{[p]},\theta(x,x)^{\frac{p-1}{2}}(v)),(z,w)]= ((y,u),(x,v),\dots,(x,v),(z,w)). $$ Thus, each choice of a basis of $T\oplus V $ gives rise to a $p$-map by Jacobson's Theorem. If $T\oplus V$ is centerless then all these $p$-map coincide and the map $(x,y)\mapsto (x^{[p]},\theta(x,x)^{\frac{p-1}{2}}(y)) $ is the unique $p$-map on $T\oplus V$.
\end{rmq}

In the case of the adjoint representation of $T$, the $p$-map on $T\oplus T $ is given, on a basis, by $(x_i,y_i)^{[p]}= (x_i^{[p]}, (y_i,x_i,\ldots,x_i)) $. This leads to the following definition and  result.

\begin{defi}\label{d5.6}
    Let $T$ be a restricted Lie triple system and $P$ be a Rota-Baxter operator on $T$. We say that $P$ is  restricted if in addition the following identity holds: \begin{center}
        $P(x)^{[p]}= P((x,Px,\ldots, Px)) $ for all $x\in T$.
    \end{center} 
\end{defi}

\begin{ex}
    Let $(L,[\cdot,\cdot])$ be a restricted Lie algebra endowed with a restricted Rota-Baxter operator $P$. Then $P$ is a restricted Rota-Baxter operator of the induced restricted Lie triple system $(L,[\cdot,\cdot,\cdot]) $.
\end{ex}

\begin{prop}\label{p5.8}   
    Let $T$ be a restricted Lie triple system. A linear map $P:T\to T$ is a restricted Rota-Baxter operator if and only if $Gr(P) $ is a restricted Lie subsystem of $T\oplus T$, where $T\oplus T $ has a $p$-map defined in  \hyperref[thm5.7]{Theorem~\ref*{thm5.7}}, with respect to the adjoint representation for any basis of $T\oplus T $. 
\end{prop}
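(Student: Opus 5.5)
The plan follows the proof of Proposition~\ref{p3.10}. The subsystem part is already handled: by the earlier characterization via graphs, $P$ is a Rota-Baxter operator if and only if $Gr(P)$ is a subsystem of the semi-direct product $T\oplus T$ with the adjoint representation. So it remains to show that a subsystem $Gr(P)$ is closed under the $p$-map of Theorem~\ref{thm5.7} exactly when $P$ satisfies $P(x)^{[p]}=P((x,Px,\ldots,Px))$.

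First I make the $p$-map explicit on $T\oplus T$. For the adjoint representation, $\theta(a,b)(c)=[c,a,b]$, so $\theta(x,x)^{\frac{p-1}{2}}(y)=(y,x,\ldots,x)$ with $x$ occurring $p-1$ times. On a chosen basis $(x_i,y_i)$ the $p$-map is therefore $(x_i,y_i)^{[p]}=(x_i^{[p]},(y_i,x_i,\ldots,x_i))$. On a graph element $(Px,x)$ this gives $(P(x)^{[p]},(x,Px,\ldots,Px))$, which lies in $Gr(P)$ if and only if $P(x)^{[p]}=P((x,Px,\ldots,Px))$.

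Next I need a reduction lemma: a subsystem $H$ of a restricted Lie triple system is closed under the $p$-map as soon as $u^{[p]}\in H$ for all $u$ in some basis of $H$. To prove it, write $u=\sum\alpha_i u_i$ and expand using axioms 1 and 2 of Definition~\ref{D5.1}. Each term $s_i(\cdot,\cdot)$ is a coefficient of an iterated bracket $(x,Zx+y,\ldots,Zx+y)$ with entries in $H$, so it lies in $H$; by induction on the number of summands, $u^{[p]}\in H$.

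For the forward direction, take a basis $(Px_i,x_i)$ of $Gr(P)$ and complete it to a basis of $T\oplus T$. Using the $p$-map attached to this basis and the restricted condition on each $x_i$, we get $(Px_i,x_i)^{[p]}\in Gr(P)$, and the reduction lemma then gives closure. For the converse, for any nonzero $x$ I complete $(Px,x)$ to a basis of $T\oplus T$. By hypothesis $Gr(P)$ is closed under the resulting $p$-map, so $(P(x)^{[p]},(x,Px,\ldots,Px))\in Gr(P)$, which forces $P(x)^{[p]}=P((x,Px,\ldots,Px))$. The case $x=0$ is trivial.

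The main obstacle is the dependence of the $p$-map on the basis. The argument requires that the $p$-map from Theorem~\ref{thm5.7}, evaluated on the basis vectors used, really is given by the formula $(x_i^{[p]},\theta(x_i,x_i)^{\frac{p-1}{2}}(v_i))$. The requirement that closure hold "for any basis" must be read so that the converse may choose a basis containing $(Px,x)$. The proof of Theorem~\ref{thm5.7} via Theorem~\ref{thm5.3} guarantees exactly this, since it produces a $p$-map for every basis with the prescribed values on that basis.
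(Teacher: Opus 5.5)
Your proposal is correct and follows the paper's proof essentially step for step. Like the paper, you reduce to closure of $Gr(P)$ under the $p$-map, extend a basis $(Px_i,x_i)$ of $Gr(P)$ to a basis of $T\oplus T$ for the forward direction, and complete $(Px,x)$ to a basis for the converse; your reduction lemma only spells out what the paper asserts without proof.

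One point is shared with the paper: your forward direction only treats $p$-maps coming from bases adapted to $Gr(P)$, while the converse uses all bases. This is harmless, because $(x,v)\mapsto(x^{[p]},\theta(x,x)^{\frac{p-1}{2}}(v))$ itself satisfies Jacobson's formula (run the derivative argument of Theorem~\ref{T5.9} in $T\oplus T$, using Proposition~\ref{p5.6} for the iterated brackets). Hence all the $p$-maps of Theorem~\ref{thm5.7} coincide with it.
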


\begin{proof}
    It remains to show that $Gr(P) $ is closed under the $p$-map on $T\oplus T $ if and only if $P$ is restricted. First, we observe that for $Gr(P) $ to be closed under a $p$-map it is sufficient that for a basis $(Px_i,x_i) $ of $Gr(P)$, $(Px_i,x_i)^{[p]}\in Gr(P) $. The $p$-map defined on the semi-direct product $T\oplus T$ with respect to the adjoint representation is given, on a basis $(x_i,y_i) $, by $(x_i,y_i)^{[p]}=(x_i^{[p]},(y_i,x_i,\ldots,x_i)) $. We  choose a basis of $T\oplus T$ by taking a basis $(Px_i,x_i) $ of $Gr(P)$ and completing it to a basis of $T\oplus T$. Therefore, a Rota-Baxter operator must satisfy the condition $P(x_i)^{[p]}=P((x_i,Px_i,\ldots,Px_i))) $. In this case, $(Px_i,x_i)^{[p]}\in Gr(P)$ and $Gr(P) $ is a restricted subsystem of the restricted Lie triple system $T\oplus T$. This condition is exactly the definition of a restricted Rota-Baxter operator. Conversely, assume that $Gr(P) $ is stable under any $p$-map induced by a choice of basis of $T\oplus T$. Let $x\in T$ be a nonzero element. $(Px,x) $ can be completed to a basis of $T\oplus T $. Since $Gr(P)$ is stable under the corresponding $p$-map, we have $(Px,x)^{[^p]}=(P(x)^{[p]},(x,Px,\cdots,Px)\in Gr(P) $, which shows that $P $ is a restricted Rota-Baxter operator.
\end{proof}

\begin{rmq} If $T$ is centerless, then the semi-direct product $T\oplus T$ is also centerless. The map $(x,y)\mapsto (x^{[p]},(y,x,\ldots,x)) $ is then a $p$-map. 
   As in the case of restricted Rota-Baxter operators on restricted Lie algebras, the condition on the graph of $P$ shows that it suffices to verify that $P(e_i)^{[p]}=P((e_i,Pe_i,\ldots, Pe_i)) $ for a basis $(e_i)$ of $T$ to prove that $P$ is restricted.
\end{rmq}

\begin{ex}
   We consider  \hyperref[exa4.7]{Example~\ref*{exa4.7}}. Let $T$ be a $2$-dimensional Lie triple system with a basis $(e_1,e_2)$ and a bracket defined by $[e_1,e_2,e_2]=e_1, [e_2,e_1,e_2]=-e_1 $ and the other brackets are zero. By Jacobson's theorem on Lie triple systems, we see that we can define a $p$-map on $T$ by setting $$e_1^{[p]}=0, \ e_2^{[p]}=e_2. $$ 
   Notice that $T$ is centerless. Indeed, let $x=x_1e_1+x_2e_2 $ be in the center of $T$. We have that $[x,e_2,e_2]=0 $ so $x_1=0 $. We have that $[x,e_1,e_2]=0 $ so $x_2=0$ and $x=0$. Then, using the previous remark we see that the Rota-Baxter operator defined by $ P=\begin{pmatrix}
0 & a \\
0 & 0 \\
\end{pmatrix} $ is a restricted Rota-Baxter operator.
\end{ex}

\subsection{Restricted pre-Lie triple systems}

The purpose of this section is introduce the notion of restricted pre-Lie triple system, and to verify whether Jacobson's formula holds for pre-Lie triple systems in positive characteristic.

Dzhumadil'daev showed that Jacobson's identities hold in pre-Lie algebras. This   motivated  his definition of restricted pre-Lie algebras, which we call trivially restricted pre-Lie algebras. We will show that we also have Jacobson's identities in pre-Lie triple systems. Then we will define trivially restricted and restricted pre-Lie triple systems. For that, we first need  to establish some technical lemmas.

\begin{prop}\label{p5.6}\cite{BM26}
    Let $T$ be a Lie triple system and $\theta$ be a representation of $T$. Then for all $x,y\in T$ we have \begin{center}
        $\theta(x,[[[\dots[y,x,x],x,x]\dots],x,x])= \sum\limits_{k=0}^n\binom{2n}{2k}\theta(x,x)^{n-k}\theta(x,y)\theta(x,x)^k - \sum\limits_{k=0}^{n-1}\binom{2n}{2k+1}\theta(x,x)^k\theta(y,x)\theta(x,x)^{n-k} $,
    \end{center} and \begin{center}
        $\theta([[\dots[y,x,x]\dots],x,x],x)= \sum\limits_{k=0}^n\binom{2n}{2k}\theta(x,x)^k\theta(y,x)\theta(x,x)^{n-k} - \sum\limits_{k=0}^{n-1}\binom{2n}{2k+1}\theta(x,x)^{n-k}\theta(x,y)\theta(x,x)^k $,
    \end{center} where $x$ occurs $2n$ times in the brackets.
\end{prop}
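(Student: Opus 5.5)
The plan is induction on $n$. Both formulas come from the two representation identities applied to the iterated bracket. Fix $x,y\in T$ and set $y_0=y$ and $y_{m+1}=[y_m,x,x]$, so that $y_n=[[\dots[y,x,x]\dots],x,x]$ with $x$ occurring $2n$ times. Put
$$a=\theta(x,x),\quad b=\theta(x,y),\quad c=\theta(y,x),\quad X_m=\theta(x,y_m),\quad Y_m=\theta(y_m,x).$$
The case $n=0$ is trivial: $X_0=b$ and $Y_0=c$, matching the right-hand sides with empty second sums.

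\textbf{Step 1: a coupled recursion.} Apply the first representation identity with the substitution $(x,y,z,t)\mapsto(x,y_m,x,x)$. Using $D(y_m,x)=X_m-Y_m$, it becomes
$$aX_m-Y_ma-X_{m+1}+(X_m-Y_m)a=0,\qquad\text{i.e.}\qquad X_{m+1}=aX_m+X_ma-2Y_ma.$$
Next apply the second identity with $x\mapsto y_m$, $y\mapsto x$, $z=t=x$. It gives
$$a(X_m-Y_m)-(X_m-Y_m)a+Y_{m+1}+X_{m+1}=0.$$
Substituting the first recursion into this yields
$$Y_{m+1}=Y_ma+aY_m-2aX_m.$$
This is a closed linear system expressing $(X_{m+1},Y_{m+1})$ in terms of $(X_m,Y_m)$.

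\textbf{Step 2: generating-function bookkeeping.} Let $L$ and $R$ denote left and right composition by $a$ on $\mathrm{End}(V)$. These commute, so all expressions below lie in the commutative polynomial ring $\mathbf{K}[L,R]$ acting on $b$ and $c$. The recursion reads
$$X_{m+1}=(L+R)X_m-2RY_m,\qquad Y_{m+1}=(L+R)Y_m-2LX_m.$$
Formally adjoin $\lambda,\mu$ with $\lambda^2=L$ and $\mu^2=R$; this is a purely formal device, valid since we only compare polynomial identities in $\mathbf{K}[L,R]$. The claimed formulas then become
$$X_n=\tfrac12\big[(\lambda+\mu)^{2n}+(\lambda-\mu)^{2n}\big]b-\tfrac{\mu}{2\lambda}\big[(\lambda+\mu)^{2n}-(\lambda-\mu)^{2n}\big]c,$$
$$Y_n=\tfrac12\big[(\lambda+\mu)^{2n}+(\lambda-\mu)^{2n}\big]c-\tfrac{\lambda}{2\mu}\big[(\lambda+\mu)^{2n}-(\lambda-\mu)^{2n}\big]b.$$
Expanding these recovers exactly the sums with $\binom{2n}{2k}$ and $\binom{2n}{2k+1}$ in the statement. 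The ordering of factors in each term is encoded by the powers of $L$ (factors of $a$ on the left) and of $R$ (factors of $a$ on the right).

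\textbf{Step 3: closing the induction.} To finish, I would note that the recursion matrix $\begin{pmatrix}L+R&-2R\\-2L&L+R\end{pmatrix}$ has eigenvalues $(\lambda\pm\mu)^2$. Its eigenvectors are $(1,\mp\lambda/\mu)$ in suitable coordinates. Hence the solution with initial data $(b,c)$ is the displayed closed form, and the induction step reduces to checking that $(\lambda+\mu)^2(\lambda+\mu)^{2n}=(\lambda+\mu)^{2n+2}$. Alternatively, one can verify the induction step directly with Pascal's rule, using $\binom{2n+2}{j}=\binom{2n}{j}+2\binom{2n}{j-1}+\binom{2n}{j-2}$.

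\textbf{Expected obstacle.} The main difficulty is the index bookkeeping in Step 2. One must make sure the coefficients of $b$ and $c$ land with the correct left/right placement of the factors $a$, and that the upper limits of the sums (up to $n$ versus up to $n-1$) come out right. I would handle this by working entirely in $\mathbf{K}[L,R]$ and only translating back to compositions at the end. Deriving the recursion itself in Step 1 is routine.
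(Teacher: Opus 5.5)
Your proof is correct. The recursions follow from the representation axioms exactly as the paper states them. The first axiom with $(x,y,z,t)\mapsto(x,y_m,x,x)$ gives $X_{m+1}=aX_m+X_ma-2Y_ma$. The second axiom with $(x,y,z,t)\mapsto(y_m,x,x,x)$ then gives $Y_{m+1}=aY_m+Y_ma-2aX_m$. Your closed forms agree with the statement after reindexing and using $\binom{2n}{j}=\binom{2n}{2n-j}$.

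The paper gives no proof of its own here; the result is imported from \cite{BM26}. The analogous computations it does carry out, such as Proposition~\ref{p5.7}, are direct inductions in which binomial sums are shifted and merged by hand. That is essentially your fallback Pascal-rule route. Your main route handles the bookkeeping differently. You encode the placement of the factors $\theta(x,x)$ by the commuting operators $L,R$ and diagonalize the $2\times2$ recursion. This shows that the two sums are the even and odd parts of $(\lambda\pm\mu)^{2n}$, and it produces both formulas at once. The induction step then reduces to $E_{n+1}=(\lambda^2+\mu^2)E_n+2\lambda\mu F_n$ and $F_{n+1}=(\lambda^2+\mu^2)F_n+2\lambda\mu E_n$, where $E_n,F_n$ are those even and odd parts.

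There is one point you should make explicit. The subalgebra of $\mathrm{End}(\mathrm{End}(V))$ generated by the actual operators $L$ and $R$ is in general not a polynomial ring; for instance, take $\theta(x,x)$ nilpotent. So you cannot adjoin square roots of $L,R$ there and divide by them. The fix is as follows:
\begin{enumerate}
\item Let $\mathcal{L},\mathcal{R}$ be indeterminates, and define the coefficient polynomials of $b$ and $c$ in $X_m,Y_m$ in $\mathbf{Z}[\mathcal{L},\mathcal{R}]$ by the recursion.
\item Prove their closed forms in $\mathbf{Q}(\lambda,\mu)$ with $\mathcal{L}=\lambda^2$, $\mathcal{R}=\mu^2$. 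There $\lambda\mu\neq0$ and the eigenvalues $(\lambda\pm\mu)^2$ are distinct.
\item Observe that the resulting polynomials have integer coefficients.
\item Specialize $\mathcal{L}\mapsto L$, $\mathcal{R}\mapsto R$. This is a ring homomorphism because $L$ and $R$ commute.
\end{enumerate}
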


\begin{prop}\label{p5.7}
    Let $(T,\left\{ \cdot,\cdot,\cdot\right\}) $ be a pre-Lie triple system and  $\theta(x,y)(z)= \left\{ z,x,y\right\} $ be the representation associated with the Lie triple system $(T,[\cdot,\cdot,\cdot]_C) $. Then for all $x,u\in T$, we have \begin{center}
        $(x,u,\ldots,u)_C= \theta(u,u)^n(x) + \sum\limits_{k=0}^{n-1}\binom{2n}{2k}\theta(u,u)^k\theta(u,x)\theta(u,u)^{n-1-k}(u) - \sum\limits_{k=0}^{n-1}\binom{2n}{2k+1}\theta(u,u)^{n-1-k}\theta(x,u)\theta(u,u)^k(u) $.
    \end{center} where $u$ occurs $2n$ times.
\end{prop}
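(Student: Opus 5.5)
The proof is by induction on $n$. At each step I write one more application of $[\cdot,u,u]_C$ in terms of $\theta$, and I use Proposition~\ref{p5.6} to rewrite the terms in which $x$ sits inside an argument of $\theta$. The basic observation is that, by the definition of $[\cdot,\cdot,\cdot]_C$ and $\theta(b,c)(a)=\{a,b,c\}$,
\begin{equation*}
[a,u,u]_C=\{a,u,u\}-\{u,a,u\}+\{u,u,a\}-\{u,a,u\}=\theta(u,u)(a)+\theta(u,a)(u)-2\,\theta(a,u)(u)
\end{equation*}
for all $a,u\in T$.

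\textbf{Base case.} For $n=1$ the identity above, with $a=x$, is exactly the claimed formula. The two sums reduce to $\binom{2}{0}\theta(u,x)(u)$ and $-\binom{2}{1}\theta(x,u)(u)$.

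\textbf{Inductive step.} Set $y=(x,u,\ldots,u)_C$ with $2n$ copies of $u$. Then $(x,u,\ldots,u)_C$ with $2n+2$ copies equals $[y,u,u]_C=\theta(u,u)(y)+\theta(u,y)(u)-2\theta(y,u)(u)$, and I treat the three terms separately.

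For $\theta(u,u)(y)$, apply the induction hypothesis and then $\theta(u,u)$. This gives $\theta(u,u)^{n+1}(x)$ together with the two sums, each with one extra factor $\theta(u,u)$ on the left.

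For $\theta(u,y)(u)$ and $\theta(y,u)(u)$, note that $\theta$ is a representation of the Lie triple system $(T,[\cdot,\cdot,\cdot]_C)$ by the earlier proposition. So Proposition~\ref{p5.6} applies with $x$ replaced by $u$ and $y$ replaced by $x$. It expresses both operators as sums of words $\theta(u,u)^{a}\theta(u,x)\theta(u,u)^{b}$ and $\theta(u,u)^{a}\theta(x,u)\theta(u,u)^{b}$ with binomial coefficients $\binom{2n}{\cdot}$. I then evaluate these words at $u$.

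\textbf{Collecting coefficients.} Every term now has the form $\theta(u,u)^{j}\theta(u,x)\theta(u,u)^{n-j}(u)$ or $\theta(u,u)^{n-j}\theta(x,u)\theta(u,u)^{j}(u)$. For each word, its coefficient comes from three sources:
\begin{itemize}
\item the shifted induction-hypothesis term, with coefficient $\binom{2n}{2j-2}$ (respectively $\binom{2n}{2j-1}$);
\item the $\theta(u,y)$ term, with coefficient $\binom{2n}{2j}$ or $-\binom{2n}{2j+1}$;
\item the $-2\theta(y,u)$ term, with coefficient $2\binom{2n}{2j-1}$ or $-2\binom{2n}{2j}$.
\end{itemize}
The coefficients must combine to $\binom{2n+2}{2j}$ and $-\binom{2n+2}{2j+1}$. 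This follows from the identity $\binom{2n+2}{m}=\binom{2n}{m-2}+2\binom{2n}{m-1}+\binom{2n}{m}$, which is the coefficient of $t^m$ in $(1+t)^{2n}(1+t)^2$.

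The main obstacle is this bookkeeping. One has to match index ranges and boundary terms carefully (for instance $j=0$ and $j=n$, where some of the three contributions are absent). One also has to check that the placement of $\theta(u,u)$ to the left or right of $\theta(u,x)$ and $\theta(x,u)$ in Proposition~\ref{p5.6} agrees exactly with the pattern in the target formula; the $\theta(u,u)$ factors do not commute with $\theta(u,x)$ or $\theta(x,u)$, so the exponents on each side must match term by term.
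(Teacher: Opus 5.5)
Your proposal is correct and follows the paper's proof. The paper also inducts on $n$, expands $[y,u,u]_C=\theta(u,u)(y)+\theta(u,y)(u)-2\theta(y,u)(u)$, applies the induction hypothesis and Proposition~\ref{p5.6}, and collects terms via $\binom{2n+2}{m}=\binom{2n}{m-2}+2\binom{2n}{m-1}+\binom{2n}{m}$. There is one harmless slip in your bookkeeping for the words $\theta(u,u)^{n-j}\theta(x,u)\theta(u,u)^{j}(u)$: the $\theta(u,u)(y)$ term contributes $-\binom{2n}{2j+1}$, with no index shift, and the $\theta(u,y)(u)$ term contributes $-\binom{2n}{2j-1}$. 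This is the reverse of what you listed, but the total is unchanged.
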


\begin{proof}
    We perform by induction in $n$. The initialization is clear. Assume that the formula is true for $n\geq 1$. We have,  \begin{align*}
        [x,y,z]_C= &\left\{x,y,z\right\}-\left\{ y,x,z\right\} +  \left\{ z,y,x\right\} -\left\{ z,x,y\right\} \\  =& \theta(y,z)(x) - \theta(x,z)(y) + \theta(y,x)(z) - \theta(x,y)(z). 
    \end{align*} Thus we can compute $(x,u,\ldots,u)_C $ where $u$ occurs $2(n+1)$ times. Using the induction hypothesis and \hyperref[p5.6]{Proposition~\ref*{p5.6}}, we have \begin{align*}
        (x,u,\ldots,&u)_C = [(x,u,\ldots,u)_C,u,u]_C 
        \\ =& \theta(u,u)((x,u,\ldots,u)_C) - 2\theta((x,u,\ldots,u)_C,u)(u) + \theta(u,(x,u,\ldots,u)_C)(u) 
        \\  =&\theta(u,u)^{n+1}(x) - \sum\limits_{k=0}^{n-1}\tbinom{2n}{2k+1}\theta(u,u)^{n-k}\theta(x,u)\theta(u,u)^k(u) 
        \\ &+ \sum\limits_{k=0}^{n-1}\tbinom{2n}{2k}\theta(u,u)^{k+1}\theta(u,x)\theta(u,u)^{n-1-k}(u) -2\sum\limits_{k=0}^n\tbinom{2n}{2k}\theta(u,u)^k\theta(x,u)\theta(u,u)^{n-k}(u)
        \\& +2 \sum\limits_{k=0}^{n-1}\tbinom{2n}{2k+1}\theta(u,u)^{n-k}\theta(u,x)\theta(u,u)^k(u)  + \sum\limits_{k=0}^n\tbinom{2n}{2k}\theta(u,u)^{n-k}\theta(u,x)\theta(u,u)^k(u)
        \\& - \sum\limits_{k=0}^{n-1}\tbinom{2n}{2k+1}\theta(u,u)^k\theta(x,u)\theta(u,u)^{n-k}(u)
        \\ =&  \theta(u,u)^{n+1}(x) + \sum\limits_{k=1}^{n}\tbinom{2n}{2k-2}\theta(u,u)^{k}\theta(u,x)\theta(u,u)^{n-k}(u) 
        \\ & + \sum\limits_{k=0}^{n-1}(\tbinom{2n+1}{2k+1}+\tbinom{2n}{2k+1})\theta(u,u)^{n-k}\theta(u,x)\theta(u,u)^k(u) + \theta(u,x)\theta(u,u)^n 
        \\ &- \sum\limits_{k=1}^n\tbinom{2n}{2n-2k+1}\theta(u,u)^k\theta(x,u)\theta(u,u)^{n-k}(u) - \sum\limits_{k=0}^{n-1}\tbinom{2n+1}{2k+1}\theta(u,u)^k\theta(x,u)\theta(u,u)^{n-k}(u)
        \\ &- \theta(u,u)^n\theta(x,u) - \sum\limits_{k=0}^n\tbinom{2n}{2k}\theta(u,u)^k\theta(x,u)\theta(u,u)^{n-k}(u)
        \\ =&\theta(u,u)^{n+1}(x)+ \sum\limits_{k=0}^{n-1}(\tbinom{2n+1}{2k+1}+\tbinom{2n}{2k+1}+\tbinom{2n}{2k+2} )\theta(u,u)^{n-k}\theta(u,x)\theta(u,u)^k(u)\\ & + \theta(u,x)\theta(u,u)^n(u) 
         - \sum\limits_{k=1}^{n-1}(\tbinom{2n}{2k-1}+\tbinom{2n+1}{2k+1}+\tbinom{2n}{2k})\theta(u,u)^k\theta(x,u)\theta(u,u)^{n-k}(u)\\& - \theta(u,u)^n\theta(x,u)(u) 
         - \tbinom{2n+1}{1}\theta(x,u)\theta(u,u)^n(u)-\tbinom{2n}{1}\theta(u,u)^n\theta(x,u)(u) \\&- \theta(x,u)\theta(u,u)^n(u) - \theta(u,u)^n\theta(x,u)(u)
        \\ =& \theta(u,u)^{n+1}(x)+\sum\limits_{k=0}^{n}\tbinom{2n+2}{2k}\theta(u,u)^k\theta(u,x)\theta(u,u)^{n-k}(u) \\ &- \sum\limits_{k=0}^{n-1}\tbinom{2n+2}{2k+1}\theta(u,u)^{n-k}\theta(x,u)\theta(u,u)^k(u)
    \end{align*} The result follows. 
\end{proof}

\begin{cor}\label{c5.8}
     Let $(T,\left\{ \cdot,\cdot,\cdot\right\}) $ be a pre-Lie triple system and  $\theta(x,y)(z)= \left\{ z,x,y\right\} $ be the representation associated with the Lie triple system $(T,[\cdot,\cdot,\cdot]_C) $. Then,  for all $x,u\in T$,  we have \begin{center}
         $(x,u,\ldots,u)_C= \theta(u,u)^{\frac{p-1}{2}}(x) + \sum\limits_{k=0}^{\frac{p-1}{2}-1}\theta(u,u)^k\theta(u,x)\theta(u,u)^{\frac{p-1}{2}-1-k}(u) + \sum\limits_{k=0}^{\frac{p-1}{2}-1}\theta(u,u)^{\frac{p-1}{2}-1-k}\theta(x,u)\theta(u,u)^k(u) ,$
     \end{center} where $u$ occurs $p-1$ times.
\end{cor}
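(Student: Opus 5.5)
The plan is to specialize Proposition~\ref{p5.7} to $2n=p-1$, i.e. $n=\frac{p-1}{2}$, and then reduce the binomial coefficients modulo $p$. With $2n=p-1$ we have, for $0\leq j\leq p-1$, the standard congruence
\begin{equation*}
\tbinom{p-1}{j}\equiv (-1)^j \pmod p,
\end{equation*}
which follows from $\binom{p-1}{j}=\prod_{i=1}^{j}\frac{p-i}{i}$ and $p-i\equiv -i$. Since the base field has characteristic $p$, every binomial coefficient in Proposition~\ref{p5.7} can be replaced by the corresponding sign.

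Concretely, in the first sum of Proposition~\ref{p5.7} the coefficients are $\binom{p-1}{2k}\equiv 1$. This gives exactly $\sum_{k=0}^{\frac{p-1}{2}-1}\theta(u,u)^k\theta(u,x)\theta(u,u)^{\frac{p-1}{2}-1-k}(u)$. In the second sum the coefficients are $\binom{p-1}{2k+1}\equiv -1$, so the term $-\sum\binom{2n}{2k+1}\theta(u,u)^{n-1-k}\theta(x,u)\theta(u,u)^k(u)$ becomes $+\sum_{k=0}^{\frac{p-1}{2}-1}\theta(u,u)^{\frac{p-1}{2}-1-k}\theta(x,u)\theta(u,u)^k(u)$. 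The leading term $\theta(u,u)^{n}(x)$ becomes $\theta(u,u)^{\frac{p-1}{2}}(x)$ with no change. Together these give the stated formula, where $u$ occurs $2n=p-1$ times in $(x,u,\ldots,u)_C$.

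The argument needs $p>2$ so that $p-1$ is even and $n=\frac{p-1}{2}\geq 1$ is a legitimate value in Proposition~\ref{p5.7}; this is the standing assumption of the section. The only point requiring care is tracking the signs: the second sum of Proposition~\ref{p5.7} carries an explicit minus, which combines with $(-1)^{2k+1}$ to give a plus. I do not expect any real obstacle here, since the corollary is a direct specialization once the congruence $\binom{p-1}{j}\equiv(-1)^j$ is invoked.
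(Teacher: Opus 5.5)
Your proposal is correct and is the paper's own proof: specialize Proposition~\ref{p5.7} to $n=\frac{p-1}{2}$ and apply $\binom{p-1}{j}\equiv(-1)^j \pmod p$. This turns each $\binom{p-1}{2k}$ into $1$ and each $-\binom{p-1}{2k+1}$ into $+1$, which gives the stated formula.
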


\begin{proof}
    We apply  \hyperref[p5.7]{Proposition~\ref*{p5.7}}, with $n= \frac{p-1}{2}$, and use the number theoretic result $$\tbinom{p-1}{i}\equiv (-1)^i[p], $$ for all $ 0\leq i\leq p-1$.
\end{proof}

We have the following result.

\begin{thm}\label{T5.9}
    Let $(T\left\{ \cdot,\cdot,\cdot\right\})$ be a pre-Lie triple system and $\theta(x,y)(z)= \left\{ z,x,y\right\} $ be the representation associated with the Lie triple system $(T,[\cdot,\cdot,\cdot]_C) $. We define for $y\in T$, $$y^{\left\{p\right\}}= \theta(y,y)^{\frac{p-1}{2}}(y).  $$ Then, for all $x,y\in T$, we have $$(x+y)^{\left\{p\right\}}= x^{\left\{p\right\}} + y^{\left\{p\right\}} + \sum\limits_{i=1}^{p-1}s_i(x,y), $$ where $s_i(x,y)$ are the coefficients defined in the Lie triple system $(T,[\cdot,\cdot,\cdot]_C) $.
\end{thm}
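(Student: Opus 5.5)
The plan is the standard Jacobson argument: differentiate a polynomial in a formal variable. Set $n=\frac{p-1}{2}$ and work over $\mathbf{K}[Z]$, extending all structures by scalars. Consider the $T$-valued polynomial
$$F(Z)=(Zx+y)^{\left\{p\right\}}=\theta(Zx+y,Zx+y)^{n}(Zx+y).$$
Since $\theta$ is bilinear, $F(Z)$ is a product of $2n+1=p$ factors, each linear in $Z$. Hence $F(Z)=\sum_{j=0}^{p}c_jZ^j$ with $c_j\in T$. The constant term is $c_0=F(0)=y^{\left\{p\right\}}$. The leading coefficient is $c_p=\theta(x,x)^n(x)=x^{\left\{p\right\}}$, because choosing the $Zx$ part in every factor gives $Z^p\,x^{\left\{p\right\}}$. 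So the theorem reduces to showing $c_j=s_j(x,y)$ for $1\le j\le p-1$, and then evaluating at $Z=1$.

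To identify the middle coefficients, I will compute the formal derivative $F'(Z)$ by the Leibniz rule. Write $u=Zx+y$, so $\frac{d}{dZ}u=x$. Each of the $p$ occurrences of $u$ in $\theta(u,u)^n(u)$ is replaced by $x$ in turn. The occurrence in the argument gives $\theta(u,u)^n(x)$. Each factor $\theta(u,u)$ in position $k$ gives $\theta(u,u)^k\bigl(\theta(x,u)+\theta(u,x)\bigr)\theta(u,u)^{n-1-k}(u)$. Thus
$$F'(Z)=\theta(u,u)^{n}(x)+\sum_{k=0}^{n-1}\theta(u,u)^k\theta(u,x)\theta(u,u)^{n-1-k}(u)+\sum_{k=0}^{n-1}\theta(u,u)^{k}\theta(x,u)\theta(u,u)^{n-1-k}(u).$$
After reindexing the last sum by $k\mapsto n-1-k$, this is exactly the right-hand side of Corollary~\ref{c5.8} with $u=Zx+y$. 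Corollary~\ref{c5.8} is a polynomial identity valid for all $u$, so it holds over $\mathbf{K}[Z]$. Therefore
$$F'(Z)=(x,Zx+y,\ldots,Zx+y)_C=\sum_{i=1}^{p}is_i(x,y)Z^{i-1},$$
where the last equality is the definition of the $s_i$ in $(T,[\cdot,\cdot,\cdot]_C)$.

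Comparing this with $F'(Z)=\sum_{j=1}^{p}jc_jZ^{j-1}$ gives $jc_j=js_j(x,y)$ for $1\le j\le p-1$. Since each such $j$ is invertible in $\mathbf{K}$, we get $c_j=s_j(x,y)$. The top term $j=p$ carries a factor $p=0$ and imposes no condition, which is consistent with $c_p=x^{\left\{p\right\}}$ obtained directly. Setting $Z=1$ yields $(x+y)^{\left\{p\right\}}=y^{\left\{p\right\}}+\sum_{i=1}^{p-1}s_i(x,y)+x^{\left\{p\right\}}$.

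The step I expect to need the most care is matching the Leibniz expansion with Corollary~\ref{c5.8} term by term. One must check that the ordering of the $\theta$-factors, i.e. which power of $\theta(u,u)$ sits on each side of $\theta(x,u)$ and $\theta(u,x)$, agrees after reindexing. One must also confirm that both sums in the corollary appear with coefficient $+1$, as the reduction $\binom{p-1}{i}\equiv(-1)^i$ in its proof gives. A further point is that Corollary~\ref{c5.8} applies to the element $Zx+y$ of the scalar extension $T\otimes\mathbf{K}[Z]$. If one prefers to avoid scalar extension, one can replace $Z$ by arbitrary $\lambda\in\mathbf{K}$ and compare coefficients. That requires $\mathbf{K}$ to be large enough, so working formally over $\mathbf{K}[Z]$ is cleaner and is the route I will take.
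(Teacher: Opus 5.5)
Your proof is correct and matches the paper's argument: expand $(Zx+y)^{\{p\}}=\theta(u,u)^{\frac{p-1}{2}}(u)$ as a polynomial, differentiate by the Leibniz rule, identify the derivative with $(x,Zx+y,\ldots,Zx+y)_C$ via Corollary~\ref{c5.8}, and compare coefficients. You are in fact more careful than the paper about the formal variable and about inverting $j$ for $1\le j\le p-1$. One small tightening: Corollary~\ref{c5.8} applies to $u=Zx+y\in T\otimes_{\mathbf{K}}\mathbf{K}[Z]$ because its derivation uses only the multilinear pre-Lie triple system and representation identities, which survive scalar extension; it is not enough that the identity holds for every $u\in T$, since a pointwise polynomial identity need not be a formal one over a finite field.
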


\begin{proof}
    We adapt  Jacobson's identities proof from  associative algebras case. We write $Z = (\lambda x+y)^{\left\{p\right\}}= u^{\left\{p\right\}} $ as a polynomial on $\lambda $: $$Z = \lambda^p x^{\left\{p\right\}} + \sum\limits_{i=1}^{p-1}\lambda^i\mu_i(x,y)+ y^{\left\{p\right\}}, $$ with coefficients $\mu_i(x,y)\in T $. We can derive this equality with respect to $\lambda$ and then obtain the right hand side $$\frac{\partial Z}{\partial \lambda}= \sum\limits_{i=1}^{p-1}i \lambda^{i-1}\mu_i(x,y) $$ and for the left hand side, by Leibniz rule, $$\frac{\partial Z}{\partial \lambda}= \theta(u,u)^{\frac{p-1}{2}}(x)+ \sum\limits_{k=0}^{\frac{p-1}{2}-1}\theta(u,u)^k(\theta(u,x)+\theta(x,u))\theta(u,u)^{\frac{p-1}{2}-1-k}(u). $$ By \hyperref[c5.8]{Corollary~\ref*{c5.8}}, we have $$\frac{\partial Z}{\partial \lambda}= (x,u,\ldots,u)_C,$$ where $u$ occurs $p-1$ times. It shows that the $\mu_i(x,y) $ are exactly the $s_i(x,y)$, finishing  the proof.
\end{proof}

This motivates the following definition. We  define trivially restricted pre-Lie triple systems in a similar way as for pre-Lie algebras.

\begin{defi}
    Let $(T,\left\{ \cdot,\cdot,\cdot\right\}) $ be a pre-Lie triple system. We say that $(T,\left\{ \cdot,\cdot,\cdot\right\}) $ is trivially restricted if the following identities hold: \begin{enumerate}
        \item $\left\{ x,y^{\left\{p\right\}},z\right\}=\left\{\cdots \left\{\left\{\left\{ x,y,y\right\},y,y \right\}\cdots\right\},y,z\right\} $,
        \item $\left\{ x,y,z^{\left\{p\right\}} \right\}=\left\{\cdots \left\{\left\{\left\{ x,y,z\right\},z,z \right\}\cdots\right\},z,z\right\}, $
    \end{enumerate} for all $x,y,z\in T$, and where the $p$-map is defined according to Theorem \ref{T5.9}.
\end{defi}

\begin{ex}
    The pre-Lie triple system $A$ defined in  \hyperref[ex4.7]{Example~\ref*{ex4.7}} is trivially restricted. Indeed, any iteration of the bracket is zero. 
\end{ex}

\begin{prop}
    Let $(A,\circ)$ be a trivially restricted pre-Lie algebra. Then, $(A,\left\{ \cdot,\cdot,\cdot\right\})$ is a trivially restricted pre-Lie triple system.
\end{prop}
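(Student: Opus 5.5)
The plan is a direct computation. Both identities in the definition of a trivially restricted pre-Lie triple system reduce to the defining identity \eqref{eq3} of a trivially restricted pre-Lie algebra, once everything is written in terms of left multiplications. By the proposition of \cite{M21} recalled in Section \ref{s4}, $(A,\left\{\cdot,\cdot,\cdot\right\})$ with $\left\{x,y,z\right\}=z\circ(y\circ x)$ is already a pre-Lie triple system. So only the two restrictedness identities remain, with the $p$-map $y^{\left\{p\right\}}=\theta(y,y)^{\frac{p-1}{2}}(y)$ of Theorem \ref{T5.9}.

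\textbf{Step 1: identify the $p$-map.} Write $L_y$ for the left multiplication $z\mapsto y\circ z$. Then $\theta(y,y)(z)=\left\{z,y,y\right\}=y\circ(y\circ z)=L_y^2(z)$, so $\theta(y,y)=L_y^2$. Hence
\[
y^{\left\{p\right\}}=L_y^{p-1}(y)=y\circ(y\circ(\cdots(y\circ y)))=y^{\circ.p}.
\]
Thus the triple-system $p$-map coincides with the $p$-map of the trivially restricted pre-Lie algebra, and \eqref{eq3} says exactly that $L_{y^{\left\{p\right\}}}=L_y^{p}$.

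\textbf{Step 2: first identity.} The left-hand side is
\[
\left\{x,y^{\left\{p\right\}},z\right\}=z\circ\bigl(y^{\circ.p}\circ x\bigr)=z\circ L_y^{p}(x),
\]
where the second equality uses \eqref{eq3}. On the right-hand side, the nested bracket applies the operator $w\mapsto\left\{w,y,y\right\}=L_y^2(w)$ a total of $\frac{p-1}{2}$ times. I read the bracket pattern so that $y$ appears $p$ times in all, as in condition 3 of Definition \ref{D5.1}. This produces $L_y^{p-1}(x)$. The final bracket $\left\{w,y,z\right\}=z\circ(y\circ w)$ then gives $z\circ L_y^{p}(x)$, and the two sides agree.

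\textbf{Step 3: second identity.} The left-hand side is
\[
\left\{x,y,z^{\left\{p\right\}}\right\}=z^{\circ.p}\circ(y\circ x)=L_z^{p}(y\circ x),
\]
again by \eqref{eq3}. On the right-hand side, the innermost bracket is $\left\{x,y,z\right\}=L_z(y\circ x)$. Applying $w\mapsto\left\{w,z,z\right\}=L_z^2(w)$ a further $\frac{p-1}{2}$ times gives $L_z^{1+(p-1)}(y\circ x)=L_z^{p}(y\circ x)$, which matches.

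The only point I expect to need care is the bookkeeping of how many copies of $y$ (resp.\ $z$) occur in the iterated brackets of the definition. They must total $p$ copies, so that the exponents come out to exactly $L^{p}$ and \eqref{eq3} applies. Beyond that, no use of Proposition \ref{p5.7} or Corollary \ref{c5.8} is needed, since $\theta(y,y)=L_y^2$ linearizes everything.
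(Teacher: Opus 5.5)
Your proposal is correct and follows the paper's proof: the paper also first notes $y^{\circ.p}=y^{\left\{p\right\}}$, then unfolds $\left\{x,y,z\right\}=z\circ(y\circ x)$ on both sides and applies the trivially restricted pre-Lie identity to verify the two conditions. Your left-multiplication notation ($\theta(y,y)=L_y^2$) only makes the count of the $p$ copies of $y$ (resp.\ $z$) more explicit than the paper does.
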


\begin{proof}
We first notice that for $y\in A$, $y^{\circ.p}= y^{{\left\{p\right\}}}$. Let $x,y,z\in A $. $$\left\{ x,y^{\left\{p\right\}},z\right\}= z\circ(y^{\circ.p} \circ x)=z\circ (y\circ (y\circ (\cdots (y\circ x))))= \left\{\cdots \left\{\left\{\left\{ x,y,y\right\},y,y \right\}\cdots\right\},y,z\right\},   $$   
$$\left\{ x,y,z^{\left\{p\right\}} \right\}= z^{\circ.p}\circ(y\circ x)=(z\circ (z\circ (\cdots (z\circ(y\circ x)))))=\left\{\cdots \left\{\left\{\left\{ x,y,z\right\},z,z \right\}\cdots\right\},z,z\right\}. $$
\end{proof}

\begin{prop}\label{p5.12}\cite{BM26}
    Let $T$ be a Lie triple system and $\theta:T\times T\to End(V)$ be a representation of $T$. Then for all $x,y,z\in T $, we have \begin{center}
        $\theta([[\dots[y,x,x]\dots],x,x],z)= \theta(x,z)\sum\limits_{k=1}^n \binom{2n}{2k}\theta(x,x)^{k-1}\theta(y,x)\theta(x,x)^{n-k} -\theta(x,z) \sum\limits_{k=0}^{n-1}\binom{2n}{2k+1}\theta(x,x)^{n-k-1}\theta(x,y)\theta(x,x)^k + \theta(y,z)\theta(x,x)^n $,
    \end{center} where $x$ occurs $2n$ times in the bracket.
\end{prop}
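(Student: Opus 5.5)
Writing $y_n:=[[\dots[y,x,x]\dots],x,x]$ for the bracket in which $x$ occurs $2n$ times, we prove the formula by induction on $n$, using $y_{n}=[y_{n-1},x,x]$. For $n=1$ the claim reads
\[
\theta([y,x,x],z)=\theta(x,z)\theta(y,x)-\theta(x,z)\theta(x,y)+\theta(y,z)\theta(x,x).
\]
We obtain it from the first representation identity. That identity gives $\theta(x,[y,z,t])$, so we first need to move the bracket into the first slot. We take instead the second identity with the arguments renamed:
\[
\theta([a,b,c],t)=D(a,b)\theta(c,t)-\theta(c,t)D(a,b)-\theta(c,[a,b,t]).
\]
In the specialization $a=y$, $b=x$, $c=x$, $t=z$, the last term $\theta(x,[y,x,z])$ must then be expanded with the first identity. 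The resulting expression should collapse, after writing $D(y,x)=\theta(x,y)-\theta(y,x)$, to the three-term formula above. If this route becomes messy, we rely on the standard Lie triple system fact (cf. Yamaguti) that $\theta([a,b,c],d)=\theta(b,d)\theta(a,c)-\theta(a,d)\theta(b,c)+\ldots$, which is derivable from the two axioms. With $a=y$, $b=c=x$, this gives exactly $\theta(x,z)\theta(y,x)-\theta(y,z)\theta(x,x)\cdot(-1)\ldots$, and we match signs against the claimed $n=1$ case.

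For the inductive step, we apply the $n=1$ identity with $y$ replaced by $y_{n-1}$:
\[
\theta(y_n,z)=\theta(x,z)\theta(y_{n-1},x)-\theta(x,z)\theta(x,y_{n-1})+\theta(y_{n-1},z)\theta(x,x).
\]
The key point is that the first two terms involve $\theta(y_{n-1},x)$ and $\theta(x,y_{n-1})$, which are given in closed form by Proposition~\ref{p5.6} (with $2(n-1)$ copies of $x$). The third term is handled by the induction hypothesis, followed by right multiplication by $\theta(x,x)$.

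Substituting these, we obtain a sum of terms of two shapes, namely $\theta(x,z)\theta(x,x)^{a}\theta(y,x)\theta(x,x)^{b}$ and $\theta(x,z)\theta(x,x)^{a}\theta(x,y)\theta(x,x)^{b}$ with $a+b=n-1$, plus the single term $\theta(y,z)\theta(x,x)^{n}$. The final step is to collect coefficients and check them with Pascal's rule.
\begin{itemize}
\item For the $\theta(y,x)$ terms, the contributions come from $\binom{2n-2}{2k-2}$ (from Proposition~\ref{p5.6} via $\theta(y_{n-1},x)$), from $\binom{2n-2}{2k-1}$ (from $-\theta(x,y_{n-1})$), and from the shifted hypothesis $\binom{2n-2}{2k}$. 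These should combine into $\binom{2n}{2k}$ via $\binom{m}{j-2}+2\binom{m}{j-1}+\binom{m}{j}=\binom{m+2}{j}$, so one contribution must appear twice. We will track where the factor $2$ arises, and we expect this bookkeeping to be the main obstacle.
\item The $\theta(x,y)$ terms are treated analogously and should give $\binom{2n}{2k+1}$.
\end{itemize}
The index ranges, in particular the boundary terms $k=1$, $k=n$ and the absence of a $k=0$ term for $\theta(y,x)$, must be checked separately. This mirrors the reindexing already done in the proof of Proposition~\ref{p5.7}.
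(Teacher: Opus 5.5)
Your strategy is the right one: a one-step recursion for $\theta([w,x,x],z)$, applied with $w=y_{n-1}$ and fed with Proposition~\ref{p5.6} and the induction hypothesis. (The paper imports this statement and gives no proof of its own.) However, the base case you write down is false, and with it the induction cannot close.

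For $n=1$ the statement puts the coefficient $\binom{2}{1}=2$ on the middle term, and the axioms confirm this. The derivation has three steps:
\begin{enumerate}
\item The second identity with $(x,y,z,t)\mapsto(y,x,x,z)$ gives $\theta([y,x,x],z)=D(y,x)\theta(x,z)-\theta(x,z)D(y,x)-\theta(x,[y,x,z])$.
\item The first identity with $(x,y,z,t)\mapsto(x,y,x,z)$ gives $\theta(x,[y,x,z])=\theta(x,z)\theta(x,y)-\theta(y,z)\theta(x,x)+D(y,x)\theta(x,z)$.
\item On substituting, the $D(y,x)\theta(x,z)$ terms cancel, and $-\theta(x,z)D(y,x)=\theta(x,z)\theta(y,x)-\theta(x,z)\theta(x,y)$ supplies a second copy of $\theta(x,z)\theta(x,y)$.
\end{enumerate}
The result is
\[
\theta([y,x,x],z)=\theta(x,z)\theta(y,x)-2\,\theta(x,z)\theta(x,y)+\theta(y,z)\theta(x,x).
\]
In general, $\theta([a,b,c],d)=\theta(a,d)\theta(c,b)-\theta(b,d)\theta(c,a)-\theta(c,d)D(a,b)$. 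The ``Yamaguti'' form you quote has the arguments misplaced, and matching signs against the claimed case is not a derivation. A quick sanity check is the adjoint case of a Lie algebra, where $\theta(a,b)=\ad_b\ad_a$ and $\ad_{[[y,x],x]}=\ad_x^2\ad_y-2\ad_x\ad_y\ad_x+\ad_y\ad_x^2$.

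With your recursion (coefficient $-1$), the coefficient of $\theta(x,z)\theta(x,x)^{k-1}\theta(y,x)\theta(x,x)^{n-k}$ comes out as $\binom{2n-2}{2k-2}+\binom{2n-2}{2k-1}+\binom{2n-2}{2k}$. This is not $\binom{2n}{2k}$; for example, $4\neq 6$ at $n=2$, $k=1$. So the factor $2$ you defer to ``bookkeeping'' exists nowhere in your setup: it is exactly the one dropped from the base case. With the correct recursion
\[
\theta(y_n,z)=\theta(x,z)\theta(y_{n-1},x)-2\,\theta(x,z)\theta(x,y_{n-1})+\theta(y_{n-1},z)\theta(x,x),
\]
both families close.
\begin{itemize}
\item For $\theta(x,z)\theta(x,x)^{k-1}\theta(y,x)\theta(x,x)^{n-k}$ you get $\binom{2n-2}{2k-2}+2\binom{2n-2}{2k-1}+\binom{2n-2}{2k}=\binom{2n}{2k}$. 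The doubled term comes from the $\theta(y,x)$-part of $\theta(x,y_{n-1})$ in Proposition~\ref{p5.6}.
\item For $\theta(x,z)\theta(x,x)^{n-k-1}\theta(x,y)\theta(x,x)^{k}$ you get $-\bigl(\binom{2n-2}{2k-1}+2\binom{2n-2}{2k}+\binom{2n-2}{2k+1}\bigr)=-\binom{2n}{2k+1}$. Here the doubled term comes from the leading $\theta(x,y)$-part of $\theta(x,y_{n-1})$.
\item The term $\theta(y,z)\theta(x,x)^n$ arises only from the last summand.
\end{itemize}
With the convention $\binom{2n-2}{j}=0$ for $j<0$ or $j>2n-2$, the boundary indices need no separate treatment.
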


\begin{thm}\label{thm5.13}
    Let $(T,\left\{ \cdot,\cdot,\cdot\right\})$ be a trivially restricted pre-Lie triple system. Then, $(T,[\cdot,\cdot,\cdot]_C) $ is a restricted Lie triple system with $p$-map $x^{[p]}=x^{\left\{p\right\}} $. Moreover, the bilinear map $\theta(x,y)(z)= \left\{ z,x,y\right\} $ is a restricted representation of the restricted Lie triple system $(T,[\cdot,\cdot,\cdot]_C) $.
\end{thm}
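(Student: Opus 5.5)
Our strategy is to first establish the restricted representation property of $\theta$, and then deduce axiom 3 of \hyperref[D5.1]{Definition~\ref*{D5.1}} for $(T,[\cdot,\cdot,\cdot]_C)$ from it via the expression of $[\cdot,\cdot,\cdot]_C$ in terms of $\theta$. Axiom 1 ($p$-homogeneity) is immediate: $y^{\{p\}}=\theta(y,y)^{\frac{p-1}{2}}(y)$ is a homogeneous expression of degree $p$ in $y$. Axiom 2 (Jacobson's formula) is exactly \hyperref[T5.9]{Theorem~\ref*{T5.9}}.

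The two trivially-restricted identities are precisely the representation identities. In terms of $\theta$, the condition $\{x,y^{\{p\}},z\}=\{\cdots\{\{x,y,y\},y,y\}\cdots,y,z\}$ reads
\[
\theta(y^{\{p\}},z)=\theta(y,z)\circ\theta(y,y)^{\frac{p-1}{2}}.
\]
Here we count $p$ copies of $y$: the innermost bracket uses two, there are $\frac{p-3}{2}$ further brackets $\{\cdot,y,y\}$, and the final bracket $\{\cdot,y,z\}$ uses one more. Similarly, the second condition reads
\[
\theta(x,z^{\{p\}})=\theta(z,z)^{\frac{p-1}{2}}\circ\theta(x,z).
\]
These are the two defining identities of a restricted representation of $(T,[\cdot,\cdot,\cdot]_C,(\cdot)^{\{p\}})$, once we know the latter is restricted. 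We will check the exponent count carefully, since it is a minor but real step.

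For axiom 3 we need
\[
[x,y^{\{p\}},z]_C=(x,y,\ldots,y,z)_C .
\]
We expand the left side as
\[
[x,y^{\{p\}},z]_C=\theta(y^{\{p\}},z)(x)-\theta(x,z)(y^{\{p\}})+\theta(y^{\{p\}},x)(z)-\theta(x,y^{\{p\}})(z),
\]
and substitute the two identities above. This yields
\[
\theta(y,z)\theta(y,y)^{n}(x)-\theta(x,z)\theta(y,y)^{n}(y)+\theta(y,x)\theta(y,y)^n(z)-\theta(y,y)^n\theta(x,y)(z),
\]
with $n=\frac{p-1}{2}$. For the right side, we write
\[
(x,y,\ldots,y,z)_C=[w,y,z]_C,\qquad w=(x,y,\ldots,y)_C \ (p-1 \text{ copies of } y),
\]
and expand
\[
[w,y,z]_C=\theta(y,z)(w)-\theta(w,z)(y)+\theta(y,w)(z)-\theta(w,y)(z).
\]
We then insert \hyperref[c5.8]{Corollary~\ref*{c5.8}} for $w$. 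For $\theta(w,z)$ we use \hyperref[p5.12]{Proposition~\ref*{p5.12}}, and for $\theta(y,w)$ and $\theta(w,y)$ we use \hyperref[p5.6]{Proposition~\ref*{p5.6}}, all with $2n=p-1$. The binomial coefficients reduce mod $p$ via $\binom{p-1}{i}\equiv(-1)^i$.

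The main obstacle is this final comparison. After reduction mod $p$, the sums of the form $\theta(y,y)^k\theta(\cdot,\cdot)\theta(y,y)^{n-1-k}$ coming from $\theta(y,z)(w)$ and from $\theta(w,z)(y)$ must cancel in pairs. What should survive is exactly the four terms above, namely $\theta(y,z)\theta(y,y)^n(x)$ together with the leading terms of the \hyperref[p5.6]{Proposition~\ref*{p5.6}} expansions. Those expansions with alternating signs should collapse to $\theta(y,x)\theta(y,y)^n-\theta(y,y)^n\theta(x,y)$ applied to $z$. We would organize this bookkeeping by grouping terms according to the position of the single non-$\theta(y,y)$ factor.

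An alternative route, if the direct cancellation becomes unwieldy, is to argue as in \hyperref[thm5.7]{Theorem~\ref*{thm5.7}}: apply \hyperref[pro5.7]{Proposition~\ref*{pro5.7}} in the semidirect product $T\oplus T$ with the representation $\theta$, noting that $x\mapsto(x,x)$ relates $[\cdot,\cdot,\cdot]_C$ to this product. In either case, the restricted representation claim then follows from the trivially-restricted identities as in the second paragraph.
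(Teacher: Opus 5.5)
Your proposal is correct and follows the paper's proof step for step. Axioms 1 and 2 come from homogeneity and Theorem~\ref{T5.9}. For axiom 3, you expand $[x,y^{\{p\}},z]_C$ with the two trivially restricted identities, and $(x,y,\ldots,y,z)_C=[w,y,z]_C$ with Corollary~\ref{c5.8} and Propositions~\ref{p5.12} and~\ref{p5.6}. The pairwise cancellation you anticipate does occur after reindexing $k\mapsto n-1-k$, and both sides reduce to $\theta(y,z)\theta(y,y)^n(x)-\theta(x,z)\theta(y,y)^n(y)+\theta(y,x)\theta(y,y)^n(z)-\theta(y,y)^n\theta(x,y)(z)$. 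The representation property is then read off from the same two identities.

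Your fallback is also valid and a bit quicker. The diagonal $\{(x,x)\}$ is a subsystem of the semidirect product of $(T,[\cdot,\cdot,\cdot]_C)$ by $\theta$, isomorphic to $(T,[\cdot,\cdot,\cdot]_C)$. The particular case of Proposition~\ref{pro5.7}, which uses only the representation axioms, then gives the same four-term formula directly.
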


\begin{proof}
 Items $1.$ and $2.$ in \hyperref[D5.1]{Definition~\ref*{D5.1}} are fulfilled. Item $1.$ is clear and Item $2.$ derives from  \hyperref[T5.9]{Theorem~\ref*{T5.9}}. Item $3.$ remains to be verified. Since $T$ is trivially restricted so \begin{equation}\label{eq6} \left\{ x,y^{\left\{p\right\}},z\right\}=\left\{\cdots \left\{\left\{\left\{ x,y,y\right\},y,y \right\}\cdots\right\},y,z\right\} \end{equation} and \begin{equation}\label{eq7} \left\{ z,x,y^{\left\{p\right\}} \right\}=\left\{\cdots \left\{\left\{\left\{ z,x,y\right\},y,y \right\}\cdots\right\},y,y\right\} \end{equation} for all $x,y,z\in T$. 
     
     Let $x,y,z \in T$.
     By  \hyperref[c5.8]{Corollary~\ref*{c5.8}},  \hyperref[p5.12]{Proposition~\ref*{p5.12}} and  \hyperref[p5.6]{Proposition~\ref*{p5.6}}, we have $(x,y,\ldots,y,z)_C $, where $y$ occurs $p$ times given by
     \begin{align*} (x,y,&\ldots,y,z)_C= [(x,y,\ldots,y)_C,y,z]_C 
     \\ =&  \left\{(x,y,\ldots,y)_C,y,z \right\} - \left\{y,(x,y,\ldots,y)_C,z \right\} + \left\{z,y,(x,y,\ldots,y)_C \right\}- \left\{z,(x,y,\ldots,y)_C,y \right\}
     \\ =& \theta(y,z)((x,y,\ldots,y)_C) - \theta((x,y,\ldots,y)_C,z)(y) + \theta(y,(x,y,\ldots,y)_C)(z) - \theta((x,y,\ldots,y)_C,y)(z) 
     \\ =& \theta(y,z)(\theta(y,y)^{\frac{p-1}{2}}(x) + \sum\limits_{k=0}^{\frac{p-1}{2}-1}\theta(y,y)^k(\theta(y,x)+\theta(x,y))\theta(y,y)^{\frac{p-1}{2}-1-k}(y) )
     \\ & - \theta(y,z)\sum\limits_{k=1}^{\frac{p-1}{2}}\theta(y,y)^{k-1}\theta(x,y)\theta(y,y)^{\frac{p-1}{2}-k}(y) -\theta(y,z) \sum\limits_{k=0}^{\frac{p-1}{2}-1}\theta(y,y)^{\frac{p-1}{2}-k-1}\theta(y,x)\theta(y,y)^k(y)
     \\ &- \theta(x,z)\theta(y,y)^{\frac{p-1}{2}}(y) + \sum\limits_{k=0}^{\frac{p-1}{2}}\theta(y,y)^{\frac{p-1}{2}-k}\theta(y,x)\theta(y,y)^k(z) + \sum\limits_{k=0}^{\frac{p-1}{2}-1}\theta(y,y)^k\theta(x,y)\theta(y,y)^{\frac{p-1}{2}-k}(z)
     \\ & - \sum\limits_{k=0}^{\frac{p-1}{2}}\theta(y,y)^k\theta(x,y)\theta(y,y)^{\frac{p-1}{2}-k}(z) - \sum\limits_{k=0}^{\frac{p-1}{2}-1}\theta(y,y)^{\frac{p-1}{2}-k}\theta(y,x)\theta(y,y)^k(z)
     \\ = & \theta(y,z)\theta(y,y)^{\frac{p-1}{2}}(x) - \theta(x,z)\theta(y,y)^{\frac{p-1}{2}}(y) + \theta(y,x)\theta(y,y)^{\frac{p-1}{2}}(z) - \theta(y,y)^{\frac{p-1}{2}}\theta(x,y)(z).   \end{align*} In the other hand, because $T$ is trivially restricted, we have  \begin{align*}
         [x,y^{\left\{p\right\}},z]_C =& \left\{ x,y^{\left\{p\right\}},z\right\} -\left\{y^{\left\{p\right\}},x,z\right\}+\left\{z,y^{\left\{p\right\}},x\right\}-\left\{ z,x,y^{\left\{p\right\}}\right\}
          \\  = &  \left\{\cdots \left\{\left\{\left\{ x,y,y\right\},y,y \right\}\cdots\right\},y,z\right\} - \left\{y^{\left\{p\right\}},x,z\right\}
          \\& + \left\{\cdots \left\{\left\{\left\{ z,y,y\right\},y,y \right\}\cdots\right\},y,x\right\} - \left\{\cdots \left\{\left\{\left\{ z,x,y\right\},y,y \right\}\cdots\right\},y,y\right\}
          \\ = & \theta(y,z)\theta(y,y)^{\frac{p-1}{2}}(x) - \theta(x,z)\theta(y,y)^{\frac{p-1}{2}}(y) + \theta(y,x)\theta(y,y)^{\frac{p-1}{2}}(z) - \theta(y,y)^{\frac{p-1}{2}}\theta(x,y)(z) 
          \\ = & (x,y,\ldots,y,z)_C.
     \end{align*} The result follows. The fact that then the bilinear map $\theta$ is a restricted representation is an immediate consequence of the definition of  trivially restricted pre-Lie triple system.  Equations \eqref{eq6} and \eqref{eq7} mean that  \begin{center}
    $\theta(y^{[p]},z)=\theta(y,z) \circ \theta(y,y)^{\frac{p-1}{2}} $ 
    \end{center} and 
    \begin{center} $\theta(x,y^{[p]})= \theta(y,y)^{\frac{p-1}{2}}\circ\theta(x,y) $.
    \end{center} 
\end{proof}

Now, following Dokas, we provide here a more general definition of restricted pre-Lie triple systems.

\begin{defi}\label{D5.14}
    A restricted pre-Lie triple system is a pre-Lie triple system $T$ equipped with a $p$-map $(\cdot)^{[p]}:T\to T $ such that the following identities are satisfied: \begin{enumerate}
        \item $(\alpha x)^{[p]}= \alpha^px^{[p]} $,
        \item $\left\{ x,y^{[p]},z\right\}=\left\{\cdots \left\{\left\{\left\{ x,y,y\right\},y,y \right\}\cdots\right\},y,z\right\} $,
        \item $\left\{ x,y,z^{[p]} \right\}=\left\{\cdots \left\{\left\{\left\{ x,y,z\right\},z,z \right\}\cdots\right\},z,z\right\} $
        \item $\left\{ z,y^{[p]},x\right\}-\left\{z,x,y^{[p]} \right\}+\left\{ x,y^{[p]} ,z\right\}-\left\{y^{[p]},x,z \right\}= (x,y,\ldots,y,z)_C $,
        \item $(x+y)^{[p]}= x^{[p]}+y^{[p]}+ \sum\limits_{i=1}^{p-1} s_i(x,y) $,    
    \end{enumerate} for all $x,y,z\in T$, where $s_i(x,y)$ are the coefficients in the Lie triple system $(T,[\cdot,\cdot,\cdot]_C) $.
\end{defi}
Naturally, trivially restricted pre-Lie triple systems are restricted pre-Lie triple systems.
\begin{prop}
    Let $(T,\left\{ \cdot,\cdot,\cdot\right\})$ be a trivially restricted pre-Lie triple system. Then $(T,\left\{ \cdot,\cdot,\cdot\right\},(\cdot)^{\left\{p\right\}}) $ is a restricted pre-Lie triple system.
\end{prop}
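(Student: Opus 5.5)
The plan is to check the five axioms of Definition~\ref{D5.14} one at a time for the map $x\mapsto x^{\{p\}}=\theta(x,x)^{\frac{p-1}{2}}(x)$. Each axiom should follow either directly from a definition or from a result already proved above.

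Axiom 1 comes first. Put $\theta(\alpha x,\alpha x)=\alpha^2\theta(x,x)$ into the formula. This gives $(\alpha x)^{\{p\}}=\alpha^{p-1}\theta(x,x)^{\frac{p-1}{2}}(\alpha x)=\alpha^p x^{\{p\}}$, because $\theta$ is bilinear.

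Axioms 2 and 3 need no work. With $[p]=\{p\}$ they are exactly the two identities that define a trivially restricted pre-Lie triple system.

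Axiom 5 is Jacobson's formula $(x+y)^{\{p\}}=x^{\{p\}}+y^{\{p\}}+\sum_i s_i(x,y)$. Here the $s_i$ are computed in $(T,[\cdot,\cdot,\cdot]_C)$, and Theorem~\ref{T5.9} gives this formula for every pre-Lie triple system. So it holds in particular in our trivially restricted case.

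The only axiom that needs care is Axiom 4. Its left-hand side is
$$\{z,y^{[p]},x\}-\{z,x,y^{[p]}\}+\{x,y^{[p]},z\}-\{y^{[p]},x,z\},$$
and this is, term by term, the expansion
$$[x,y^{[p]},z]_C=\{x,y^{[p]},z\}-\{y^{[p]},x,z\}+\{z,y^{[p]},x\}-\{z,x,y^{[p]}\}$$
from Definition~\ref{d4.6}. So Axiom 4 reads $[x,y^{\{p\}},z]_C=(x,y,\ldots,y,z)_C$. This is item 3 of Definition~\ref{D5.1} for $(T,[\cdot,\cdot,\cdot]_C)$ with $p$-map $(\cdot)^{\{p\}}$, and Theorem~\ref{thm5.13} proves exactly that item for a trivially restricted pre-Lie triple system. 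I will also check that the number of copies of $y$ in $(x,y,\ldots,y,z)_C$ matches the $p$ copies used in Definition~\ref{D5.1}, so that the two statements really coincide.

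I do not expect a real obstacle. The one point to get right is the regrouping of the four brackets in Axiom 4 so that Theorem~\ref{thm5.13} applies directly.
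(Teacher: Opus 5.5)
Your proposal is correct and takes the same route as the paper, which simply cites Theorem~\ref{thm5.13}. That theorem's proof supplies homogeneity, Jacobson's formula (via Theorem~\ref{T5.9}) and the identity $[x,y^{\{p\}},z]_C=(x,y,\ldots,y,z)_C$ that is exactly Axiom 4, while Axioms 2 and 3 are the defining identities of trivial restrictedness; you have just written out these checks explicitly, including the correct regrouping of the four brackets into $[x,y^{\{p\}},z]_C$.
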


\begin{proof}
The result follows from \hyperref[thm5.13]{Theorem~\ref*{thm5.13}}.
\end{proof}

\begin{prop}
    Let $T$ be a restricted pre-Lie triple system. Then $(T,[\cdot,\cdot,\cdot])_C $ is a restricted Lie triple system with the same $p$-map. 
\end{prop}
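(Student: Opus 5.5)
We verify directly the three conditions of Definition~\ref{D5.1} for the Lie triple system $(T,[\cdot,\cdot,\cdot]_C)$ equipped with the given $p$-map $(\cdot)^{[p]}$. Recall that $(T,[\cdot,\cdot,\cdot]_C)$ is a Lie triple system by the result of \cite{M21} recalled in Section~\ref{s4}, so only the restricted axioms need checking. The quantities $s_i(x,y)$ and $(x,y,\ldots,y,z)_C$ appearing in Definition~\ref{D5.14} are, by hypothesis, computed in $(T,[\cdot,\cdot,\cdot]_C)$. Hence these are exactly the quantities appearing in Definition~\ref{D5.1} for this Lie triple system.

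Condition 1 of Definition~\ref{D5.1}, namely $(\alpha x)^{[p]}=\alpha^p x^{[p]}$, is item 1 of Definition~\ref{D5.14} verbatim. Condition 2, Jacobson's formula $(x+y)^{[p]}=x^{[p]}+y^{[p]}+\sum_{i=1}^{p-1}s_i(x,y)$, is item 5 of Definition~\ref{D5.14}, again with the same $s_i$.

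The only step requiring a small computation is condition 3, namely $[x,y^{[p]},z]_C=(x,y,\ldots,y,z)_C$ with $p$ copies of $y$. We expand the commutator bracket using its definition in Definition~\ref{d4.6}:
\begin{equation*}
[x,y^{[p]},z]_C=\left\{x,y^{[p]},z\right\}-\left\{y^{[p]},x,z\right\}+\left\{z,y^{[p]},x\right\}-\left\{z,x,y^{[p]}\right\}.
\end{equation*}
Up to reordering the four terms, this is exactly the left-hand side of item 4 of Definition~\ref{D5.14}, so it equals $(x,y,\ldots,y,z)_C$.

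The main point to be careful about is that item 4 is indeed stated with the same sign pattern and the same number ($p$) of occurrences of $y$ as in Definition~\ref{D5.1}. If a discrepancy in conventions appeared, I would instead reproduce the computation from the proof of Theorem~\ref{thm5.13}, using items 2 and 3 of Definition~\ref{D5.14} together with Corollary~\ref{c5.8}, Proposition~\ref{p5.6} and Proposition~\ref{p5.12}. That computation expresses both sides as
\begin{equation*}
\theta(y,z)\theta(y,y)^{\frac{p-1}{2}}(x)-\theta(x,z)\theta(y,y)^{\frac{p-1}{2}}(y)+\theta(y,x)\theta(y,y)^{\frac{p-1}{2}}(z)-\theta(y,y)^{\frac{p-1}{2}}\theta(x,y)(z),
\end{equation*}
with $\theta(y,y)^{\frac{p-1}{2}}(y)$ replaced by $y^{[p]}$ where needed. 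Since item 4 is given directly, this detour should not be needed, and the proof reduces to this identification of axioms.
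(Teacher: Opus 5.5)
Your proof is correct and is exactly the paper's argument (the paper only says ``It is straightforward''): items 1, 5 and 4 of Definition~\ref{D5.14} are conditions 1, 2 and 3 of Definition~\ref{D5.1} for $(T,[\cdot,\cdot,\cdot]_C)$, the last one after reordering the four terms of $[x,y^{[p]},z]_C$. One caution about your fallback: it would not work for an abstract $p$-map, because the term $\{y^{[p]},x,z\}=\theta(x,z)(y^{[p]})$ is not controlled by items 2 and 3 and need not equal $\theta(x,z)\theta(y,y)^{\frac{p-1}{2}}(y)$, which is precisely why item 4 has to be imposed as an axiom.
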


\begin{proof}
    It is straightforward.
\end{proof}
Next, we show that we can obtain restricted pre-Lie triple systems from restricted pre-Lie algebras.
\begin{prop}
    Let $(A,\circ,(\cdot)^{[p]})$ be a restricted pre-Lie algebra. Then $(A,\left\{ \cdot,\cdot,\cdot\right\},(\cdot)^{[p]} )$ is a restricted pre-Lie triple system.
\end{prop}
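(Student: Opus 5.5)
The plan is to check the five axioms of Definition~\ref{D5.14} one at a time for the bracket $\left\{x,y,z\right\}=z\circ(y\circ x)$, keeping the $p$-map of $A$. I will write $L_y(w)=y\circ w$ for left multiplication. First I will record that the Lie triple system $(A,[\cdot,\cdot,\cdot]_C)$ is exactly the one induced by the Lie algebra $(A,[\cdot,\cdot])$, where $[x,y]=x\circ y-y\circ x$; this is the commutativity of the diagram above, and I will reprove it. We have
$$[x,y,z]_C=z\circ(y\circ x)-z\circ(x\circ y)+x\circ(y\circ z)-y\circ(x\circ z).$$
The pre-Lie identity gives $x\circ(y\circ z)-y\circ(x\circ z)=[x,y]\circ z$, so $[x,y,z]_C=[x,y]\circ z-z\circ[x,y]=[[x,y],z]$. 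As noted at the start of Section~\ref{s5}, the coefficients $s_i(x,y)$ of $(A,[\cdot,\cdot,\cdot]_C)$ then coincide with those of the Lie algebra $(A,[\cdot,\cdot])$. Hence axiom 5 is exactly axiom 4 of Dokas' definition, and axiom 1 is immediate.

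Next I will treat axioms 2 and 3 by direct unwinding. The basic observation is $\left\{w,y,y\right\}=L_y^2(w)$, so $(p-1)/2$ iterations of $\left\{\cdot,y,y\right\}$ give $L_y^{p-1}$. For axiom 2, the right-hand side is $\left\{L_y^{p-1}x,y,z\right\}=z\circ L_y^{p}(x)$. Dokas' axiom 2 gives $y^{[p]}\circ x=L_y^p(x)$, so this equals $z\circ(y^{[p]}\circ x)=\left\{x,y^{[p]},z\right\}$. For axiom 3, the right-hand side is $L_z^{p-1}(z\circ(y\circ x))=L_z^p(y\circ x)$. Using Dokas' axiom 2 again, this is $z^{[p]}\circ(y\circ x)=\left\{x,y,z^{[p]}\right\}$.

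The only step requiring care is axiom 4. By the computation of $[\cdot,\cdot,\cdot]_C$ above, its left-hand side is $[x,y^{[p]},z]_C=-[[y^{[p]},x],z]$. Dokas' axioms 2 and 3 give
$$[y^{[p]},x]=y^{[p]}\circ x-x\circ y^{[p]}=L_y^p(x)-\bigl(L_y^p(x)-\ad_y^p(x)\bigr)=\ad_y^p(x).$$
On the other side, $(x,y,y)_C=[[x,y],y]=\ad_y^2(x)$. Iterating, $(x,y,\ldots,y)_C=\ad_y^{p-1}(x)$ with $y$ occurring $p-1$ times. Applying the final bracket gives
$$(x,y,\ldots,y,z)_C=[[\ad_y^{p-1}x,y],z]=-[\ad_y^p(x),z].$$
This matches the left-hand side, which finishes the verification. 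The main thing to watch is the sign bookkeeping in axiom 4 and the correct use of Dokas' axiom 3, which is stated with $x\circ y^{[p]}$ after swapping the roles of $x$ and $y$.
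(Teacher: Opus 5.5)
Your proof is correct and follows the same route as the paper. Axioms 1--3 come from unwinding $\{x,y,z\}=z\circ(y\circ x)$ with Dokas' axiom 2. Axioms 4--5 come from $[\cdot,\cdot,\cdot]_C=[[\cdot,\cdot],\cdot]$ (the commutative diagram) together with the restrictedness of the induced Lie algebra. You also spell out what the paper leaves implicit, namely the diagram's commutativity and the identity $[y^{[p]},x]=\ad_y^p(x)$ derived from Dokas' axioms 2 and 3.
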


\begin{proof}
    Let $(A,\circ) $ be a restricted pre-Lie algebra, then the induced Lie algebra is restricted. The first three Items of \hyperref[D5.14]{Definition~\ref*{D5.14}} of restricted pre-Lie triple systems follow immediately from  the definition of $\left\{ \cdot,\cdot,\cdot\right\} $ and the fact that $(A,\circ) $ is a restricted pre-Lie algebra. The last two Items follow from the fact that the induced Lie algebra is restricted so as the induced Lie triple system and the fact that the diagram is commutative.
\end{proof}

The diagram \[\begin{tikzcd}[row sep=large, column sep=large]
	{ \text{$p$-pre-Lie}} && \text{$p$-Lie} \\
	{\text{$p$-pre-LTS}} && \text{$p$-LTS}
	\arrow["{[x,y]=x\circ y - y\circ x}", from=1-1, to=1-3]
	\arrow["{\left\{ x,y,z\right\}= z\circ (y\circ x)}"', from=1-1, to=2-1]
	\arrow["{[x,y,z]=[[x,y],z]}", from=1-3, to=2-3]
	\arrow["{[x,y,x]_C}"', from=2-1, to=2-3]
\end{tikzcd}\] is commutative, where the $p$-map is the same for all structures.


\subsection{Restricted Rota-Baxter on restricted Lie triple systems and   restricted pre-Lie triple systems}

\begin{prop}\label{p5.20}
    Let $(T,[\cdot,\cdot,\cdot],(\cdot)^{[p]})$ be a restricted Lie triple system with a restricted Rota-Baxter operator $P$. We  define on $T$ a structure of restricted pre-Lie triple system by setting $$\left\{ x,y,z\right\}= [x,Py,Pz] $$ and $$x^{[p]'}= (x,Px,\ldots, Px). $$ Moreover, $P$ is then a restricted morphism of restricted Lie triple systems, $P: (T,[\cdot,\cdot,\cdot]_C,(\cdot)^{[p]'})\to (T,[\cdot,\cdot,\cdot],(\cdot)^{[p]}) $.
\end{prop}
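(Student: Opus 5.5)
The plan mirrors the proof of \hyperref[p3.13]{Proposition~\ref*{p3.13}}. We show that $(T,\left\{\cdot,\cdot,\cdot\right\})$ is a \emph{trivially} restricted pre-Lie triple system whose canonical $p$-map $x^{\left\{p\right\}}$ of \hyperref[T5.9]{Theorem~\ref*{T5.9}} coincides with $x^{[p]'}$. Then \hyperref[thm5.13]{Theorem~\ref*{thm5.13}} and the proposition stating that trivially restricted pre-Lie triple systems are restricted give the restricted pre-Lie triple system structure, including items 4 and 5 of \hyperref[D5.14]{Definition~\ref*{D5.14}}. That $\left\{x,y,z\right\}=[x,Py,Pz]$ is a pre-Lie triple system, and that $P:(T,[\cdot,\cdot,\cdot]_C)\to(T,[\cdot,\cdot,\cdot])$ is a morphism of Lie triple systems, is Mabrouk's result recalled above, so only the restricted parts need proof.

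\textbf{Step 1: identify the $p$-maps.} With $\theta(x,y)(z)=\left\{z,x,y\right\}=[z,Px,Py]$, one iteration gives $\theta(x,x)(w)=[w,Px,Px]$. Hence
$$x^{\left\{p\right\}}=\theta(x,x)^{\frac{p-1}{2}}(x)=(x,Px,\ldots,Px)=x^{[p]'},$$
with $Px$ occurring $p-1$ times.

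\textbf{Step 2: first trivially-restricted identity.} Since $P$ is a restricted Rota-Baxter operator, $P(y^{\left\{p\right\}})=(Py)^{[p]}$. Therefore
$$\left\{x,y^{\left\{p\right\}},z\right\}=[x,(Py)^{[p]},Pz]=(x,Py,\ldots,Py,Pz)$$
by item 3 of \hyperref[D5.1]{Definition~\ref*{D5.1}}, with $Py$ occurring $p$ times. Unwinding the iterated braces $\left\{\cdots\left\{\left\{x,y,y\right\},y,y\right\}\cdots,y,z\right\}$ gives exactly this same expression.

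\textbf{Step 3: second identity (the main obstacle).} We need $\left\{x,y,z^{\left\{p\right\}}\right\}=[x,Py,(Pz)^{[p]}]$, but the restricted axiom only controls the $p$-map in the \emph{middle} slot. I will handle this through the adjoint representation, which the paper states is restricted. Its defining identity reads $\theta_{ad}(b,c^{[p]})=\theta_{ad}(c,c)^{\frac{p-1}{2}}\theta_{ad}(b,c)$ with $\theta_{ad}(b,c)(a)=[a,b,c]$. This yields
$$[x,Py,(Pz)^{[p]}]=(x,Py,Pz,Pz,\ldots,Pz),$$
which is precisely $\left\{\cdots\left\{\left\{x,y,z\right\},z,z\right\}\cdots,z,z\right\}$. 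If one prefers to avoid that claim, a fallback is to apply the ternary Jacobi identity to move $(Pz)^{[p]}$ into the middle slot and then use item 3 twice. The resulting two iterated brackets must then be reconciled with the derivation axiom, and that bookkeeping is the delicate part.

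\textbf{Step 4: the morphism.} By Step 1 and \hyperref[d5.6]{Definition~\ref*{d5.6}},
$$P(x^{[p]'})=P((x,Px,\ldots,Px))=P(x)^{[p]},$$
so $P$ is a morphism of restricted Lie triple systems from $(T,[\cdot,\cdot,\cdot]_C,(\cdot)^{[p]'})$ to $(T,[\cdot,\cdot,\cdot],(\cdot)^{[p]})$.
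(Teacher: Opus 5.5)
Your proposal is correct and follows the paper's proof essentially step for step. You identify $x^{[p]'}$ with $x^{\{p\}}$, use $P(y^{\{p\}})=P(y)^{[p]}$ to verify the two trivially-restricted identities, conclude via Theorem~\ref{thm5.13}, and in Step 4 spell out the morphism property that the paper leaves implicit. In Step 3 you correctly flag that $[x,Py,(Pz)^{[p]}]=(x,Py,Pz,\ldots,Pz)$ is not item 3 of Definition~\ref{D5.1} but the restrictedness of the adjoint representation, which the paper uses without comment; your fallback (cyclic identity, item 3 twice, then the fact that $\ad_{y}^{p}$ is a derivation in the standard embedding) does justify it.
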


\begin{proof}
    First, we notice that $y^{[p]'}=y^{\left\{p\right\}} $.  So Jacobson identities are already satisfied. Let $x,y,z\in T$, we have \begin{align*}
        \left\{ x,y^{\left\{p\right\}},z\right\}=& [x,P(y^{\left\{p\right\}}),P(z)] \\ =&[x,P(y)^{[p]},P(z)]\\ =& (x,Py,\ldots,Py,Pz) \\ = &\left\{\cdots \left\{\left\{\left\{ x,y,y\right\},y,y \right\}\cdots\right\},y,z\right\}. \end{align*} Similarly, \begin{align*}
            \left\{ x,y,z^{\left\{p\right\}}\right\}=& [x,P(y),P(z^{\left\{p\right\}})] \\ =&[x,P(y),P(z)^{[p]}]\\ =& (x,Py,Pz,\ldots,Pz) \\ = &\left\{\cdots \left\{\left\{\left\{ x,y,z\right\},z,z \right\}\cdots\right\},z,z\right\}. \end{align*}
        Thus, $(T,\left\{\cdot,\cdot,\cdot \right\})$ is a trivially restricted pre-Lie triple system and the result follows.
\end{proof}

We have again the same situation as in the non restricted case. From a restricted Lie triple system $(T,[\cdot,\cdot,\cdot])$, with a restricted Rota-Baxter operator, we define a restricted pre-Lie triple system $(T,\left\{ \cdot,\cdot,\cdot\right\}) $, and in addition the map $P$ is a restricted morphism of restricted Lie triple systems $P:(T,[\cdot,\cdot,\cdot]_C)\to (T,[\cdot,\cdot,\cdot]) $.
We summarize the results in the following diagram. 
\[\begin{tikzcd}[row sep=large, column sep=large]
	{(T,[\cdot,\cdot,\cdot],(\cdot)^{[p]})} & {(T,\left\{\cdot,\cdot,\cdot \right\},(\cdot)^{[p]')}} \\
	& {(L,[\cdot,\cdot,\cdot]_C,(\cdot)^{[p]'})}
	\arrow["{[x,Py,Pz]}", from=1-1, to=1-2]
	\arrow[from=1-2, to=2-2]
	\arrow["P", from=2-2, to=1-1]
\end{tikzcd}\]

\section{Relative Rota-Baxter operators}\label{s6}
We aim in this section to consider a more general situation for restricted Rota-Baxter operators. We extend to restricted case, the notion of Relative Rota-Baxter operator, which are also called $\mathcal{O}$-operators and dealing with an algebra and a representation. We also show some relevant results. 
We  adapt the definition of  restricted Rota-Baxter operators in restricted Lie algebras and Lie triple systems in the case of relative Rota-Baxter with respect to restricted representations. Then \hyperref[d5.6]{Definition~\ref*{d5.6}}, \hyperref[p5.8]{Proposition~\ref*{p5.8}} and \hyperref[p5.20]{Proposition~\ref*{p5.20}} are generalized as follows.

\begin{defi}
    Let $T$ be a Lie triple system and $(V,\theta)$ be a representation of $T$. A linear map $P:V\to T $ is called a relative Rota-Baxter operator with respect to a representation $(V,\theta)$ if \begin{center}
        $[P u,P v,P w]=P (D(P u,P v)(w)+\theta(P v,P w)(u)- \theta(P u,P w)(v)) $ for all $u,v,w\in V$.
    \end{center}
\end{defi}

\begin{ex}
    Let $T$ be a Lie triple system. Then a relative Rota-Baxter operator with respect to the adjoint representation of $T$ is a Rota-Baxter operator of weight $0$ on $T$.
\end{ex}

\begin{prop}
    Let $T$ be a Lie triple system and $(V,\theta)$ be a representation of $T$. A linear map $P:V\to T$ is a relative Rota Baxter operator on $T$ with respect to $(V,\theta) $ if and only if its graph $Gr(P)= \left\{ (P u,u), u\in V\right\} $ is a subsystem of $T\oplus V $.
\end{prop}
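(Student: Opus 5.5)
The claim is that $P$ is a relative Rota-Baxter operator exactly when $Gr(P)$ is a subsystem of the semi-direct product $T\oplus V$. My plan is to compute the ternary bracket of three elements of $Gr(P)$ in $T\oplus V$, using the semi-direct product bracket
$$[(x,a),(y,b),(z,c)]_V=([x,y,z],\theta(y,z)(a)-\theta(x,z)(b)+D(x,y)(c)).$$
I then compare the result with the defining identity of a relative Rota-Baxter operator.

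For $u,v,w\in V$, substituting $x=Pu$, $y=Pv$, $z=Pw$ and $a=u$, $b=v$, $c=w$ gives
$$[(Pu,u),(Pv,v),(Pw,w)]_V=\bigl([Pu,Pv,Pw],\ \theta(Pv,Pw)(u)-\theta(Pu,Pw)(v)+D(Pu,Pv)(w)\bigr).$$
Call the second component $\xi(u,v,w)\in V$. An element $(t,\xi)\in T\oplus V$ lies in $Gr(P)$ if and only if $t=P\xi$, because the $V$-coordinate determines the only candidate preimage. So the bracket above lies in $Gr(P)$ if and only if $[Pu,Pv,Pw]=P(\xi(u,v,w))$. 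This is precisely the relative Rota-Baxter identity: its right-hand side is $P$ applied to $D(Pu,Pv)(w)+\theta(Pv,Pw)(u)-\theta(Pu,Pw)(v)=\xi(u,v,w)$.

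Both directions now follow. If $P$ is a relative Rota-Baxter operator, every bracket of three graph elements lies in the graph. Since $Gr(P)$ is a linear subspace (as $P$ is linear) and every element of $Gr(P)$ has the form $(Pu,u)$, this shows $[Gr(P),Gr(P),Gr(P)]\subset Gr(P)$, so $Gr(P)$ is a subsystem. Conversely, if $Gr(P)$ is a subsystem, the bracket lies in $Gr(P)$ for all $u,v,w$, and the characterization of membership above yields the identity.

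There is no genuine obstacle here. The one point needing care is matching the ordering and signs of the $\theta$ and $D$ terms between the semi-direct product bracket and the operator identity, so that the $V$-component is exactly the argument of $P$. Specialising $V=T$ with the adjoint representation recovers the earlier non-relative statement, which serves as a consistency check.
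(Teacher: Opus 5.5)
Your proof is correct: the $V$-component of $[(Pu,u),(Pv,v),(Pw,w)]_V$ is exactly the argument of $P$ in the relative Rota-Baxter identity, and an element lies in $Gr(P)$ exactly when its $T$-component is $P$ of its $V$-component. The paper states this proposition without proof, and your direct computation is the standard verification it takes for granted, the same one behind the adjoint-case graph characterization in Section 4.
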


\begin{prop}[\cite{M21}]
    Let $(A,[\cdot,\cdot,\cdot]) $ be a Lie triple system and $(V,\theta) $ a representation. Suppose that the linear map $P:V\to A$ is a relative Rota-Baxter operator with respect  to $(V,\theta)$. Then, there exists a pre-Lie triple system structure on $V$ given by \begin{center}
        $\left\{ u,v,w\right\} = \theta(Pv,Pw)(u)$, $\forall u,v,w\in V $.
    \end{center} 
\end{prop}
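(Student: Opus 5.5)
The plan is to reduce both defining identities \eqref{equ7} and \eqref{equ8} of a pre-Lie triple system to the two axioms of a representation of $T$. The bridge between the ternary product on $V$ and the bracket on $T$ is the identity $P([u,v,w]_C)=[Pu,Pv,Pw]$.

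\emph{Step 1: compute $[\cdot,\cdot,\cdot]_C$ on $V$.} Set $\{u,v,w\}=\theta(Pv,Pw)(u)$. By the formula in \hyperref[d4.6]{Definition~\ref*{d4.6}},
\begin{align*}
[u,v,w]_C&=\{u,v,w\}-\{v,u,w\}+\{w,v,u\}-\{w,u,v\}\\
&=\theta(Pv,Pw)(u)-\theta(Pu,Pw)(v)+\big(\theta(Pv,Pu)-\theta(Pu,Pv)\big)(w)\\
&=\theta(Pv,Pw)(u)-\theta(Pu,Pw)(v)+D(Pu,Pv)(w).
\end{align*}
This is exactly the argument of $P$ in the relative Rota-Baxter identity. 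Hence $P([u,v,w]_C)=[Pu,Pv,Pw]$ for all $u,v,w\in V$. In the same way,
$$\{x,y,z\}^{\wedge}=\{z,y,x\}-\{z,x,y\}=\big(\theta(Py,Px)-\theta(Px,Py)\big)(z)=D(Px,Py)(z).$$

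\emph{Step 2: rewrite \eqref{equ7}.} Write $a_i=Px_i$. Using Step 1, each term of \eqref{equ7} becomes a composite of operators $\theta(a_i,a_j)$ and $D(a_i,a_j)$ applied to $x_5$. For instance,
$$\{x_5,x_1,[x_2,x_3,x_4]_C\}=\theta(a_1,[a_2,a_3,a_4])(x_5),$$
and
$$\{x_2,x_3,\{x_5,x_1,x_4\}\}^{\wedge}=D(a_2,a_3)\,\theta(a_1,a_4)(x_5).$$
With these substitutions, \eqref{equ7} should become precisely axiom 1 of the representation $(V,\theta)$, evaluated at $(a_1,a_2,a_3,a_4)$ and applied to $x_5$.

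\emph{Step 3: rewrite \eqref{equ8}.} The same substitution turns \eqref{equ8} into a relation among $D(a_2,a_3)\theta(a_1,a_4)$, $\theta(a_3,a_4)D(a_1,a_2)$, $\theta([a_1,a_2,a_3],a_4)$ and $\theta(a_3,[a_1,a_2,a_4])$, applied to $x_5$. This should be axiom 2, with $\theta$ and $D$ exchanged as needed via $D(a,b)=\theta(b,a)-\theta(a,b)$. I will also use that $D(a_1,a_2)$ and $\theta(a_3,a_4)$ enter axiom 2 as a commutator.

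\emph{Expected obstacle.} The conceptual reduction is immediate. The real work is the sign and index bookkeeping needed to match each rewritten identity exactly with axiom 1 or axiom 2, in particular the correct ordering of composites and the placement of the $D$-terms.

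If a direct match fails, I would fall back on a structural argument. The graph $Gr(P)$ is a subsystem of $T\oplus V$, so $(V,[\cdot,\cdot,\cdot]_C)\cong Gr(P)$ is a Lie triple system. The map $(u,v)\mapsto\theta(Pu,Pv)$ is the pullback of the representation $\theta$ along the morphism $P$, and is therefore a representation of $(V,[\cdot,\cdot,\cdot]_C)$. The proof of the preceding proposition on pre-Lie triple systems shows that \eqref{equ7} and \eqref{equ8} are exactly the representation axioms for this pullback. That yields the result.
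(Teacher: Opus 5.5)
Your Steps 1 and 2 are correct. You have $P([u,v,w]_C)=[Pu,Pv,Pw]$ and $\{x,y,z\}^{\wedge}=D(Px,Py)(z)$, and \eqref{equ7} becomes exactly axiom 1 of the representation at $(a_1,a_2,a_3,a_4)$, applied to $x_5$.

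\textbf{The gap is in Step 3, and it is not sign bookkeeping.} With \eqref{equ8} as printed, the left side is $D(a_2,a_3)\theta(a_1,a_4)(x_5)$. By axiom 2 at $(a_1,a_2,a_3,a_4)$, the right side is the operator
\[
\theta(a_3,a_4)D(a_1,a_2)+\theta([a_1,a_2,a_3],a_4)+\theta(a_3,[a_1,a_2,a_4])=D(a_1,a_2)\theta(a_3,a_4)
\]
applied to $x_5$. So the printed \eqref{equ8} is equivalent to $D(a_2,a_3)\theta(a_1,a_4)=D(a_1,a_2)\theta(a_3,a_4)$ on $V$. No rewriting of $D$ through $\theta$ produces this, and it fails in general.

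Here is a counterexample. Take $T=V=M_2(\mathbf{K})$ with $[x,y,z]=[[x,y],z]$, $\theta(x,y)(v)=yxv$ (induced by left multiplication), and $P=\mathrm{id}$. This $P$ is a relative Rota-Baxter operator, since $[u,v]w+wvu-wuv=[[u,v],w]$. Then $\{u,v,w\}=wvu$ and $\{x,y,z\}^{\wedge}=[x,y]z$. Choose the matrix units $x_1=E_{12}$, $x_2=x_3=E_{11}$, $x_4=E_{21}$, and $x_5=I$. The printed left side is $[x_2,x_3]x_4x_1x_5=0$, while the right side is $[x_1,x_2]x_4x_3x_5=-E_{11}$.

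Your fallback does not rescue this. It relies on the same assertion that \eqref{equ8} is ``exactly'' the second representation axiom, and that assertion fails for the printed identity.

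\textbf{The missing observation is that the left side of \eqref{equ8} is misprinted.} It should be $\{x_1,x_2,\{x_5,x_3,x_4\}\}^{\wedge}$. This is what the paper's own computation of $\mathcal{A}Sp_{\{3\}}(r_2)$ gives after renaming $x_3\mapsto x_5$, $x_4\mapsto x_3$, $x_5\mapsto x_4$. It is also what axiom 2 says for $\theta(x,y)(z)=\{z,x,y\}$.

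With this correction the left side is $D(a_1,a_2)\theta(a_3,a_4)(x_5)$. Then \eqref{equ8} is exactly axiom 2 at $(a_1,a_2,a_3,a_4)$ applied to $x_5$, and your argument is complete. Equivalently, your pullback of $\theta$ along the morphism $P\colon(V,[\cdot,\cdot,\cdot]_C)\to T$ now works.

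The paper gives no proof of this statement, only the reference \cite{M21}. With the corrected identity, your route is the standard one.
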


In the sequel, we define restricted relative Rota-Baxter operators on a restricted Lie triple system, we want to keep previous proposition true in the restricted case.

\begin{defi}
    Let $T$ be a restricted Lie triple system and $(V,\theta) $ be a restricted representation of $T$. A linear map $P:V\to T $ is called a restricted relative Rota-Baxter operator on $T$ if it is a relative Rota-Baxter operator of the Lie triple system $T$ and if, in addition, \begin{center}
        $P (u)^{[p]}=P(\theta(P u,P u)^{\frac{p-1}{2}}(u)) $ for all $u\in V$.
    \end{center} 
\end{defi}

\begin{prop}
    Let $T$ be a restricted Lie triple system and $(V,\theta)$ be a restricted representation of $T$. A linear map $P:V\to T$ is a restricted relative Rota-Baxter operator on $T$ with respect to $(V,\theta) $ if and only if its graph $Gr(P)= \left\{ (P u,u), u\in V\right\} $ is a restricted subsystem of $T\oplus V $, where the semi-direct product $T\oplus V $ has a $p$-map defined in  \hyperref[thm5.7]{Theorem~\ref*{thm5.7}} for any basis of $T\oplus V $.
\end{prop}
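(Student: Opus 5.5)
The proof will follow the pattern of Proposition \ref{p3.10} and Proposition \ref{p5.8}. By the non-restricted proposition above, $P$ is a relative Rota-Baxter operator if and only if $Gr(P)$ is a subsystem of $T\oplus V$. So it only remains to show that, for a relative Rota-Baxter operator $P$, the graph $Gr(P)$ is closed under the $p$-map of Theorem \ref{thm5.7} exactly when $P(u)^{[p]}=P(\theta(Pu,Pu)^{\frac{p-1}{2}}(u))$ for all $u\in V$.

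First I would record a general fact. Let $H$ be a subsystem of a restricted Lie triple system and let $(h_i)$ be a basis of $H$. Then $H$ is closed under the $p$-map as soon as $h_i^{[p]}\in H$ for all $i$. This follows from items 1 and 2 of Definition \ref{D5.1}. Indeed, the $s_i(x,y)$ are iterated brackets $(x,\dots)$ of elements of $H$, so they lie in $H$. Expanding $(\sum\alpha_ih_i)^{[p]}$ by repeated use of the sum formula and of $p$-semilinearity then keeps everything inside $H$.

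For the direct implication, assume $P$ is restricted. Take a basis $(u_i)$ of $V$. The elements $(Pu_i,u_i)$ are linearly independent, because their second components are. Extend them to a basis of $T\oplus V$. The $p$-map that Theorem \ref{thm5.7} attaches to this basis satisfies
\[(Pu_i,u_i)^{[p]}=\bigl((Pu_i)^{[p]},\theta(Pu_i,Pu_i)^{\frac{p-1}{2}}(u_i)\bigr).\]
By the restricted condition this equals $\bigl(P(w_i),w_i\bigr)$ with $w_i=\theta(Pu_i,Pu_i)^{\frac{p-1}{2}}(u_i)$, so it lies in $Gr(P)$. The general fact then gives closedness of $Gr(P)$ under this $p$-map.

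For the statement ``for any basis'', I would use the remark after Theorem \ref{thm5.7}. For every $(x,v)$ one has
\[[\,\cdot\,,(x^{[p]},\theta(x,x)^{\frac{p-1}{2}}(v)),\,\cdot\,]=(\cdot,(x,v),\dots,(x,v),\cdot).\]
Hence any two $p$-maps obtained from different bases differ by a central element. The main obstacle I anticipate is exactly this issue of dependence on the basis. Following the paper's convention in Propositions \ref{p3.10} and \ref{p5.8}, I would read the statement as saying that $Gr(P)$ is restricted for the $p$-maps built from bases that extend a basis of the graph, or for every basis in the centerless case, where the $p$-map is unique.

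For the converse, assume $Gr(P)$ is stable under the $p$-map coming from any basis. Fix $u\neq 0$ in $V$. Then $(Pu,u)\neq 0$, so it extends to a basis of $T\oplus V$. The associated $p$-map sends $(Pu,u)$ to $\bigl((Pu)^{[p]},\theta(Pu,Pu)^{\frac{p-1}{2}}(u)\bigr)$. Membership in $Gr(P)$ forces the first component to be $P$ of the second, that is, $P(u)^{[p]}=P(\theta(Pu,Pu)^{\frac{p-1}{2}}(u))$. The case $u=0$ is trivial, since both sides vanish by item 1 of Definition \ref{D5.1}. This proves that $P$ is restricted.
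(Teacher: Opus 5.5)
Your proposal is correct and follows the paper's proof essentially step for step. You reduce to closedness of $Gr(P)$ under the $p$-map, check it on a basis $(Pu_i,u_i)$ of $Gr(P)$ extended to a basis of $T\oplus V$, and for the converse extend a single $(Pu,u)$ to a basis of $T\oplus V$; your additions (why checking on a basis suffices via items 1--2 of Definition~\ref{D5.1}, the case $u=0$, and the observation that $p$-maps from different bases differ only by central terms) just make explicit what the paper leaves implicit.
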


\begin{proof}
    Let $(Pu_i,u_i) $ be a basis of $Gr(P)$. We can complete this basis into a basis $(x_i,u_i) $ of $T\oplus V $. We have a $p$-map on $T\oplus V$ defined on the basis by $(x_i,u_i)^{[p]}=(x_i^{[p]},\theta(x_i,x_i)^{\frac{p-1}{2}}(u_i)) $. The graph  $Gr(P) $ is a subsystem of $T\oplus V$. Then it is sufficient  to show that $Gr(P)$ is stable under the $p$-map on $T\oplus V$. For that, it is enough to show that for $(Px_i,x_i) $ elements of the basis of $Gr(P)$, $(Pu_i,u_i)^{[p]}=(P(u_i)^{[p]}, \theta(Pu_i,Pu_i)^{\frac{p-1}{2}}(u_i))  \in Gr(P) $. It is the case from the definition of a restricted relative Rota-Baxter operator. Conversely, assume that $Gr(P) $ is closed under any $p$-map defined with a basis $(x_i,u_i) $ of $T\oplus V $. Let $u\in V$. We can complete $(Pu,u) $ into a basis of $T\oplus V $. Then there exists a $p$-map defined on $(Pu,u) $ by $(Pu,u)^{[p]}=(P(u)^{[p]}, \theta(Pu,Pu)^{\frac{p-1}{2}}(u))  $. Also $Gr(P)$ is closed under $(\cdot)^{[p]} $, so $P(u)^{[p]}=P(\theta(Pu,Pu)^{\frac{p-1}{2}}(u)) $ and $P$ is a restricted relative Rota-Baxter operator.
\end{proof}

Since $V$ and $Gr(P)$ are isomorphic, we can define on $V$ a structure of restricted triple system by setting $[u,v,w]=D(P u,P v)(w)+\theta(P v,P w)(u)- \theta(P u,P w)(v) $ and $u^{[p]}= \theta(P u,P u)^{\frac{p-1}{2}}(u) $. In this case, $P:V\to T $ is a morphism of restricted Lie triple systems.

Finally, we have the following result.
\begin{prop}
    Let $(T,[\cdot,\cdot,\cdot], (\cdot)^{[p]})$ be a restricted Lie triple system with a restricted representation $(V,\theta) $ and a restricted relative Rota-Baxter operator $P$ with respect to $(V,\theta)$. We  define on $V$ a structure of restricted pre-Lie triple system by defining for all $u,v,w\in V$ $$\left\{ u,v,w\right\}= \theta(Pv,Pw)(u) $$ and $$u^{[p]'}= \theta(Pu,Pu)^{\frac{p-1}{2}}(u) . $$ Moreover, $P$ is then a restricted morphism of restricted Lie triple systems $$P: (V,[\cdot,\cdot,\cdot]_C,(\cdot)^{[p]'})\to (T,[\cdot,\cdot,\cdot],(\cdot)^{[p]}). $$
\end{prop}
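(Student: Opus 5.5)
The plan mirrors the proof of Proposition~\ref{p5.20}: show that $(V,\{\cdot,\cdot,\cdot\})$ is a \emph{trivially} restricted pre-Lie triple system, then invoke Theorem~\ref{thm5.13} and the subsequent proposition to obtain a restricted pre-Lie triple system with $p$-map $(\cdot)^{\{p\}}$.

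First, by the proposition of \cite{M21} recalled above, $\{u,v,w\}=\theta(Pv,Pw)(u)$ is a pre-Lie triple system on $V$. Its associated representation is $\theta_V(v,w)(u)=\{u,v,w\}=\theta(Pv,Pw)(u)$, so $\theta_V(v,w)=\theta(Pv,Pw)$ as endomorphisms of $V$. In particular $u^{\{p\}}=\theta_V(u,u)^{\frac{p-1}{2}}(u)=\theta(Pu,Pu)^{\frac{p-1}{2}}(u)=u^{[p]'}$. Theorem~\ref{T5.9} then gives the Jacobson formula for $(\cdot)^{[p]'}$ with respect to $[\cdot,\cdot,\cdot]_C$, and $p$-semilinearity is clear.

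Next I verify the two trivial-restrictedness identities using the restricted conditions on $P$ and on $\theta$. For $u,v,w\in V$,
\[
\{u,v^{\{p\}},w\}=\theta(P(v^{[p]'}),Pw)(u)=\theta(P(v)^{[p]},Pw)(u)=\theta(Pv,Pw)\theta(Pv,Pv)^{\frac{p-1}{2}}(u),
\]
where the last step uses the first axiom of a restricted representation. The right-hand side is exactly the iterated bracket $\{\cdots\{\{u,v,v\},v,v\}\cdots,v,w\}$, since each $\{-,v,v\}$ applies $\theta(Pv,Pv)$. Similarly,
\[
\{u,v,w^{\{p\}}\}=\theta(Pv,P(w)^{[p]})(u)=\theta(Pw,Pw)^{\frac{p-1}{2}}\theta(Pv,Pw)(u),
\]
by the second axiom, which equals $\{\cdots\{\{u,v,w\},w,w\}\cdots,w,w\}$. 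Hence $V$ is trivially restricted, and by Theorem~\ref{thm5.13} and the proposition following Definition~\ref{D5.14}, $(V,\{\cdot,\cdot,\cdot\},(\cdot)^{[p]'})$ is a restricted pre-Lie triple system.

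For the morphism claim, I would expand $[u,v,w]_C$ in terms of $\theta$ and check that it coincides with $D(Pu,Pv)(w)+\theta(Pv,Pw)(u)-\theta(Pu,Pw)(v)$. The relative Rota-Baxter identity then gives $P[u,v,w]_C=[Pu,Pv,Pw]$, and the restricted condition gives $P(u^{[p]'})=P(u)^{[p]}$. The only real care point is matching the argument order in $\theta$ with the iterated brackets, and in particular applying the two restricted-representation identities with the correct composition order. I expect no genuine obstacle.
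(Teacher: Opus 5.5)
Your proposal is correct and follows essentially the same route as the paper. Like the paper, you identify $u^{[p]'}$ with $u^{\{p\}}$, verify the two trivial-restrictedness identities using the restricted relative Rota--Baxter condition and the two restricted-representation axioms, and then conclude via Theorem~\ref{thm5.13}. You also make the morphism claim explicit, by writing $[u,v,w]_C=D(Pu,Pv)(w)+\theta(Pv,Pw)(u)-\theta(Pu,Pw)(v)$ and noting $P(u^{[p]'})=P(u)^{[p]}$; the paper leaves this implicit in ``the result follows.''
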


\begin{proof}
    First, we notice that $u^{[p]'}=u^{\left\{p\right\}} $. So  Jacobson identities are already verified. Let $u,v,w\in V$, \begin{align*}
        \left\{ u,v^{\left\{p\right\}},w\right\}=& \theta(P(v^{\left\{p \right\}}),Pw)(u) \\ =&\theta(P(v)^{[p]},Pw)(u)\\ =& \theta(Pv,Pw)\circ \theta(Pv,Pv)^{\frac{p-1}{2}}(u) \\ = &\left\{\cdots \left\{\left\{\left\{ u,v,v\right\},v,v \right\}\cdots\right\},v,w\right\}. \end{align*} Similarly, \begin{align*}
            \left\{ u,v,w^{\left\{p\right\}}\right\}=& \theta(P(v),P(w^{\left\{p\right\}})(u) \\ =&\theta(P(v),P(w)^{[p]})(u)\\ =& \theta(Pw,Pw)^{\frac{p-1}{2}}\circ\theta(Pv,Pw)(u) \\ = &\left\{\cdots \left\{\left\{\left\{ u,v,w\right\},w,w \right\}\cdots\right\},w,w\right\}. \end{align*}
        The result follows.
\end{proof}


\end{document}